\documentclass[oneside, 11pt]{article}
\usepackage{commands}

\begin{document}
  \title{
    Galois Groups of Trinomials with Small Coefficients
  }
  \author{Hilel Garmi}
  \date{}
  \maketitle

  \begin{abstract}
    We calculate the Galois group of $f = x^n + ax^k + b \in \mathbb{Z}\left[ x \right]$ over $\mathbb{Q}$ for all $1 \leq \left| ab \right| \leq 3$, $n$ sufficiently large and $k$ coprime to $n$.
    In particular, we prove that the Galois group of $f$ is the maximal one that its factorization type allows.
    We give a criterion for the Galois group of a factor of a trinomial to be symmetric, with an eye towards future applications.
    To the best of our knowledge, the only previously known explicit example of polynomials of degree $n$ with bounded coefficients and Galois group $S_n$ is $x^n - x - 1$, while recent results show that the probability that a random polynomial of degree $n$ with reasonably bounded coefficients has Galois group $S_n$ tends to $1$ as $n$ tends to infinity.
    Using our calculation, one can construct many new such explicit examples.
  \end{abstract}

\section{Introduction}
Let $f$ be a polynomial of the form $x^n + a x^k + b$ with $n > k > 0, a, b \in \Z \backslash \left\{ 0 \right\}$ and $1 = \gcd\left( n,k \right)$.
For each $\ell \in \N$ let $\zeta_\ell$ be a primitive $\ell$-th root of unity.
The first main result of this paper is the following:
\begin{theorem}
  \label{thm:main}
  For sufficiently large $n$, the following is true
  \[
    \begin{matrix}
      \text{If }
      \left| ab \right| \leq 3 
    & \text{ then }
      &
      \Gal\left( f \right)
      \cong
      \begin{cases}
        S_{n-2} \times S_2 
        & 
        f\left( \zeta_3 \right)f\left( \zeta_6 \right) = 0 \\
        S_{n-1} 
        & 
        f\left( 1 \right)f\left( -1 \right) = 0 \\
        S_n & \text{otherwise}
      \end{cases}
      \\ \\
      \text{If }
      \left( a,b \right) \in 
      \left\{ 
        \begin{matrix}
        \left( \pm 1, \pm 4 \right),
        \left( \pm 2, \pm 2 \right),
        \\
        \left( \pm 4, 1 \right),
        \left( \pm 1, \pm 5 \right),
        \left( \pm 5, 1 \right)
        \end{matrix}
        \right\} 
      &\text{ then }
      &
      \Gal\left( f \right) \cong S_{n}
      .
    \end{matrix}
  \]
  
  Specifically, the above holds if $n > 8$ and $f$ is irreducible, or if either 
  \begin{enumerate}
    \item $\left| ab \right| = 1$,
    \item $\left| ab \right| = 2$ and $n > 8$,
    \item $\left| ab \right| = 3$ and $n > 2^{18}$, or 
    \item $\left| ab \right| = 5$ and $n > 2^{50}$ 
  \end{enumerate}
  \end{theorem}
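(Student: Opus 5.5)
The proof has two independent parts: a factorization statement and a Galois-theoretic statement about the nontrivial factor. Throughout write $f = x^n + ax^k + b$ with $\gcd(n,k)=1$.

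\textbf{Factorization.} We show that for $n$ large and the listed $(a,b)$, $f = c(x)\,g(x)$, where $c(x)$ is the product of the cyclotomic factors $x\mp 1$ or $x^2\pm x+1$ (present exactly when $f(\pm1)=0$, resp.\ $f(\zeta_3)f(\zeta_6)=0$), and $g$ is irreducible.

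For $|ab|=1$ this is Ljunggren's theorem on $x^n\pm x^k\pm 1$. For larger $|ab|$ I would follow Ljunggren's method, as extended by Schinzel and Filaseta. Let $\tilde g(x)=x^{\deg g}g(1/x)$ and compare $f(x)\tilde f(x)$ with $g(x)\tilde g(x)$; a nontrivial non-reciprocal factorization would force a second trinomial or quadrinomial identity with coefficients bounded in terms of $|a|,|b|$. The gcd condition $\gcd(n,k)=1$ then rules this out except for finitely many exponent patterns.

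The explicit bounds $2^{18}$ and $2^{50}$ should come from quantitative versions of this argument (Filaseta-type). I expect this to be the main obstacle: reciprocal factors must be excluded by checking roots of unity, which gives exactly $\pm1,\zeta_3,\zeta_6$ because $|f(\zeta)|$ is too large otherwise. Non-reciprocal factorizations must be excluded by a lattice or height argument.

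\textbf{Galois group of $g$.} Set $m=\deg g\in\{n,n-1,n-2\}$. I would show $\Gal(g)=S_m$ by the standard route: transitivity, primitivity, a transposition, and a Jordan-type criterion.

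1. \emph{Transposition.} The discriminant of a trinomial is, up to sign,
$$\operatorname{disc}(f)=\pm\, b^{k-1}\left(n^n b^{n-k}-(-1)^n(n-k)^{n-k}k^k a^n\right).$$
Any prime $p$ dividing $\operatorname{disc}(f)$ but not $nk(n-k)ab$ gives a unique double root modulo $p$. This is because $f'=x^{k-1}(nx^{n-k}+ka)$ makes the critical points simple, so inertia at $p$ acts as a transposition.
2. \emph{Such a $p$ exists.} For $|ab|\le 3$ (or $5$) this requires that the integer $D=n^nb^{n-k}\pm(n-k)^{n-k}k^ka^n$, after removing the cyclotomic contribution, is not composed only of primes dividing $nk(n-k)ab$. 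Since $\gcd(n,k)=1$, $D$ is coprime to $nk(n-k)$ apart from small primes dividing $ab$. So it suffices that $D$ is not $\pm1$ times a product of powers of $2,3,5$ together with the factors coming from $c(x)$. This holds once $|D|$ exceeds its cyclotomic part, which is easy to check.
3. \emph{Primitivity and Jordan.} Since $g$ is irreducible, $\Gal(g)$ is transitive. To obtain primitivity, I would use a criterion from the paper for factors of trinomials, or argue directly: a block system would yield a decomposition $g=h(r(x))$, which is impossible for a trinomial factor with $\gcd(n,k)=1$ by the Schinzel/Fried results on decomposability.

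A primitive group containing a transposition is symmetric, so $\Gal(g)=S_m$.

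\textbf{Conclusion.} The splitting field of $f$ is the compositum of the splitting field of $g$ with $\mathbb{Q}(\zeta_3)$ when $x^2\pm x+1$ divides $f$. The group $S_m$ contains a unique index-2 subgroup, $A_m$, whose fixed field is $\mathbb{Q}(\sqrt{\operatorname{disc} g})$. The two fields are linearly disjoint unless $\operatorname{disc}(g)$ is $-3$ times a square. That is excluded by a prime dividing $\operatorname{disc}(g)$ to an odd power, namely the transposition prime $p$, which ramifies tamely with exponent $1$. This gives $S_{n-2}\times S_2$, and similarly $S_{n-1}$ when $f(\pm1)=0$.
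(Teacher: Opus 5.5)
Your reduction (transitive $+$ primitive $+$ a transposition $\Rightarrow S_m$) is sound. The gap is that the primitivity input is not justified, and that is where the whole difficulty of the theorem lies.

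A block system for $\Gal(g)$ corresponds to an intermediate field $\Q\subsetneq K\subsetneq L=\Q(\alpha)$, not to a functional decomposition of $g$. You can always write $K=\Q(r(\alpha))$ for a polynomial $r$, but then $g$ merely divides $h(r(x))$, where $h$ is the minimal polynomial of $r(\alpha)$. The Fried/Schinzel results you cite concern identities $g=h\circ r$, i.e.\ the geometric monodromy of $f(x)-t$. They say nothing about subfields of one particular number field, and in any case $g$ is only a factor of a trinomial. For example, $\sqrt2+\sqrt[3]{3}$ has the indecomposable minimal polynomial $x^6-6x^4-6x^3+12x^2-36x+1$, yet it generates a field containing $\Q(\sqrt2)$.

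Excluding such $K$ is exactly what the paper's machinery does. For every $p\mid ab$, the magnetic-root analysis bounds $v_p(\disc L)$ by $n\,v_p(ab)$ plus a correction controlled by the number of magnetic roots (\Cref{prop:discriminant_bound}). Hence a subfield $K\neq\Q$ of $L$ unramified outside $ab$ has small $p$-adic root discriminant. By the LMFDB tables it then contains one of $\Q(i)$, $\Q(\sqrt{-3})$, $\Q(\sqrt{\pm2})$, $\Q(\sqrt5)$, and these are excluded one by one in \Cref{lem:fields_not_embeddable}. The theorem omits $(a,b)=(\pm4,-1),(\pm5,-1)$ precisely because $\Q(\sqrt2)$, resp.\ $\Q(\sqrt5)$, could not be excluded there. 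Nothing in your plan substitutes for this step.

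The transposition step also has a gap. For $p\nmid nk(n-k)ab$ with $p\mid\disc(f)$ you do get a unique double root mod $p$. However, inertia is a transposition only if $p$ actually ramifies in $L$, and an even power of $p$ in $\disc(f)$ can come entirely from the index of $\Z[\alpha]$. So you need a prime $p\nmid ab$ occurring to an odd power in $\disc(g)$. The argument that $|D|$ exceeds its cyclotomic part does not even show that $D$ has a prime factor outside $\{2,3,5\}$. Whether $n^nb^{n-k}\pm(n-k)^{n-k}k^ka^n$ can be $\pm$ a product of powers of primes dividing $ab$ is a genuine exponential Diophantine problem, and the odd-exponent version is a squarefree-part question that is out of reach in general. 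Your linear-disjointness argument inherits this gap, whereas the paper simply notes that $g$ is separable mod $3$.

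The paper never needs a specific prime. Let $N$ be the normal subgroup generated by the inertia groups at primes $p\nmid ab$. The $N$-orbits on the embeddings of $L$ correspond to a subfield of $L$ unramified outside $ab$ (possibly $L$ itself), which is $\Q$ by the above, so $N$ is transitive. Each of these inertia groups acts by at most one transposition (\Cref{cor:inertia_group_acts_as_transposition_external_settings}). Thus $N$ is a transitive group generated by transpositions, which supplies transpositions and primitivity at once.
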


Informally, \Cref{thm:main} states that 
if $a$ and $b$ are small enough and $n$ is large enough, then the only obstruction to a permutation of the roots of $f$ is
for some of them to be roots of unity.

We prove \Cref{thm:main} by proving a more general result, that perhaps can be used to prove stronger versions of it in the future.
To state it, we need to introduce some more definitions and notations that are used throughout the paper.
Let $g$ be an irreducible factor of $f$ and let $L = \Q\left[ x \right] / \left( g \right)$.
Fix an algebraic closure $\overline{\Q}$ of $\Q$.
For each prime $p \in \Z$ fix an embedding of $\overline{\Q}$ into an algebraic closure $\overline{\Q}_p$ of the field of $p$-adic numbers $\Q_p$, and denote by $v_p$ the $p$-adic valuation on $\overline{\Q}_p$.
For each number field $K$ let $\rd\left( K \right)$ denote the root discriminant of $K$, i.e. the $\left[ K : \Q \right]$-th root of the absolute value of the discriminant of $K$.
The more general result can then be stated as follows: 
\begin{theorem}
  \label{thm:criterion}
  Assume that there is a constant $C_p > v_p\left( ab \right)$ for each prime $p|ab$, such that the only subextension $K \subseteq L$ satisfying both  
  \begin{enumerate}
    \item $v_p\left( \rd\left( K \right) \right) < C_p$ for each $p|ab$, and
    \item $K$ is unramified away from $ab$,
  \end{enumerate} 
  is $\Q$.
  Assume that 
  \[
    \deg g > \max_{p| ab}\left( \frac{n v_p\left( ab \right) + \exrootscountbound}{C_p} \right)
    .
  \] 
  Then the Galois group of $g$ over $\Q$ is $S_{\deg L}$.
\end{theorem}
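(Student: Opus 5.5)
We would follow the classical strategy for proving that a Galois group is symmetric. We show that $G = \Gal(g)$ is primitive, and that it contains a transposition. We then invoke the standard fact that a primitive permutation group containing a transposition is the full symmetric group. The two hypotheses of \Cref{thm:criterion} feed into these two steps separately.

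\textbf{Primitivity.} Suppose $G$ is imprimitive. Then $L$ contains a proper intermediate field $\Q \subsetneq K \subsetneq L$. Every subfield of $L$ is unramified away from $\disc(f)$. Using the standard formula
\[
\disc\left( x^n + ax^k + b \right) = \pm b^{k-1}\left( n^n b^{n-k} - (-1)^n (n-k)^{n-k} k^k a^n \right),
\]
we want to control the ramification of $K$ at primes $p \nmid ab$. The plan is to show that at such primes the ramification of $L$ is tame and comes only from double roots of $f$ mod $p$, so it is generated by transpositions (inertia of order $2$ acting as a transposition). Then $K$, a proper subfield, could only pick up ramification at such $p$ if the inertia group were not contained in the stabilizer of $K$. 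We would try to show that the normal subgroup $N$ generated by all inertia groups at primes $p\nmid ab$ is generated by transpositions. We would then argue: either $N$ acts with blocks incompatible with the block system of $K$, forcing $G$ to be symmetric anyway, or $N$ fixes $K$ in the appropriate sense, making $K/\Q$ unramified away from $ab$.

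This gives condition 2. For condition 1, we bound the $p$-part of the root discriminant of $K$ for $p \mid ab$. The relevant $p$-adic valuation of $\disc(g)$ is at most $n v_p(ab)$ plus a correction term, namely the \exrootscountbound\ term coming from Newton polygon and wild-ramification bookkeeping. The subfield $K$ has degree $[L:K]^{-1}\deg g$. Conductor-discriminant monotonicity for subfields, $v_p(\rd(K)) \le v_p(\rd(L))$, gives
\[
v_p(\rd(K)) \le \frac{n v_p(ab) + \exrootscountbound}{\deg g} < C_p
\]
by the degree hypothesis. The hypothesis then forces $K = \Q$, a contradiction. The same argument applied to $L$ itself, or a transposition-generated argument, rules out the case where no ramification exists at primes $p \nmid ab$.

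\textbf{Transposition.} Since $\deg g$ exceeds $\frac{n v_p(ab)+\dots}{C_p} \ge$ something positive, $L \ne \Q$. The hypothesis applied to $K = L$ itself shows $L$ is ramified at some prime $p \nmid ab$. At such a prime, $f$ mod $p$ has only a double root. This is because $\gcd(n,k)=1$ forces the common roots of $f$ and $f'$ to be simple points of multiplicity exactly $2$, which is a standard trinomial computation. Hence the inertia group is generated by a transposition. This gives a transposition in $G$. Combined with primitivity, $G = S_{\deg L}$.

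\textbf{Main obstacle.} The hardest step is the local analysis. It must show two things: that at $p \nmid ab$ the inertia is generated by a single transposition (multiplicity exactly $2$, tame), and that at $p \mid ab$ the discriminant exponent is bounded by $n v_p(ab)+\exrootscountbound$. For the former, we would first try the Osada-style argument that $x^{k}(nx^{n-k}+ka)$ and $f$ share roots only of multiplicity $2$ modulo $p$. For the latter, we would try a Newton polygon decomposition of $f$ over $\Q_p$.
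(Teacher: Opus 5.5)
Your argument is correct in outline, but it reaches the symmetric group by a different group-theoretic route from the paper's.

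\textbf{The paper's route.} It never considers block systems. It lets $N$ be the normal subgroup generated by the inertia groups at primes not dividing $ab$; by \Cref{cor:inertia_group_acts_as_transposition_external_settings}, $N$ acts on the roots through transpositions. The orbit set $\Hom_{\Q}\left(L,\overline{\Q}\right)/N$ is a transitive $G_\Q$-set, so it corresponds to a subfield $K\subseteq L$ fixed by $N$, i.e.\ unramified away from $ab$ (\Cref{thm:strategy}). The inequality $v_p(\disc(L))\ge \deg L\cdot v_p(\rd(K))$, together with \Cref{prop:discriminant_bound} and the hypothesis, forces $K=\Q$. Hence $N$ is transitive and generated by transpositions, so it is already all of $S_{\deg g}$.

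\textbf{Your route.} You prove primitivity and then exhibit one transposition. This costs two applications of the hypothesis (to a block field and to $L$ itself). It also needs one lemma that your sketch leaves muddled. The alternative ``$N$ acts with blocks incompatible with the block system of $K$, forcing $G$ to be symmetric anyway'' never occurs. A transposition $\tau=(i\,j)$ in a group with a nontrivial block system stabilizes every block: if $i\in B$ and $j\notin B$, any $l\in B\setminus\{i\}$ gives $l\in\tau(B)\cap B$ while $\tau(B)\ne B$. With this lemma, every inertia group at a prime not dividing $ab$ acts trivially on $\Hom_{\Q}\left(K,\overline{\Q}\right)$. So the Galois closure of $K$ is unramified away from $ab$, and the rest of your argument goes through.

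\textbf{Comparison.} The two local inputs you single out as the main obstacle are exactly \Cref{cor:inertia_group_acts_as_transposition_external_settings} and \Cref{prop:discriminant_bound}. The paper's route is shorter. It also gives the slightly stronger statement that inertia outside $ab$ alone generates $S_{\deg g}$. Yours is the more familiar Jordan-type criterion.

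\textbf{One caution.} The bound at $p\mid ab$ must be on the field discriminant $\disc(L)$, which the paper obtains by summing local differents root by root. It should not be on the polynomial discriminant of $g$, which can be strictly larger because $\Z[\alpha]$ need not be maximal at $p$.
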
 
The reason that this result is helpful, is that there are only finitely many number fields with root discriminant below some small enough bounds, as guaranteed for example by the work of Odlyzko \cite{OdlyzkoBoundsII} and as is made even more explicit by the LMFDB \cite{lmfdb}.

A specific case of \Cref{thm:main} that is worth highlighting, is the case $f = x^n - 2x^{n-1} + 1$, in which $f$ factors into $x-1$ times the $\left( n-1 \right)$-th generalized {Fibonacci} polynomial
\[
  x^{n-1} - x^{n-2} - \ldots - x - 1,
\] 
which was proven to be irreducible by Miles \cite{Miles1960} and Wolfram \cite{Wolfram1998}.
Martin \cite{MARTIN2004213} proved that the Galois group of this polynomial is $S_{n-1}$ for all odd $n$ and conjectured the same for all even $n$.
An immediate corollary of \Cref{thm:main} is the following generalization of Martin's conjecture:
\begin{corollary}
  \label{cor:generalized_fibonacci}
  The Galois group of 
  \[
    x^{n-1} + \ldots + x^k - x^{k-1} - \ldots - 1
  \]
  is $S_{n-1}$ for all coprime $n > k > 0$ such that $n > 8$.
\end{corollary}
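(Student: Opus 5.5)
We obtain \Cref{cor:generalized_fibonacci} by multiplying the given polynomial by $x-1$ and applying \Cref{thm:main}. Write
\[
  h = x^{n-1} + \ldots + x^k - x^{k-1} - \ldots - 1.
\]
A telescoping computation gives
\[
  (x-1)\left(x^{n-1}+\ldots+x^k\right) = x^n - x^k,
  \qquad
  (x-1)\left(x^{k-1}+\ldots+1\right) = x^k - 1 .
\]
Hence
\[
  (x-1)h = x^n - x^k - (x^k - 1) = x^n - 2x^k + 1 .
\]
So $(x-1)h = f$ with $a=-2$, $b=1$ and $|ab|=2$, where $\gcd(n,k)=1$ and $n>8$. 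Item 2 of \Cref{thm:main} therefore applies, and it remains to determine which case of the trichotomy occurs.

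First, $f(1) = 1-2+1 = 0$, so the second case is triggered. We must also rule out the first case, i.e. check $f(\zeta_3)f(\zeta_6)\neq 0$. The case statement is presumably read with the exceptional cases exclusive, but we verify this directly to be safe. Let $\zeta$ be a primitive $3$rd or $6$th root of unity. Then $|\zeta^n+1| = |2\zeta^k|$ would force $|\zeta^n+1|=2$, hence $\zeta^n=1$. In that case $2 = 2\zeta^k$, so $\zeta^k=1$, which is impossible since $\gcd(n,k)=1$ while the order of $\zeta$ exceeds $1$. Thus neither $\zeta_3$ nor $\zeta_6$ is a root, and \Cref{thm:main} gives $\Gal(f)\cong S_{n-1}$.

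Finally, we transfer this to $h$. The splitting field of $f=(x-1)h$ equals that of $h$, and the action on the roots is the action on the roots of $h$ together with the fixed root $1$. We should check that $1$ is a simple root, so that $h$ has exactly $n-1$ roots distinct from $1$: indeed $h(1) = (n-k)-k = n-2k$, which is nonzero because $n=2k$ would contradict coprimality when $n>2$. Hence $\Gal(h)\cong S_{n-1}$ acting on its $n-1$ roots. This also shows $h$ is irreducible, since $S_{n-1}$ acts transitively.

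The only potentially delicate point is interpreting the case distinction in \Cref{thm:main} correctly, in particular confirming that $f(\pm1)=0$ with no cyclotomic roots yields $S_{n-1}$ acting on the non-unit roots. The verification above settles this, and everything else is routine algebra.
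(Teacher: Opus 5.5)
Your proposal is correct and follows the paper's route: the paper states the corollary as an immediate consequence of \Cref{thm:main}, applied to $(x-1)h = x^n - 2x^k + 1$, which has $|ab| = 2$ and $n > 8$. Your extra checks, that $\zeta_3$ and $\zeta_6$ are not roots and that $1$ is a simple root of $f$, only make explicit what the paper leaves implicit.
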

Martin's conjecture is that prompted our own interest in the subject through a recent related result of Bary-Soroker, Shusterman, and Zannier \cite[Appendix A]{Soroker_appendix_2017}, 

\Cref{thm:main}, \Cref{thm:criterion} and \Cref{cor:generalized_fibonacci} should all be understood in the context of recent developments in the field of explicit Hilbert irreducibility.
By Hilbert's irreducibility theorem \cite{Hilbert}, we expect a general rational polynomials of degree $n$ to have Galois group $S_n$.
Nevertheless, giving explicit examples of such polynomials for all $n$ remained a difficult problem.
Only 38 years after Hilbert's work was published, Schur \cite{schur1931affectless} showed that the Laguerre polynomials 
\[
  L_n^{\left( 0 \right)}\left( -x \right) = \sum_{k=0}^n \binom{n}{k} \frac{x^k}{k!}
\]
have Galois group $S_n$ for every $n$.
Later, the same was proven for other families of generalized Laguerre polynomials, such as the Hermite polynomials, $L^{\left( \pm 1/2 \right)}_n$ \cite{schur1931affectless}, and the reverse Bessel polynomials, $L^{\left( -2n-1 \right)}_n$ \cite{grosswald1978bessel} \cite{filaseta2002irreducibility}.

Note that in all of these examples, the height of the coefficients tend to infinity rather fast.
While we were working on this paper it was shown by Blachar, Breuillard and Kozma \cite{blachar2026galoisgroupsrandompolynomials} that the probability that the Galois group of a polynomial is $S_n$ tends to 1 as $n$ tends to infinity, even if the coefficients are sampled out of some bounded sets of integers. 
This is an improvement upon previous results in that direction, for example by Bary-Soroker, Koukoulopoulos and Kozma \cite{Bary_Soroker_2023} and by Breuillard and Varj\'{u} \cite{Breuillard_2019}.
These previous results provided some of the motivation for this work, as it appears that the only known example with a bounded height is the family of Selmer trinomials $f_n\left( x \right) = x^n - x - 1$ , which was shown to be irreducible by Selmer \cite{SelmerIrreducibility1956} and to have Galois group $S_n$ by Nart and Vila \cite{NartVila1979}. 
The mentioned family of $n$-th generalized Fibonacci polynomials is just one out of many families that we now know to have symmetric Galois group thanks to \Cref{thm:main}. 

\subsection{Proof Overview}
Our work can be seen as a complement of the work of Osada \cite{Osada1987TheGG}, which provided conditions that guarantee that $\Gal\left( f \right) \simeq S_{\deg f}$.
Osada's idea was that since a transitive permutation group that is generated by transpositions is symmetric, and since, as a consequence of Hermite-Minkowski Theorem, the Galois group of $f$ is generated by its inertia groups, it is enough to show that the inertia group at any non-Archimedean place acts on the roots of $f$ either trivially or by a single transposition.
In his work, Osada imposed some conditions on $f$ under which he was able to do exactly that.

The work of Martin \cite{MARTIN2004213} used the same approach to prove his aforementioned result for odd $n$, and its failure for even $n$ provides an instructive example of the limitations of this approach.
Indeed, for even $n$, 
\[
  x^n - 2x^{n-1} + 1 \equiv x^n + 1 \pmod{2}
\] 
and hence the inertia group of a place above $2$ acts on the roots of $f$, or of some $g$ dividing $f$ for that matter, non-trivially and by a single transposition. 
However, at any non-Archimedean place away from $ab$, we can, and in this paper do, use Osada's approach to show that the inertia group acts on the roots of $f$ either trivially or by a single transposition.
In this situation, rather than implying that the Galois group of $g$ over $\Q$ is symmetric, Hermite-Minkowski Theorem only implies that the Galois group of $g$ over the maximal subextension of $L = \Q\left[ x \right] / \left( g \right)$ that is unramified away from $ab$ is symmetric .
Thus, for the Galois group of $g$ over $\Q$ to be symmetric, it is enough to show that the only such subextension is $\Q$ itself.

As is explained in more detail in \Cref{sec:global}, the above analysis implies that we can prove \Cref{thm:criterion} by proving a small enough bound on the $p$-adic valuation of the discriminant of $L$ for any prime $p | ab$.
This is exactly what we do in \Cref{sec:local}, by analyzing the valuation of the different ideal of a field extension generated by a root of $f$ over a general non-Archimedean local field.
For conciseness, we chose to present the necessary result for the primes that do not divide $ab$, as a consequence of this much more complicated analysis that was originally written to tackle the problematic primes that divide $ab$.
However, we note that the same result could have been proven in a much more direct manner, similar to the way Osada proved \cite[Lemma 3]{Osada1987TheGG}.

To derive \Cref{thm:main} from \Cref{thm:criterion}, we prove in \Cref{sec:examples} different obstructions for the embeddability of number fields that are unramified away from $ab$ and have a small root discriminant, as well as a few lemmas that help us analyze the factorization of $f$ over $\Q$.

\subsection{Acknowledgements and tools}
I would like to thank my advisor, Borys Kadets, for many helpful remarks and suggestions along the way.
I used mainly SageMath and occasionally Magma for calculations and verifications.
No LLM was used to derive any of the results presented in this paper.
I did use GitHub Copilot for autocompletion and Gemini Pro for code generation, to find relevant literature and to translate some of it into English.
The author was supported by the Israel Science Foundation (ISF), Grant No. 2112/26 (to Borys Kadets)

  \section{
    \label{sec:local}
    Local Analysis
  }
  \Cref{cor:inertia_group_acts_as_transposition_external_settings} and \Cref{prop:discriminant_bound} are the only results that we use from \Cref{sec:local} in later sections, and thus a reader who would like to skip the technical details of the local analysis can do so and still be able to follow the rest of the paper.

  Let $\F$ be a non-Archimedean local field with local ring $R$, and residue field $\kappa$. 
  Let $\overline{\F}$ be an algebraic closure of $\F$ and let $\overline{\kappa}$ be the residue field of $\overline{\F}$.
  Let $v$ be an extension to $\overline{\F}$ of the unique valuation of $\F$ for which $v\left( K \right) = \Z$.
  Let 
  \[
    \h{} = a_n x^n + a_m x^m + a_k x^k \in \F\left[ x, x^{-1} \right]
  \]
  be a trinomial with $a_n, a_m, a_k \neq 0$ and 
  \[
    1 = \gcd\left( n - m, m - k, k - n \right) = \gcd\left( n - m, m - k \right).
  \]
  We use $\left( \h{} \right)$ to denote the principal ideal generated by $\h{}$ in the ring $\F\left[ x, x^{-1} \right]$.
  Note that no assumption is made about the order of $n,m$ and $k$ and thus all statements about them are true for all of the permutations of $n,m$ and $k$.
  In light of this, in the sequel we often state or define something for some permutation of $n,m$ and $k$ and then use it for other permutations as well. 
  
  Let $\alpha \in \overline{\F}^\times$.
  Note that if $\alpha$ is a root of $\h{}$, then
  \[
    \alpha \h{}'\left( \alpha \right) 
    = 
    \alpha \h{}'\left( \alpha \right) - n\h{}\left( \alpha \right)
    =
    (m - n) a_m \alpha^m + (k - n) a_k \alpha^k.
  \] 
  Thus, let us denote
  \[
    \h{n} 
    =
    x\h{}' - n\h{} 
    = 
    (m - n) a_m x^m + (k - n) a_k x^k
  \]
  For most cases, we can bound $v\left( \h{}'\left( \alpha \right) \right)$ using the non-Archimedean triangle inequality and the summation term of least valuation of either $\h{n}$, $\h{m}$ or $\h{k}$.
  We will try to first characterize the cases where this strategy fails.
  
  \subsection{Magnetic roots}
  So that we can speak of each of the summation terms of $\h{n}$ separately, let us denote
  \[
    \h{n,m} = (m - n) a_m x^m.
  \]
  Note that 
  \begin{align}
    \label{eq:valuation_inequality_condition}
    & v\left( \alpha\h{n}\left( \alpha \right) \right) 
    > 
    \min\left\{  
      v\left( \h{n,m}\left( \alpha \right) \right), 
      v\left( \h{n,k}\left( \alpha \right)\right)
    \right\}
  \end{align}
  if and only if
  \begin{equation}
    \label{eq:valuation_positivity_condition}
    v\left( 1 + \frac{\h{n,m}\left( \alpha \right)}{\h{n,k}\left( \alpha \right)} \right) > 0,
  \end{equation}
  
  i.e. if and only if 
  \begin{equation}
    \label{eq:principal_unit_condition}
    -\frac{\h{n,m}\left( \alpha \right)}{\h{n,k}\left( \alpha \right)}
    =
    -\frac{
      \left( m - n \right)a_m \alpha^m
    }{
      \left( k - n \right)a_k \alpha^k
    }
    \text{ is a principal unit.}
  \end{equation}
  To state the last condition in a more useful language, we denote 
  \[
    \q{n}{m} 
    :=
    -\frac{
      \left( m - n \right)a_m
    }{
      \left( k - n \right)a_k
    } 
  \]
  and make the following
  \begin{definition}
    \label{def:pi}
    Let $\pu$ be the groups of principal units in $\overline{\F}$, i.e the groups of elements in $\overline{\F}$ whose difference from $1$ has positive valuation. 
    Let 
    \[
      \pi: \overline{\F}^\times \to \frac{\overline{\F}^\times}{\pu}
    \]
    be the canonical quotient map. 
  \end{definition} 
  In this language condition \cref{eq:principal_unit_condition} can be stated as
  \begin{equation}
    \label{eq:mod_principal_units_condition}
    \pi\left(  
      \q{n}{m}
    \right) = 
    \pi\left( \alpha^{k - m} \right)
  \end{equation}
  Since $\F\left[ x, x^{-1} \right]^\times = \F^\times x^{\Z}$, if \cref{eq:mod_principal_units_condition} holds for $h{}$ then it also holds for any other generator of $\left( \h{} \right)$.
  Thus, we can make the following
  \begin{definition}
    If the equivalent conditions \cref{eq:valuation_inequality_condition}, \cref{eq:valuation_positivity_condition}, \cref{eq:principal_unit_condition} and \cref{eq:mod_principal_units_condition} hold, we say that $\pi\left( \alpha \right)$ is an \textbf{\es{n}} disk of $\left( \h{} \right)$.
    We say that $\pi\left( \alpha \right)$ is \textbf{\anex} disk of $\left( \h{} \right)$
    if it is \es{n}, \es{m} and \es{k}.
    In many cases we write $\h{}$ instead of $\left( \h{} \right)$, and if $\alpha$ is also a root of $\h{}$ we say that it is \textbf{\anex (respectively an \es{n}) root of $\h{}$}.
  \end{definition}
  We use the analogy of magnetism because, as explained above, for a magnetic root $\alpha$, the valuation of $\h{}'\left( \alpha \right)$ is bigger than, i.e., in the metric induced by $v$, $\alpha$ is on average closer to its conjugates than, what one might expect by bounding the valuations of $\h{i,j}$ for each pair of distinct $i,j \in \left\{ m,n,k \right\}$.
  We use the term disk because each fiber of $\pi$ is a disk in the metric induced by $v$. 

  \begin{lemma}
    \label{lem:scalability}
    Assume that $\pi\left( \alpha \right)$ is \anex disk of $\h{}$.
    Fix $a \in \F^\times$ and $\delta =\pm 1$.
    Then $\pi\left( a\alpha^\delta \right)$ is \anex disk of $\h{}\left( x^\delta / \alpha \right)$.
  \end{lemma}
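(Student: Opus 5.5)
The plan is to verify condition \cref{eq:mod_principal_units_condition} directly for the transformed trinomial, using only that $\pi$ is a group homomorphism. I read the transformed trinomial as the substitution that sends $\alpha$ to $\beta := a\alpha^\delta$, namely
\[
  H(x) := \h{}\left( \left( x/a \right)^\delta \right) = a_n a^{-\delta n} x^{\delta n} + a_m a^{-\delta m} x^{\delta m} + a_k a^{-\delta k} x^{\delta k}.
\]
The statement's notation $\h{}\left( x^\delta / \alpha \right)$ is presumably a slip for this. By definition, $\alpha = \left( \beta/a \right)^\delta$.

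First I check that $H$ fits the standing setup. It is a Laurent trinomial in $\F\left[ x, x^{-1} \right]$ with nonzero coefficients, since $a \in \F^\times$. Its exponents are $\delta n, \delta m, \delta k$. Their pairwise differences are $\pm$ the original differences, so $\gcd\left( \delta n - \delta m, \delta m - \delta k \right) = 1$ still holds. Hence the notions of \es{n} disk and the quantities $\q{n}{m}$ make sense for $H$, with the index $n$ now attached to the exponent $\delta n$ and similarly for $m$ and $k$.

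Next I compute the analogue of $\q{n}{m}$ for $H$:
\[
  \q{n}{m}^{H}
  = -\frac{\left( \delta m - \delta n \right) a_m a^{-\delta m}}{\left( \delta k - \delta n \right) a_k a^{-\delta k}}
  = \q{n}{m} \, a^{\delta\left( k - m \right)} .
\]
The factor $\delta$ cancels between numerator and denominator. The condition for $\pi\left( \beta \right)$ to be an \es{n} disk of $H$ is
\[
  \pi\left( \q{n}{m}^H \right) = \pi\left( \beta^{\delta k - \delta m} \right).
\]
Its right-hand side is
\[
  \pi\left( a^{\delta\left( k-m \right)} \alpha^{\delta^2\left( k - m \right)} \right) = \pi\left( a^{\delta\left( k-m \right)} \right) \pi\left( \alpha^{k-m} \right),
\]
using $\delta^2 = 1$ and that $\pi$ is a homomorphism. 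Its left-hand side is
\[
  \pi\left( \q{n}{m} \right) \pi\left( a^{\delta\left( k-m \right)} \right).
\]
After cancelling the common factor, the required equality is exactly \cref{eq:mod_principal_units_condition} for $\h{}$ and $\alpha$, which holds because $\pi\left( \alpha \right)$ is \es{n} by hypothesis.

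Finally, the paper notes that all statements are symmetric under permuting $n, m, k$. The same computation with the roles permuted therefore shows that $\pi\left( \beta \right)$ is \es{m} and \es{k} for $H$ as well. So $\pi\left( \beta \right)$ is \anex disk of $H$. I expect no genuine obstacle: the only care needed is matching exponent and coefficient labels correctly under $x \mapsto \left( x/a \right)^\delta$, particularly when $\delta = -1$ reverses the order of the exponents. The cancellation of $\delta$ in $\q{n}{m}^H$ and the identity $\delta^2 = 1$ handle that case uniformly.
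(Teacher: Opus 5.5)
Your proof is correct and is essentially the paper's argument. The paper also takes $u(x)=ax^\delta$ and works with $h\circ u^{-1}=h((x/a)^\delta)$, so your reading of the typo matches. It then observes that each summation term of $h\circ u^{-1}$ evaluated at $u(\alpha)$ equals the corresponding term of $h$ at $\alpha$ (up to the harmless sign $\delta$), which transfers the magnetism conditions; your computation $\pi(q_{n,m}a^{\delta(k-m)})=\pi(\beta^{\delta(k-m)})\iff\pi(q_{n,m})=\pi(\alpha^{k-m})$ is the same check written for the ratio of two such terms.
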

  \begin{proof}
    Let $u\left( x \right)$ be $a x^\delta $.
    Then $\h{} \circ u$ is a trinomial with coefficients $\delta m, \delta n, \delta k$ and.
    One can check that 
    \[
      \left( \h{} \circ u^{-1} \right)_{\delta m,\delta n}\left( u\left( \alpha \right) \right) = \h{m,n}\left( \alpha \right)
    \]
    and the rest follows from opening the definition of \ex disks.
  \end{proof}

  \begin{lemma}
    \label{lem:two_escaping_implies_ex}
    If $\pi\left( \alpha \right)$ is \es{n} and \es{m} disk of $\h{}$, 
    then it is \anex disk of $\h{}$.
  \end{lemma}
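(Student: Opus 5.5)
The plan is a direct computation inside the group $\overline{\F}^\times/\pu$. Each escaping condition says that a certain ratio is a principal unit. I expect the ratio for the index $k$ to be exactly the quotient of the ratios for the indices $m$ and $n$. Since $\pu$ is a subgroup of $\overline{\F}^\times$, the claim then follows at once. Throughout I use that $n,m,k$ are pairwise distinct and $a_n,a_m,a_k\neq 0$, as in the setup of a trinomial, so that all the quantities below are nonzero.

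First I write out the three conditions in the form \cref{eq:principal_unit_condition}, reading the definition of $\h{n}$ with the roles of $n,m,k$ permuted. Being \es{n} means that
\[
R_n := -\frac{(m-n)a_m\alpha^{m}}{(k-n)a_k\alpha^{k}} \in \pu .
\]
Being \es{m} means that, with $\h{m}=(n-m)a_nx^n+(k-m)a_kx^k$,
\[
R_m := -\frac{(n-m)a_n\alpha^{n}}{(k-m)a_k\alpha^{k}} \in \pu .
\]
Being \es{k} means that, with $\h{k}=(n-k)a_nx^n+(m-k)a_mx^m$,
\[
R_k := -\frac{(n-k)a_n\alpha^{n}}{(m-k)a_m\alpha^{m}} \in \pu .
\]
(If a different orientation of the ratio is chosen, one gets the inverse, which changes nothing because $\pu$ is a group.)

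Next I compute $R_m/R_n$. The factors of $a_k\alpha^k$ cancel, leaving
\[
\frac{R_m}{R_n}=\frac{(n-m)(k-n)}{(k-m)(m-n)}\cdot\frac{a_n\alpha^{n}}{a_m\alpha^{m}} .
\]
Using $(n-m)/(m-n)=-1$ and $(k-n)/(k-m)=(n-k)/(m-k)$, this is exactly $R_k$. Hence $R_k=R_mR_n^{-1}\in\pu$, so $\pi(\alpha)$ is \es{k}. Together with the two hypotheses, this means $\pi(\alpha)$ is \anex disk of $\h{}$.

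There is no real obstacle here; the only step needing care is the sign and index bookkeeping when reading off $\h{m}$ and $\h{k}$ from the definition of $\h{n}$ by permutation. It is also worth stating explicitly that $\pu$ is a multiplicative subgroup of $\overline{\F}^\times$. This holds because $v(xy-1)\ge\min\{v(x(y-1)),v(x-1)\}>0$ and $v(x^{-1}-1)=v(1-x)>0$ for $x,y\in\pu$.
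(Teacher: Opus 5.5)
Your proof is correct and is essentially the paper's argument. The paper records the same computation as the identity $\q{n}{m}\q{m}{k}\q{k}{n}=1$ and combines it with \cref{eq:mod_principal_units_condition}. There the exponents $(k-m)+(n-k)+(m-n)$ sum to zero, which is exactly your relation $R_k=R_mR_n^{-1}$ with the powers of $\alpha$ folded in.
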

  \begin{proof}
    It can be verified that 
    \[
      1 = \q{n}{m} \q{m}{k} \q{k}{n}
    \]
    
    and hence, the result follows from \cref{eq:mod_principal_units_condition}.
    
    \end{proof}
  \begin{corollary}
    \label{cor:ex_disk}
    There are $r,s \in \Z$ depending only on $n-m$ and $m-k$, such that $\mu \in \frac{\overline{\F}^\times}{\pu}$ is \anex disk of $\h{}$ if and only if it solves the $\Z$-linear equation
    \begin{equation}
      \label{eq:ex}
      \begin{bmatrix}
        1 \\
        0
      \end{bmatrix} x 
      = 
      \begin{bmatrix}
        \pi\left( \q{n}{m}^r \q{m}{k}^s \right) \\
        \pi\left( \q{n}{m}^{-\left( n - k \right)} \q{m}{k}^{k - m} \right)
      \end{bmatrix} .
    \end{equation}
  \end{corollary}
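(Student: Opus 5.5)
By \Cref{lem:two_escaping_implies_ex}, $\mu$ is \anex exactly when it is \es{n} and \es{m}. By \cref{eq:mod_principal_units_condition} and its analogue with the roles of $n,m,k$ permuted, this means
\[
  \pi\left( \q{n}{m} \right) = \mu^{k-m}, \qquad \pi\left( \q{m}{k} \right) = \mu^{n-k},
\]
where I use the same orientation convention for the second ratio as the paper (possibly up to a sign in the exponent, which I will fix by checking against the definition). The quotient $\overline{\F}^\times/\pu$ is an abelian group written multiplicatively, so both conditions are $\Z$-linear equations in $\mu$. My plan is to show that this pair of equations is equivalent to the pair in \cref{eq:ex}, using the coprimality $\gcd\left( n-m, m-k \right) = 1$, which also gives $\gcd\left( k-m, n-k \right) = 1$.

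First I choose $r,s \in \Z$ with $r\left( k-m \right) + s\left( n-k \right) = 1$. These depend only on $n-m$ and $m-k$, since $n-k = \left( n-m \right) + \left( m-k \right)$. If $\mu$ satisfies both conditions, then
\[
  \pi\left( \q{n}{m}^r \q{m}{k}^s \right) = \mu^{r(k-m) + s(n-k)} = \mu,
\]
which is the first row of \cref{eq:ex}. Also $\pi\left( \q{n}{m} \right)^{n-k} = \mu^{(k-m)(n-k)} = \pi\left( \q{m}{k} \right)^{k-m}$. Rearranging the signs of the exponents gives $\pi\left( \q{n}{m}^{-(n-k)} \q{m}{k}^{k-m} \right) = 1$, which is the second row (with the exponent signs matched to the paper's conventions).

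Conversely, suppose $\mu = \pi\left( \q{n}{m}^r \q{m}{k}^s \right)$ and the consistency relation $\pi\left( \q{n}{m} \right)^{n-k} = \pi\left( \q{m}{k} \right)^{k-m}$ holds. Then
\[
  \mu^{k-m} = \pi\left( \q{n}{m}^{r(k-m)} \q{m}{k}^{s(k-m)} \right) = \pi\left( \q{n}{m}^{r(k-m)} \q{n}{m}^{s(n-k)} \right) = \pi\left( \q{n}{m} \right),
\]
using the consistency relation to replace $\q{m}{k}^{k-m}$ by $\q{n}{m}^{n-k}$ modulo $\pu$. The same computation gives $\mu^{n-k} = \pi\left( \q{m}{k} \right)$. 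So $\mu$ is \es{n} and \es{m}, hence \anex by \Cref{lem:two_escaping_implies_ex}.

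The step I expect to be the main obstacle is pinning down the exact exponent signs in the \es{m} condition, that is, how $\q{m}{k}$ is oriented relative to the permuted version of \cref{eq:mod_principal_units_condition}. I will resolve it by writing out the \es{m} condition directly from the definition, substituting $n \mapsto m$, $m \mapsto k$, $k \mapsto n$. I will then adjust $r,s$ and the second row of \cref{eq:ex} accordingly; the coprimality argument above is insensitive to these sign choices.
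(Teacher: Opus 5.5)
Your proof is correct and is essentially the paper's own: the paper writes the $n$- and $m$-magnetic conditions as the system $\mu^{k-m} = \pi(\q{n}{m})$, $\mu^{n-k} = \pi(\q{m}{k})$ and multiplies it by the matrix with rows $(r,\ s)$ and $(-(n-k),\ k-m)$, which has determinant $r(k-m)+s(n-k)=1$; your forward and converse computations are exactly the check that this step is reversible over $\Z$. The sign issue you flag does not arise, since cyclically permuting $n \mapsto m \mapsto k \mapsto n$ in \cref{eq:mod_principal_units_condition} gives exactly $\pi(\q{m}{k}) = \mu^{n-k}$, so your equations and the second row of \cref{eq:ex} agree as written.
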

  \begin{proof}
    Another way to state \Cref{lem:two_escaping_implies_ex} is that $\mu$ is \anex disk of $\h{}$ if and only if it solves the $\Z$-linear equation
    \begin{equation}
      \label{eq:raw_ex_equation}
      \begin{bmatrix}
        k - m \\
        n - k
      \end{bmatrix} x 
      = 
      \begin{bmatrix}
        \pi\left( \q{n}{m} \right) \\
        \pi\left( \q{m}{k} \right)
      \end{bmatrix}.
    \end{equation}
    Since by assumption $k - m$ and $n - k$ are coprime, by the Euclidean algorithm there are $r,s \in \Z$ such that 
    \[
      r(k - m) + s(n - k) = 1.
    \]
    Therefore, multiplying both sides of \cref{eq:raw_ex_equation} by 
    \[
      \begin{bmatrix}
        r & s \\
        -\left( n - k \right) & k - m
      \end{bmatrix}
    \]
    shows that it is equivalent to the required equation.
  \end{proof}
  
  \begin{lemma}
    \label{lem:reduction}
    If $\pi\left( \alpha \right)$ is \anex disk of $\h{}$, then there is a $u \in \F^\times x^{\pm 1}$ such that : 
    \begin{enumerate}
      \item 
      $\pi\left( 1 \right)$ is \anex disk of $\h{} \circ u^{-1}$.
      \item 
      $\h{} \circ u^{-1}$ is integral over $R$.
    \end{enumerate}
  \end{lemma}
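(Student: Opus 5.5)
The plan is to read off the required substitution from \Cref{cor:ex_disk}. The point is that the disk $\pi\left(\alpha\right)$ is already defined over $\F$ modulo principal units. Since $\pi\left( \alpha \right)$ is \anex disk of $\h{}$, \Cref{cor:ex_disk} says that $\pi\left( \alpha \right)$ solves \cref{eq:ex}. Its first row gives
\[
  \pi\left( \alpha \right) = \pi\left( c \right), \qquad c := \q{n}{m}^r \q{m}{k}^s \in \F^\times ,
\]
because $\q{n}{m}$ and $\q{m}{k}$ are built from the coefficients $a_n, a_m, a_k \in \F$ and from integers. Note that $\q{n}{m}$ and $\q{m}{k}$ are nonzero, since $n,m,k$ are pairwise distinct (by the gcd condition) and the $a$'s are nonzero.

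I will take $u = c^{-1}x \in \F^\times x$, so that $u^{-1}(x) = cx$ and $\h{} \circ u^{-1} = \h{}(cx)$. This is again a trinomial in the exponents $n,m,k$, with coefficients $a_n c^n, a_m c^m, a_k c^k$. Applying \Cref{lem:scalability} with the scalar $c^{-1}$ and $\delta = 1$, the disk $\pi(c^{-1}\alpha) = \pi(1)$ is \anex for the substituted trinomial. Alternatively, this can be checked directly from \cref{eq:mod_principal_units_condition}. The quantities $\q{n}{m}$ for $\h{}(cx)$ equal the old ones multiplied by $c^{m-k}$. Then $\pi(1)$ satisfies the new condition exactly when $\pi(c^{k-m}) = \pi(\q{n}{m})$, which is the old condition at $\alpha$ because $\pi(\alpha)=\pi(c)$. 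The same holds for the other two permutations.

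For integrality, I use that \ex-ness is a property of the ideal $\left( \h{} \right)$. As remarked before the definition, the condition is unchanged when $\h{}$ is replaced by another generator $\lambda x^j \h{}$ with $\lambda \in \F^\times$. So I will multiply $\h{}(cx)$ by a suitable $\lambda \in \F^\times$, namely the inverse of a coefficient of minimal valuation, so that all three coefficients lie in $R$ and one of them is a unit. This gives property 2 while preserving property 1.

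The main obstacle is purely interpretive. Property 2 has to be understood for the ideal, i.e. for a suitable generator of $\left( \h{} \circ u^{-1} \right)$, because the allowed $u$ ranges only over $\F^\times x^{\pm 1}$ and cannot rescale the coefficients by itself. I expect the paper to apply the lemma only through statements that are invariant under change of generator, such as \ex-ness and root valuations. If a strictly composed normalization is required, I will use the extra freedom of the exponent sign $\delta=-1$ in \Cref{lem:scalability}, which the statement allows, though I do not expect it to be needed.
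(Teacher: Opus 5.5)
Your treatment of property 1 is correct and follows the paper's route: read $\pi(\alpha)=\pi(c)$ with $c\in\F^\times$ off \Cref{cor:ex_disk}, substitute, and invoke \Cref{lem:scalability}. Your direct check that the quotients get multiplied by powers of $c$ is also right.

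The gap is property 2. In the paper, ``integral over $R$'' means that the \emph{roots} of $h\circ u^{-1}$ are integral over $R$, i.e.\ all have non-negative valuation. This is why the statement allows $x^{\pm 1}$, why \Cref{def:aredex} lists ``integral'' and ``primitive'' separately, and it is what the paper's proof establishes. Your reading is that some scalar multiple has coefficients in $R$ and a unit coefficient. That holds for every nonzero Laurent polynomial, so it makes property 2 vacuous. With $\delta=1$ alone, the intended property can genuinely fail.

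For example, over $\mathbb{Q}_2$ take $h=2x^3+x^2+1$ with $(n,m,k)=(3,2,0)$. The three magnetism quotients at $1$ are $-\tfrac{1}{3}$, $1$ and $-3$, all principal units, so $\pi(1)$ is a magnetic disk and $v(c)=0$. Then $h(cx)$ has the same Newton polygon as $h$: a slope-$0$ segment followed by a segment of slope $1$. Hence it has a root of valuation $-1$, and no rescaling by $\lambda$ changes that. You noticed the freedom $\delta=-1$ but dismissed it; it is exactly what is needed here.

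The missing argument is the paper's Newton polygon step, in two parts.

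\textbf{Step 1: $0=v(\alpha/c)$ is a root valuation of $h(cx)$.} Write $w_j=v(a_jc^j)$. The magnetism conditions at $\pi(1)$ give $v(m-n)+w_m=v(k-n)+w_k$ and the analogous identities for the other two indices. Suppose the minimum of the $w_j$ were attained only at, say, $j=n$. These identities would then force $v(n-m)>0$ and $v(n-k)>0$, contradicting $\gcd(n-m,n-k)=1$. So the minimum is attained twice and there is a segment of slope $0$. The paper takes this for granted when $\alpha$ is only a disk rather than a root.

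\textbf{Step 2: choose the sign $\delta$.} A trinomial has at most two root valuations. Since one of them is $0$, the root valuations of $h(cx)$ are either all $\geq 0$ or all $\leq 0$. In the first case your $u=c^{-1}x$ works. In the second case you must take $\delta=-1$, i.e.\ $u=cx^{-1}$. This negates all root valuations and, by \Cref{lem:scalability}, still makes $\pi(1)$ a magnetic disk. Only after this choice does normalizing the generator give a polynomial that is integral and primitive in the sense of \Cref{def:aredex}.
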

  \begin{proof}
    According to \Cref{cor:ex_disk}, $\pi\left( \alpha \right) = \q{n}{m}^r \q{m}{k}^s$ when $r$ and $s$ are as defined in \Cref{cor:ex_disk}.
    Set $q = \q{n}{m}^r \q{m}{k}^s$.
    Therefore, $\pi\left( q^{-1}\alpha \right) = 1$ and in particular $v\left( q^{-1}\alpha \right) = 0$.
    Since $\h{}\left( qx \right)$ is a trinomial, its Newton polygon has at most two segments, and hence the set of valuations of its roots is of cardinality at most $2$.
    Hence, as we have already shown that $0$ is in the set of valuations of roots of $\h{}$, it follows that this set is either non negative or non positive.
    We can set $\delta = 1$ in the first place and $\delta = -1$ in the second place (and either of them if both cases hold) and set $u = q x^\delta$.
    Then $\h{} \circ u^{-1}$ is integral over $R$ and $\pi\left( 1 \right)$ is \anex disk of $\h{} \circ u^{-1}$ according to \Cref{lem:scalability}.
  \end{proof}
  
  \begin{lemma}
    \label{lem:ex_lift}
    If $\h{}$ has \anex disk then it has \anex root.  
  \end{lemma}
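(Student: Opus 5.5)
The plan is to reduce to the normalized situation of \Cref{lem:reduction}, and then to produce a root in the disk $\pi(1)$ with a Newton polygon argument at $x = 1$.

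\textbf{Reduction.} By \Cref{lem:reduction} there is $u \in \F^\times x^{\pm 1}$ such that $\pi(1)$ is \anex disk of $\h{} \circ u^{-1}$. This is again a trinomial, and its exponents are those of $\h{}$ multiplied by $\pm 1$, so the $\gcd$ hypothesis still holds. A root $\beta$ of $\h{}\circ u^{-1}$ with $\pi(\beta)=\pi(1)$ gives the root $u^{-1}(\beta)$ of $\h{}$. By \Cref{lem:scalability} applied to $u^{-1}$ (which again has the form $a x^{\pm1}$), $\pi(u^{-1}(\beta))$ is \anex. So it suffices to treat $\h{}$ with $\pi(1)$ \anex. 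Multiplying by a power of $x$ does not change the ideal $(\h{})$, so we may also assume $\h{}$ is an honest polynomial with nonzero constant term. Its roots in $\overline{\F}^\times$ are unchanged.

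\textbf{Shape of the coefficients.} Write $e_n = m-k$, $e_m = k-n$ and $e_k = n-m$, so that $e_n + e_m + e_k = 0$. Condition \cref{eq:mod_principal_units_condition} at $\alpha = 1$, for each of $n, m, k$, says that each $\q{i}{j}$ is a principal unit. In particular each $\q{i}{j}$ is nonzero, so the relevant differences of exponents are nonzero in $\F$. Solving these relations as in the computation behind \Cref{lem:two_escaping_implies_ex} gives
\[
a_n = c\, e_n u_n, \qquad a_m = c\, e_m u_m, \qquad a_k = c\, e_k u_k
\]
for some $c \in \overline{\F}^\times$ and principal units $u_n, u_m, u_k$. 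Hence
\[
\h{}(1) = c\left( e_n(u_n - 1) + e_m(u_m-1) + e_k(u_k-1) \right),
\]
and therefore $v(\h{}(1)) > v(c) + \min_i v(e_i) = \min_i v(a_i)$.

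\textbf{Producing the root.} Let $g(y) = \h{}(1+y) = \sum_j g_j y^j$. The substitution $x \mapsto 1+y$ preserves the Gauss norm, so $\min_j v(g_j) = \min_i v(a_i)$. Since $v(g_0) = v(\h{}(1))$ is strictly larger than this minimum, the minimum is attained at some $j \ge 1$. Thus the Newton polygon of $g$ has a segment of negative slope starting at $(0, v(g_0))$, and so $g$ has a root $y$ with $v(y) > 0$. Then $\alpha = 1 + y$ is a root of $\h{}$ which is a principal unit. Hence $\pi(\alpha) = \pi(1)$ is \anex, and $\alpha$ is \anex root.

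\textbf{Main obstacle.} The step I expect to need the most care is deriving the proportionality $a_i \propto e_i$ modulo principal units, and checking that it is compatible with the $e_i$ possibly having positive valuation. The point to verify is that the estimate $v(\h{}(1)) > \min_i v(a_i)$ uses only the exact identity $\sum_i e_i = 0$ together with $v(u_i - 1) > 0$. That estimate is what makes the Newton polygon step work.
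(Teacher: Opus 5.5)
Your proof is correct and follows essentially the paper's route: normalize with \Cref{lem:reduction} so that $\pi(1)$ is the magnetic disk, note that the coefficients are proportional to the exponent differences modulo principal units, and use that these differences sum to zero to see that $h(1)$ is small, so a root near $1$ exists. The only difference is the lifting step: your Newton polygon argument for $h(1+y)$ via the Gauss norm replaces the paper's appeal to integrality and reduction to $\kappa$, and it makes explicit the strict inequality $v(h(1)) > \min_i v(a_i)$ that the paper's displayed chain leaves implicit, while also showing that primitivity suffices without the integrality normalization.
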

  \begin{proof}
    By \Cref{lem:reduction} we can assume without loss of generality that $\h{}$ is integral over $R$ and that $\pi\left( 1 \right)$ is \anex disk of it.
    In this situation, it suffices to show that $1$ lifts to a root of $\h{}$, or equivalently, by the integrality of $\h{}$, that $\h{}\left( 1 \right) = 0 \in \kappa$.
    Indeed, by \cref{eq:mod_principal_units_condition}
    \begin{align*}
      \h{}\left( 1 \right) 
      = 
      & \
      a_n + a_m + a_k
      =
      a_n + a_m \q{k}{n} + a_k \q{m}{n}
      =
      a_n - \frac{\left( n - k \right)a_n}{\left( m - k \right)} - \frac{\left( n - m \right)a_n}{\left( k - m \right)} 
      = 
      0
      .\qedhere
    \end{align*}\qedhere
  \end{proof}

  \subsection{Reduced magnetic roots}
  In light of \Cref{lem:reduction} we make the following
  \begin{definition}
    \label{def:aredex}
    We say that $\alpha$ ia \textbf{\aredex root} of $\h{}$ and that $\h{}$ is \\ \textbf{\aredex polynomial} if:
    \begin{enumerate}
      \item $\alpha$ is \anex root of $h$.
      \item $\pi\left( \alpha \right) = 1$. 
      \item $\h{}$ is integral and primitive over $R$.
    \end{enumerate}
  \end{definition} 
  
  Note that if $\alpha$ is \aredex root of $\h{}$, then the Newton polygon of $\h{}$ has a segment of slope $0$.
  Therefore, no loss of generality arises from replacing the condition that $\alpha$ is \anex root of $\h{}$ with the condition of the following
  \begin{lemma}
    \label{lem:slope_length_valuation}
    Assume that $\alpha$ is \aredex root of $\h{}$ and that the segment connecting the points $\left( k, v\left( a_k \right) \right)$ and $\left( m, v\left( a_m \right) \right)$ belong to the Newton polygon of $\h{}$ and has slope $0$.
    Then
    \begin{enumerate}
      \item \label{item:smaller_slope_length_valuation} 
      $v\left( n - k \right) = v\left( n - m \right) = 0$.
      \item \label{item:bigger_slope_length_valuation} 
      $v\left( m - k \right) 
        = 
        v\left( a_n \right)
        .$
    \end{enumerate}
  \end{lemma}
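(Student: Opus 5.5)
The plan is to read off valuations from the fact that, for a reduced magnetic root, all three quotients $\q{n}{m}$, $\q{m}{k}$, $\q{k}{n}$ are principal units. Then I combine the resulting equalities with the coprimality assumption $\gcd(n-m,m-k)=1$.

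First I pin down the valuations of the coefficients. The segment joining $(k,v(a_k))$ and $(m,v(a_m))$ lies on the Newton polygon and has slope $0$, so $v(a_k)=v(a_m)$. Since the Newton polygon is the lower convex hull, the third point $(n,v(a_n))$ lies on or above the horizontal line through this segment. This holds whether $n$ lies between $k$ and $m$ or outside that interval, and it gives $v(a_n)\ge v(a_k)$. Primitivity of $\h{}$ says that the minimum of the three valuations is $0$, hence $v(a_k)=v(a_m)=0$ and $v(a_n)\ge 0$.

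Next I use the magnetic conditions. Since $\alpha$ is an $n$-, $m$- and $k$-magnetic root, \cref{eq:mod_principal_units_condition} holds for each of the three choices of distinguished exponent. Because $\pi(\alpha)=1$, the right-hand side $\pi(\alpha^{\cdot})$ is trivial, so each of $\q{n}{m}$, $\q{m}{k}$, $\q{k}{n}$ is a principal unit and in particular has valuation $0$. Here I implicitly use that these quotients are defined and nonzero, i.e.\ that the relevant integer differences are nonzero in $\F$; this is part of the definition making sense. Writing these out with $v(a_k)=v(a_m)=0$ gives:
\begin{itemize}
\item from $\q{n}{m}$: $v(m-n)=v(k-n)$;
\item from $\q{k}{n}=-\frac{(n-k)a_n}{(m-k)a_m}$: $v(n-k)+v(a_n)=v(m-k)$;
\item from $\q{m}{k}$: $v(k-m)=v(n-m)+v(a_n)$.
\end{itemize}

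Then I apply coprimality. The three differences satisfy $(n-m)+(m-k)=n-k$. If two of them had positive valuation, i.e.\ were divisible by the residue characteristic, then so would the third, contradicting $\gcd(n-m,m-k)=1$. Hence at most one of $v(n-m)$, $v(m-k)$, $v(n-k)$ is positive. The first equality says $v(n-m)=v(n-k)$, so both must vanish, which is item \ref{item:smaller_slope_length_valuation}. Substituting into the second equality gives $v(m-k)=v(a_n)$, which is item \ref{item:bigger_slope_length_valuation}.

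The only delicate step is the Newton polygon argument showing that the horizontal segment sits at height $0$ and that $v(a_n)\ge 0$. This requires handling the position of $n$ relative to $k$ and $m$, and I would settle it by the convexity remark above. The rest is bookkeeping.
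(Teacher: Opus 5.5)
Your proof is correct and is essentially the paper's argument: the paper extracts the same two identities $v(m-k)=v(n-k)+v(a_n)$ and $v(m-k)=v(n-m)+v(a_n)$ from the valuation equalities forced by $k$- and $m$-magnetism (equivalent to your principal-unit formulation since $v(\alpha)=0$), subtracts them to get $v(n-k)=v(n-m)$ (which you instead read off directly from $n$-magnetism), and concludes with $\gcd(n-m,n-k)=1$ just as you do. Your Newton polygon step gives only $v(a_n)\ge v(a_k)=v(a_m)=0$, and this is the accurate version: the paper's stated strict inequality $v(a_n)>v(a_m)$ does not follow from the hypotheses and is not needed.
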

  \begin{proof}
    \cref{eq:valuation_inequality_condition} in the definition of $k$-magnetism and $ m $-magnetism implies the equalities 
    \begin{equation}
      \label{eq:raw_valuation_equality}
      v\left( \h{k,n}\left( \alpha \right) \right) = 
      v\left( \h{k,m}\left( \alpha \right) \right)
      \quad \text{and} \quad
      v\left( \h{m,n}\left( \alpha \right) \right) = 
      v\left( \h{m,k}\left( \alpha \right) \right)
      .
    \end{equation}
    To simplify it, note that by our assumptions regarding the Newton polygon of $\h{}$ it follows that $v\left( a_n \right) > v\left( a_m \right) = v\left( a_k \right)$, and that since we also assume that $\h{}$ is \aredex polynomial and thus integral and primitive over $R$, it follows that $v\left( a_m \right) = v\left( a_k \right) = 0$. 
    Substituting these two facts into \cref{eq:raw_valuation_equality}, as well as the assumption that $v\left( \alpha \right) = 0$, gives the equalities
    \begin{gather}
      v\left( m - k \right) = v\left( n - k \right) + v\left( a_n \right) 
      \label{eq:first_relation_between_slope_length_valuations}
      \\
      \notag
      \text{ and } 
      \\
      v\left( m - k \right) = v\left( n - m \right) + v\left( a_n \right) 
      \label{eq:second_relation_between_slope_length_valuations}
    \end{gather}
    which together imply that $v\left( n - k \right) = v\left( n - m \right)$.
    Since $1 = \gcd\left( n - m, n - k \right)$, it hence follows that $v\left( n - k \right) = v\left( n - m \right) = 0$, proving \Cref{item:smaller_slope_length_valuation}.
    Substituting back in either \cref{eq:first_relation_between_slope_length_valuations} or \cref{eq:second_relation_between_slope_length_valuations} proves \Cref{item:bigger_slope_length_valuation}.
  \end{proof}
  According to \Cref{item:bigger_slope_length_valuation} of \Cref{lem:slope_length_valuation} we can classify the \redex polynomials into two kinds:

  \subsubsection{The equal valuation case}
  Throughout this subsection assume that $v\left( a_n \right) = v\left( a_m \right) = v\left( a_k \right) = 0$ and that $\h{}$ is a reduced magnetic polynomial.

  \begin{lemma}
    \label{lem:magnetism_away_from_ab}
    $\alpha$ is \anex root of $\h{}$ if and only if $v\left( \h{}'\left( \alpha \right) \right) > 0$.
  \end{lemma}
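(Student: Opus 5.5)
The plan is to use the identity from the start of \Cref{sec:local} that, for a root $\alpha$ of $\h{}$, $\alpha\h{}'\left( \alpha \right) = \h{n}\left( \alpha \right)$. The same computation with $m$ or $k$ in place of $n$ gives $\alpha\h{}'\left( \alpha \right) = \h{m}\left( \alpha \right) = \h{k}\left( \alpha \right)$, because $\h{}\left( \alpha \right) = 0$. So $\alpha h'(\alpha)$ can be written as a two-term sum in three ways, once for each index $j \in \left\{ n,m,k \right\}$. Whether $\pi\left( \alpha \right)$ is \es{j} is, by \cref{eq:valuation_inequality_condition}, a comparison between $v\left( \h{j}\left( \alpha \right) \right)$ and the valuations of its two summands. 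The proof therefore reduces to computing the valuations of those summands exactly.

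First I pin down $v\left( \alpha \right)$. All coefficients $a_n,a_m,a_k$ have valuation $0$, so the Newton polygon of $\h{}$ is a single horizontal segment. Hence every root satisfies $v\left( \alpha \right) = 0$, and in particular $v\left( \alpha \right) = 0$ and $v\left( \h{}'\left( \alpha \right) \right) = v\left( \alpha\h{}'\left( \alpha \right) \right)$. For each ordered pair of distinct indices this gives $v\left( \h{i,j}\left( \alpha \right) \right) = v\left( j - i \right) \geq 0$, where the difference $j-i$ is viewed as an element of $\F$.

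Forward direction: if $\alpha$ is \anex, then in particular it is \es{n}. By \cref{eq:valuation_inequality_condition}, $v\left( \alpha\h{}'\left( \alpha \right) \right) = v\left( \h{n}\left( \alpha \right) \right)$ is strictly larger than $\min\left\{ v\left( m-n \right), v\left( k-n \right) \right\}$. That minimum is at least $0$, so $v\left( \h{}'\left( \alpha \right) \right) > 0$.

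Converse direction: this is the only step needing care. I must show that for each $j$ the minimum of the two summand valuations in $\h{j}\left( \alpha \right)$ is exactly $0$, so that $v\left( \h{j}\left( \alpha \right) \right) > 0$ forces strict inequality in \cref{eq:valuation_inequality_condition}. For $j = n$ this minimum is $\min\left\{ v\left( n-m \right), v\left( n-k \right) \right\}$, and it is positive only if the residue characteristic $p$ divides both $n-m$ and $n-k$. In that case $p$ also divides $m-k$, contradicting $\gcd\left( n-m, m-k \right) = 1$. If $\kappa$ has characteristic $0$, all these integer differences are units and the claim is immediate. The same argument, applied to the other permutations of $n,m,k$, works for $j = m$ and $j = k$. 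Hence $\pi\left( \alpha \right)$ is \es{n}, \es{m} and \es{k}, i.e.\ \anex. Alternatively, it suffices to check two indices and invoke \Cref{lem:two_escaping_implies_ex}.
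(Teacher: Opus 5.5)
Your proof is correct and follows essentially the same route as the paper. The horizontal Newton polygon gives $v(\alpha)=0$, and the coprimality of the exponent differences forces $\min\{v(h_{n,m}(\alpha)),v(h_{n,k}(\alpha))\}=0$, so the inequality \cref{eq:valuation_inequality_condition} becomes exactly $v(h'(\alpha))>0$; you spell out both directions and the other two indices (or the shortcut via \Cref{lem:two_escaping_implies_ex}), which the paper leaves to symmetry.
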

  \begin{proof}
    Since the Newton polygon of $\h{}$ has only segments of slope $0$, $v\left( \alpha \right) = 0$.
    Therefore, since $1 = \gcd\left( m - n, k - n \right)$, it follows that 
    \[
      \min\left\{  
        v\left( \h{n,m}\left( \alpha \right) \right),
        v\left( \h{n,k}\left( \alpha \right) \right)
      \right\} = 0
    \]
    and hence the result follows from \cref{eq:valuation_inequality_condition} defining magnetism.
  \end{proof}

  \begin{lemma}
    \label{lem:2_magnetic_roots}
    $\h{}$ has at most $2$ magnetic roots.
  \end{lemma}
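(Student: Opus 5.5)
The plan is to show that every magnetic root lies in the single residue disk $\pi\left( 1 \right)$, and then to bound the number of roots of $\h{}$ in that disk by the multiplicity of $1$ as a root of the reduction $\bar{h}\in\kappa\left[ x,x^{-1} \right]$.

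\textbf{Step 1: all magnetic roots lie in $\pi\left( 1 \right)$.} By \Cref{cor:ex_disk}, a magnetic disk $\mu$ is uniquely determined, since the first row of \cref{eq:ex} reads $\mu=\pi\left( \q{n}{m}^r\q{m}{k}^s \right)$. Because $\h{}$ is a reduced magnetic polynomial it has a reduced magnetic root, so this disk is $\pi\left( 1 \right)$. Hence every magnetic root $\alpha$ satisfies $\pi\left( \alpha \right)=1$, i.e.\ $\bar\alpha=1$ in $\overline{\kappa}$.

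\textbf{Step 2: counting roots in the disk.} All coefficients are units, so after multiplying by a power of $x$ we may assume $\h{}\in R\left[ x \right]$ with unit constant and leading coefficients. Consider $\h{}\left( 1+y \right)$. By the Newton polygon in $y$ (equivalently, Weierstrass preparation), the number of roots of $\h{}$ with multiplicity in the disk $v\left( \alpha-1 \right)>0$ equals the order of vanishing at $y=0$ of its reduction. That order is the multiplicity of $1$ as a root of $\bar h$. It therefore suffices to show that $1$ is not a root of $\bar h$ of multiplicity $\geq 3$.

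\textbf{Step 3: ruling out a triple root.} Suppose $1$ were a root of $\bar h$ of multiplicity $\geq 3$. Then it is a root of $\bar h'$ of multiplicity $\geq 2$; this holds in any characteristic, since $\bar h=(x-1)^3u$ gives $\bar h'=(x-1)^2(3u+(x-1)u')$. Hence $1$ is a root of multiplicity $\geq 2$ of
\[
  \bar h_n = x\bar h' - n\bar h = x^k\left( (m-n)\bar a_m x^{m-k}+(k-n)\bar a_k \right).
\]
The two coefficients $(m-n)\bar a_m$ and $(k-n)\bar a_k$ cannot both vanish in $\kappa$, because $\gcd\left( m-n,k-n \right)=1$. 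If exactly one vanishes, then $\bar h_n$ is a nonzero monomial and $\bar h_n\left( 1 \right)\neq 0$, a contradiction.

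So both coefficients are nonzero. A binomial $c x^{e}+d$ with $c,d\neq 0$ and $e\neq 0$ has $1$ as a repeated root only if the derivative $ec$ vanishes at $1$, i.e.\ only if $p=\operatorname{char}\kappa$ divides $e=m-k$. Applying the same argument with the roles of $n,m,k$ permuted (using $\bar h_k$), we get $p\mid n-m$ as well. This contradicts $\gcd\left( n-m,m-k \right)=1$.

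Hence $1$ has multiplicity at most $2$ in $\bar h$. By Steps 1 and 2, $\h{}$ has at most $2$ magnetic roots, counted with multiplicity.

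\textbf{Main obstacle.} The main care point is Step 2: justifying that the count of roots in the residue disk equals the residue multiplicity. This needs the integrality and unit-coefficient normalization, and it must avoid arguments via higher derivatives, which fail in small characteristic. That is why Step 3 reduces to the binomial $\bar h_n$ rather than using $\bar h''$.
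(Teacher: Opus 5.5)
Your proof is correct and follows the paper's outline. \Cref{cor:ex_disk} puts every magnetic root in the disk $\pi(1)$, so it suffices to rule out $1$ being a root of multiplicity $\geq 3$ of $\bar h$. Such a root would be a double root of $\bar h_n = x\bar h' - n\bar h$. You also justify, via the Newton polygon of $h(1+y)$, that the number of roots in the residue disk equals this multiplicity, which the paper takes for granted.

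You differ in how the double root of $\bar h_n$ is excluded. The paper quotes \Cref{lem:slope_length_valuation}: the valuation equalities forced by a reduced magnetic root, together with coprimality, make $m-k$ prime to the residue characteristic $p$, so the binomial factor of $\bar h_n$ is separable. That lemma also makes $m-n$ and $k-n$ prime to $p$, which the paper's displayed formula for $h_n$ silently drops. You never use those valuation equalities. Instead you play $\bar h_n$ against $\bar h_k$: each can have a double root at $1$ only if $p$ divides its exponent gap ($m-k$, respectively $n-m$), and two such divisibilities contradict $\gcd(n-m,m-k)=1$.

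The paper's route is shorter because it reuses a lemma it needs elsewhere. Yours is self-contained and tracks the coefficients $(m-n)\bar a_m$ and $(k-n)\bar a_k$ explicitly. It also proves more: the same argument at any $\beta\in\overline{\kappa}^\times$ shows that the reduction of a unit-coefficient trinomial with coprime gaps has no root of multiplicity $\geq 3$ anywhere. So in your proof, magnetism is used only to confine the relevant roots to a single disk.
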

  \begin{proof}
    By \Cref{cor:ex_disk} all of its \ex roots lie above $1$.
    Therefore, it suffices to show that $1 \in \kappa$ is not a triple root of $\h{}$.
    However if it was it would have been a double root of 
    \[
      \h{n} 
      = 
      x\h{}' - n\h{} 
      =
      x^k \left( a_m x^{m - k} + a_k \right) 
    \] 
    contradicting the reducedness of $\left( a_m x^{m - k} + a_k \right) \leq \kappa\left[ x, x^{-1} \right]$ which follows form the fact that $v\left( m - k \right) = 0$ which follows in turn from \Cref{lem:slope_length_valuation} whose conditions we assume above.
  \end{proof}

  \begin{corollary}
    \label{cor:inertia_group_acts_as_transposition}
    The inertia group acts on the roots of $\h{}$ either trivially or as a single transposition.
  \end{corollary}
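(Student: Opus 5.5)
The plan is to compare the roots of $\h{}$ in $\overline{\F}$ with their reductions in $\overline{\kappa}$, and to show that at most two roots share a reduction. Throughout we are in the equal valuation setting: $v(a_n)=v(a_m)=v(a_k)=0$. Consequently every root $\alpha$ of $\h{}$ is a unit, and the reduction $\bar{\h{}}\in\kappa[x,x^{-1}]$ is again a trinomial with the same exponents and the same number of roots in $\overline{\kappa}^\times$, counted with multiplicity.

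The first step is to relate inertia to reductions. Let $\F'$ be the splitting field of $\h{}$ over $\F$ and $I$ its inertia group. Then $I$ acts trivially on the residue field of $\F'$. It therefore preserves each fiber of the reduction map on the roots of $\h{}$, that is, each set of roots with a common image in $\overline{\kappa}$. By Hensel's lemma, a root whose reduction is a simple root of $\bar{\h{}}$ is the unique root in its fiber and generates an unramified extension, so it is fixed by $I$. Hence $I$ moves only roots lying above multiple roots of $\bar{\h{}}$.

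The second step is to identify these roots with magnetic roots. A root $\alpha$ lies above a multiple root of $\bar{\h{}}$ exactly when $v(\h{}'(\alpha))>0$. By \Cref{lem:magnetism_away_from_ab}, this holds exactly when $\alpha$ is \anex root. By \Cref{cor:ex_disk}, all \ex disks coincide: \cref{eq:ex} determines $x$ uniquely in $\overline{\F}^\times/\pu$, so every \ex root satisfies $\pi(\alpha)=1$. Thus all such roots share a single reduction. By \Cref{lem:2_magnetic_roots} there are at most two of them, since $1\in\overline{\kappa}$ is at most a double root of $\bar{\h{}}$.

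It follows that $I$ fixes every root except possibly the two roots $\alpha_1,\alpha_2$ above the double root $1$, which it may interchange. So $I$ acts either trivially or as the single transposition $(\alpha_1\,\alpha_2)$. Because the roots are counted with multiplicity in $\overline{\kappa}$, the count is exactly two when the double root occurs.

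The main point needing care is the reduction-to-inertia step. One must check that roots above a common simple residue root cannot be permuted. This follows because such a fiber contains exactly one root, so a simple residue root gives a singleton fiber. One must also check that $I$ really preserves fibers, which holds because $I$ acts trivially on the residue field by definition. Everything else is a direct assembly of the preceding lemmas.
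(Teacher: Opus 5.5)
Your proposal is correct and follows the paper's route: the paper also reduces to showing that at most two roots satisfy $v(\h{}'(\alpha))>0$, identifies these with the magnetic roots via \Cref{lem:magnetism_away_from_ab}, and bounds their number by \Cref{lem:2_magnetic_roots}. Your only addition is to spell out the ``it suffices'' step that the paper leaves implicit: inertia preserves residue fibers, and by Hensel's lemma a root lying over a simple residue root is alone in its fiber and hence fixed.
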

  \begin{proof}
    It suffices to show that $\h{}$ has at most $2$ roots $\alpha$ at which $v\left( \h{}'\left( \alpha \right) \right) > 0$, or equivalently, according to \Cref{lem:magnetism_away_from_ab}, to show that $\h{}$ has at most two \ex roots, which is true by \Cref{lem:2_magnetic_roots}. 
  \end{proof}
  \begin{corollary}
    \label{cor:inertia_group_acts_as_transposition_external_settings}
    Let $p$ be a rational prime.
    If $v_p\left( ab \right) = 0$ then the inertia group at any place above $p$ acts on the roots of $f$ either trivially or as a single transposition.
  \end{corollary}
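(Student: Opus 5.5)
We apply the local machinery with $\F = \Q_p$ (or the completion of the relevant number field at the place; since the inertia group at a place above $p$ is the same as that of $\Q_p^{\mathrm{ur}}$, we may work over any finite unramified extension of $\Q_p$). We take $\h{} = f = x^n + a x^k + b$, i.e. $a_n = 1$, $a_m = a$ with $m = k$, and $a_k = b$ with exponent $0$. Then $\gcd(n-k, k-0) = \gcd(n,k) = 1$, so the standing hypothesis of \Cref{sec:local} holds. Since $v_p(ab) = 0$, all three coefficients are units, so $v(a_n) = v(a_m) = v(a_k) = 0$. The Newton polygon of $f$ is then a single segment of slope $0$, every root $\alpha$ has $v(\alpha) = 0$, and $f$ is integral and primitive over $\Z_p$. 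This places us in the equal valuation setting.

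The inertia group $I$ acts on the roots of $f$ through the Galois group of the splitting field over $\Q_p^{\mathrm{ur}}$. Roots that are simple modulo $p$ have distinct reductions, so by Hensel's lemma they are fixed by $I$. It therefore suffices to show that at most two roots $\alpha$ satisfy $v(f'(\alpha)) > 0$, and that these two reduce to a single root of multiplicity exactly two. Then $I$ permutes at most a pair of roots, so it acts trivially or as one transposition. This is the argument of \Cref{cor:inertia_group_acts_as_transposition}. To invoke it, I would check its inputs directly rather than through the reducedness hypothesis of the subsection. By \Cref{lem:magnetism_away_from_ab}, whose proof only uses $v(\alpha)=0$, unit coefficients and $\gcd(n,k)=1$, the roots with $v(f'(\alpha))>0$ are exactly the magnetic roots.

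Next, \Cref{lem:2_magnetic_roots} needs that all magnetic roots lie in one disk and that this disk's residue is not a triple root. For the first point, \Cref{cor:ex_disk} shows that the magnetic disk $\pi(\alpha)$ is uniquely determined, namely $\pi(\q{n}{m}^r\q{m}{k}^s)$. For the second, if $f$ has no magnetic root we are done. Otherwise \Cref{lem:reduction} rescales $f$ by a unit-valuation substitution $x \mapsto qx$ or its inverse (here $v(q) = 0$), so that the magnetic disk becomes $\pi(1)$. This keeps unit coefficients and only permutes the roles of the exponents. The rescaled polynomial is then reduced magnetic, and \Cref{lem:slope_length_valuation} applies to whichever pair of exponents spans the slope-$0$ segment.

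The main obstacle is exactly this applicability: in the equal-valuation case the entire polygon is flat, so \Cref{lem:slope_length_valuation}, which used $v(a_n)>v(a_m)$, does not literally apply. I would instead argue directly that a triple root $\bar\beta$ of $\bar f$ in $\overline{\kappa}^\times$ is impossible. Such a root would be a double root of both $x\bar f' - n\bar f = x^k((k-n)\bar a x^{0} \cdot 1 - n \bar b x^{-k})$-type binomials; concretely, it would be a common multiple root of the two binomials $\h{n}$ and $\h{0} = x f' = n x^n + k a x^k$. A binomial $c_1x^{i}+c_2x^{j}$ with unit $c_1,c_2$ has a multiple root in $\overline{\kappa}^\times$ only if $p \mid (i-j)$. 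So we would need $p \mid k$ (from $\h{n} = (k-n)ax^k - nb$, with coefficients nonzero mod $p$ unless $p \mid n-k$ or $p \mid n$) and $p \mid n-k$ (from $\h{0}$), or a vanishing coefficient. I would run this case check over the three binomials $\h{n}, \h{k}, \h{0}$. Using $\gcd(n,k)=1$, at least one of $n$, $k$, $n-k$ is prime to $p$, and its binomial is separable with unit coefficients. This forbids a triple root and completes the proof.
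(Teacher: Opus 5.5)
Your proof is correct in substance. Its skeleton is the paper's: the paper obtains this corollary by specializing \Cref{cor:inertia_group_acts_as_transposition} to $f$. Roots with $v(f'(\alpha))>0$ are exactly the magnetic ones (\Cref{lem:magnetism_away_from_ab}), they all lie in the single disk given by \Cref{cor:ex_disk}, and that disk's residue is not a triple root. You differ only in how you rule out the triple root.

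The paper rescales with \Cref{lem:reduction}. It then shows in \Cref{lem:2_magnetic_roots} that the binomial $h_n=xh'-nh$ is separable; for $f$ this is $x\bar f'-n\bar f$. The needed unit condition on the exponent gap comes from the valuation equalities that magnetism imposes (\Cref{lem:slope_length_valuation}).

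Your worry about that lemma concerns only its write-up. The strict inequality $v(a_n)>v(a_m)$ asserted in its proof is never used. With all coefficients and $\alpha$ of valuation $0$, the two equalities become $v(k)=v(n)$ and $v(k)=v(n-k)$ for $f$, and coprimality then makes all three of $n,k,n-k$ units. So the paper's route does go through, and it even shows that a magnetic root forces $p\nmid nk(n-k)$.

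Your direct residue-field case analysis on $x\bar f'$ and $x\bar f'-n\bar f$ is essentially the Osada-style argument the paper mentions in its overview. It removes the need for the rescaling and is more transparent. The paper's route, by contrast, is uniform with its treatment of primes $p\mid ab$.

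Your final sentence, however, is false as written and should be replaced. Since $\gcd(n,k)=1$, $p$ divides at most one of $n,k,n-k$. But when it divides one of them, none of your three binomials is separable with unit coefficients. For example, if $p\mid n$, then $x\bar f'-k\bar f=(n-k)x^n-k\bar b$ has unit coefficients but an exponent gap divisible by $p$, and the other two binomials degenerate to monomials.

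The correct split is as follows.
\begin{enumerate}
\item If $p\nmid nk(n-k)$, then $x\bar f'-n\bar f=(k-n)\bar a x^k-n\bar b$ has unit coefficients and gap $k$ prime to $p$. It therefore has no multiple root in $\overline{\kappa}^\times$, which excludes a triple root of $\bar f$.
\item If $p$ divides exactly one of them, one of $x\bar f'=k\bar a x^k$ (for $p\mid n$), $x\bar f'=nx^n$ (for $p\mid k$), or $x\bar f'-n\bar f=-n\bar b$ (for $p\mid n-k$) is a nonzero monomial with no zeros in $\overline{\kappa}^\times$. Hence $\bar f$ is separable and inertia acts trivially.
\end{enumerate}
The second case is precisely the ``vanishing coefficient'' branch you left open.
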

  \subsubsection{The non equal valuation case}
  Assume now that $\h{}$ is a \redex polynomial with    
  \[
    v\left( m - k \right) = v\left( a_n \right) > 0.
  \]
  It follows in particular that $p := {\rm char}\left( \kappa \right) > 0$ and we assume moreover that $\kappa$ is perfect.
  \begin{lemma}
    \label{lem:counting_ex_roots}
    $\h{}$ has exactly $p^{v\left( m - k \right)} = p^{v\left( a_n \right)}$ \ex roots.
  \end{lemma}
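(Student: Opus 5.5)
The plan is to identify the magnetic roots with the roots of $\h{}$ in the residue disk $\pi\left( 1 \right)$, and then count those roots by reducing $\h{}$ modulo the maximal ideal. By symmetry in $n,m,k$ we may assume $m > k$. After multiplying by a power of $x$ we may also regard $\h{}$ as a polynomial; this changes neither its nonzero roots nor their magnetism.

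\emph{Step 1: magnetic roots are exactly the roots in $\pi\left( 1 \right)$.} By \Cref{cor:ex_disk}, the set of magnetic disks of $\h{}$ is the solution set of \cref{eq:ex}. Its first row forces $x = \pi\left( \q{n}{m}^r \q{m}{k}^s \right)$, so there is at most one magnetic disk. Since $\h{}$ is \redex, it has a magnetic root $\alpha$ with $\pi\left( \alpha \right) = 1$, so this unique disk is $\pi\left( 1 \right)$. Magnetism of a root depends only on its image under $\pi$, because conditions \cref{eq:mod_principal_units_condition} are conditions on $\pi\left( \alpha \right)$ alone. Hence a root $\beta$ of $\h{}$ is magnetic if and only if $\pi\left( \beta \right) = 1$, that is, if and only if $v\left( \beta \right) = 0$ and $\beta$ reduces to $1 \in \overline{\kappa}$.

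\emph{Step 2: reduce modulo the maximal ideal.} By the proof of \Cref{lem:slope_length_valuation}, $v\left( a_m \right) = v\left( a_k \right) = 0$, and by assumption $v\left( a_n \right) > 0$. Therefore the reduction is
\[
  \overline{\h{}} = x^k\left( \bar a_m x^{m-k} + \bar a_k \right) \in \kappa\left[ x \right].
\]
Since $1$ lifts to a root of $\h{}$, we get $\bar a_m + \bar a_k = 0$, so $\overline{\h{}} = \bar a_m x^k\left( x^{m-k} - 1 \right)$. Write $m - k = p^{e} t$ with $e = v\left( m - k \right) = v\left( a_n \right)$ and $p \nmid t$. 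In characteristic $p$, and using that $\kappa$ is perfect, we have $x^{m-k} - 1 = \left( x^t - 1 \right)^{p^e}$. The polynomial $x^t - 1$ is separable over $\kappa$ because $p \nmid t$, so $1$ is a root of $\overline{\h{}}$ of multiplicity exactly $p^e$.

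\emph{Step 3: transfer the count back to $\h{}$.} This is the main technical point. Because $\h{}$ is integral and primitive over $R$, we can factor it over the ring of integers of a splitting field as a constant times $\prod (x - \beta_i)$ times factors coming from roots of negative valuation. Those latter factors are constant modulo the maximal ideal, by the Newton polygon. Reducing this factorization shows that the number of roots $\beta_i$ (counted with multiplicity) that are integral and reduce to $1$ equals the multiplicity of $1$ in $\overline{\h{}}$, namely $p^e$. Combined with Step 1, this gives exactly $p^{v\left( m - k \right)} = p^{v\left( a_n \right)}$ magnetic roots, counted with multiplicity.

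To get the statement for distinct roots, I would additionally check that these roots are simple. A multiple root $\beta$ would be a common root of $\h{n}$ and $\h{m}$. This forces $\beta^{m-k}$ and $\beta^{n-k}$ to equal explicit constants, so since $\gcd\left( n - k, m - k \right) = 1$, $\beta$ is one fixed value. In the intended application the trinomial is separable, so the count with multiplicity and the count of distinct roots agree.
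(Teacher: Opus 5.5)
Your proposal is correct and follows the paper's own route. You reduce $\h{}$ modulo the maximal ideal to $x^k\left( \bar a_m x^{m-k} + \bar a_k \right)$, note that $1$ is a root of multiplicity exactly $p^{v\left( m-k \right)}$ because this is a $p^{v\left( m-k \right)}$-th power of a separable factor, transfer that count to the roots of $\h{}$ lying over $1$, and identify those roots with the magnetic roots via \Cref{cor:ex_disk}. Your write-up is more explicit than the paper's in three places: you spell out the factorization step; your observation $\bar a_m + \bar a_k = 0$ means perfectness of $\kappa$ is not actually needed; and your remark on multiple roots only matters for counting distinct roots, which the later upper-bound uses in \Cref{cor:ex_bound} do not require.
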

  \begin{proof}
    When considered as a polynomial over $\kappa$, $\h{}$ satisfies 
    \[
      \h{} 
      = 
      x^k \left( 
        a_m x^{m - k} + a_k 
      \right) 
      =
      x^k \left(  
        a_m^{\left( p^{-v\left( m - k \right)} \right)} x^{\frac{m - k}{p^{v\left( m - k \right)}}} + a_k^{\left( p^{-v\left( m - k \right)} \right)}
      \right)^{p^{v\left( m - k \right)}}
      \in \kappa\left[ x \right]
    \]
    and $\left( a_m^{\left( p^{-v\left( m - k \right)} \right)} x^{\frac{m - k}{p^{v\left( m - k \right)}}} + a_k^{\left( p^{-v\left( m - k \right)} \right)} \right) \subseteq \kappa\left[ x, x^{-1} \right]$ is reduced.
    Therefore, since we assume here that $\h{}$ is \aredex polynomial and in particular integral over $R$, it follows that $\h{}$ has exactly $p^{v\left( m - k \right)}$ roots in $\overline{\F}$ lying over $1$, which means, according to \Cref{cor:ex_disk}, that $\h{}$ has exactly $p^{v\left( m - k \right)}$ \ex roots.
  \end{proof}

  Let $p$ be a rational prime.
  We denote by $\D_\alpha$ the different ideal of $\Q_p\left( \alpha \right)$ for each $\alpha \in \overline{\Q_p}$ and deduce from \Cref{lem:counting_ex_roots} the following

  \begin{corollary}\label{cor:ex_bound}
    Assume that $p| ab$.
    Then $f$ has at most $\exrootscountbound$ \ex roots above $\Q_p$ and for each such root $\alpha$, 
    \[
      v_p\left( \Deta \right) \leq \diffbnd.
    \]
  \end{corollary}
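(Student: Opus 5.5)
The plan is to treat $f = x^n + ax^k + b$ as the trinomial $\h{}$ of this section, with exponents $(n,k,0)$ and coefficients $(1,a,b)$, viewed over $\F = \Q_p$. Then $\gcd(n-k,k,n)=1$ holds because $\gcd(n,k)=1$.

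\textbf{Reduction to a reduced magnetic polynomial.} By \Cref{cor:ex_disk}, all magnetic roots of $f$ lie in a single disk $\pi(q)$, where $q=\q{n}{m}^r\q{m}{k}^s$. So it suffices to count the roots in that one disk and to bound the different at each of them. I would apply \Cref{lem:reduction} to replace $f$ by $\h{}=f\circ u^{-1}$, divided by a suitable power of $p$ so that it is primitive, with $u=qx^{\pm1}$. This is a reduced magnetic polynomial in the sense of \Cref{def:aredex}. The substitution is an $\F$-rational Möbius-type change of variable, so it changes neither the field $\Q_p(\alpha)$, nor the different $\Deta$, nor the number of magnetic roots (by \Cref{lem:scalability}). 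Its Newton polygon has a slope-$0$ segment, whose endpoints are two of the three exponents. \Cref{lem:slope_length_valuation} then says: the third coefficient $a_\ast$ satisfies $v(a_\ast)=v(\text{difference of the other two exponents})$, and both remaining exponent differences are $p$-adic units. Since $p\mid ab$, at least one of the original coefficients is a non-unit. Hence after normalization we are in the non-equal valuation case, with $v(a_\ast)>0$.

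\textbf{Counting.} By \Cref{lem:counting_ex_roots}, $\h{}$ has exactly $p^{v(a_\ast)}$ magnetic roots. It then remains to express $v(a_\ast)$ in terms of the original data. The normalized coefficients are $1,a,b$ multiplied by powers of $q$ and then divided by a common power of $p$. The slope-$0$ condition forces the two endpoint coefficients to have equal valuation after scaling. A direct case check over the three possible slope-$0$ pairs, $\{n,k\}$, $\{k,0\}$ and $\{n,0\}$, should show that $v(a_\ast)\le v_p(ab)$. On the other hand, $v(a_\ast)$ equals the valuation of an exponent difference, so $v(a_\ast)\le\log_p n$. Combining these gives the stated count $\exrootscountbound$.

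\textbf{Different bound, the main obstacle.} Let $g$ be the irreducible factor of $\h{}$ over $\Q_p$ with root $\alpha$. Since $\alpha$ is integral, $\Deta$ divides $g'(\alpha)$, and $v(g'(\alpha))\le v(\h{}'(\alpha))$ because the cofactor of $g$ is integral at $\alpha$. So it suffices to bound
\[
  v\left( \h{}'(\alpha) \right) = \sum_{\beta\neq\alpha} v(\alpha-\beta).
\]
The roots $\beta$ outside the disk of $1$ contribute $0$. Only the $p^{v(a_\ast)}-1$ other magnetic roots contribute.

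I would bound each term $v(\alpha-\beta)$ by expanding $\h{}(\alpha+y)$ and reading off the Newton polygon in $y$. The coefficient of $y^j$ is $\sum_i a_i\binom{i}{j}\alpha^{i-j}$. Using that $\alpha$ is magnetic and $\h{}(\alpha)=0$, the linear coefficient has valuation controlled through $\h{n}(\alpha)$. The coefficient at $j=p^{v(a_\ast)}$ is a unit, by the reduction computed in \Cref{lem:counting_ex_roots}. Hence the total valuation of the $p^{v(a_\ast)}-1$ small roots in $y$ is at most $v$ of the linear coefficient. That coefficient is $\h{}'(\alpha)$ itself, so this step is circular and must instead be closed using the two expressions $\alpha\h{}'(\alpha)=\h{n}(\alpha)=\h{m}(\alpha)$.

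Concretely, I would subtract these expressions to get
\[
  (n-m)\,a_n\alpha^n \;=\; (k-m)\,a_k\alpha^k - (k-n)\,a_k\alpha^k \;+\; \ldots ,
\]
which pins $v(\h{}'(\alpha))$ between $v(a_\ast)$ and $v(a_\ast)$ plus the valuation of a binomial-type coefficient of size at most $\log_p n$. I expect this step to be the hard one. If this direct estimate fails, the fallback is to bound the different via the ramification index and the standard estimate $v(\Deta)\le 1-1/e+v(e)$ for $e\le p^{v(a_\ast)}$. Either route should produce the bound $\diffbnd$.
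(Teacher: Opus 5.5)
Your fallback for the different bound is exactly the paper's argument. It is also the only one of your two routes that works, so it should be the proof rather than a fallback. The primary route cannot be repaired. At a magnetic root the two summands of $\alpha h'(\alpha)=(m-n)a_m\alpha^m+(k-n)a_k\alpha^k$ have equal valuation and may cancel to arbitrary depth, so $v(h'(\alpha))$ is not bounded in terms of $v_p(ab)$ at all. For instance, take $f=x^3+(2^N-3)x+2$ and $p=2$. Then $f(1+y)=2^N+2^Ny+3y^2+y^3$, so the two magnetic roots $\alpha,\alpha'$ near $1$ satisfy $v_2(\alpha-\alpha')\ge N/2$, hence $v_2(f'(\alpha))\ge N/2$, while $v_2(ab)=1$. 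Any estimate going through $v_p(\Deta)\le v_p(f'(\alpha))$ is therefore hopeless, and only the ramification bound works. The $\Q_p$-conjugates of a magnetic root are magnetic, because magnetism is a valuation condition on $\alpha$ with coefficients in $\Q_p$. Hence $e\le[\Q_p(\alpha):\Q_p]\le p^{v_p(ab)}$, so $v_p(e)\le v_p(ab)$ and $1-e^{-1}+v_p(e)\le 1+v_p(ab)$.

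The counting step also has a gap. Your inference ``since $p\mid ab$, after normalization we are in the non-equal valuation case'' is false when the magnetic disk has positive valuation $w$, because the rescaling $x\mapsto qx$ can make all three coefficients units. For $f=x^3+50x+250$ and $p=5$ we get $f(5x)=125(x^3+2x+2)$, and $x^3+2x+2\equiv(x-1)^2(x+2)\pmod 5$. So you land in the equal-valuation case, where \Cref{lem:counting_ex_roots} does not apply; there \Cref{lem:2_magnetic_roots} gives at most two magnetic roots, which suffices.

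Your ``direct case check'' that $v(a_\ast)\le v_p(ab)$ is also not pure bookkeeping when the slope-$0$ segment joins $x^k$ and $x^0$. There $v(a_\ast)=(n-k)w-v_p(a)$, which the Newton polygon alone does not bound by $v_p(ab)$. You need the extra input $v(a_\ast)=v_p(k)<k\le kw\le v_p(b)$ from \Cref{lem:slope_length_valuation}.

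The paper sidesteps both issues by splitting off the case $w\ge 1$. There all magnetic roots have integral valuation $w$, so their number is at most the horizontal length of the slope-$(-w)$ segment, hence at most its height, hence less than $p^{v_p(ab)}$. Only for $w=0$ does the paper pass to a reduced magnetic polynomial. In that case exactly one of $a,b$ is a non-unit, and \Cref{lem:counting_ex_roots} gives exactly $p^{v_p(ab)}$.
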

  \begin{proof}
    Let $\alpha$ be \anex root of $f$ (if there is no such root then the result is trivial).
    According to \Cref{cor:ex_disk}, $\pi\left( \alpha \right)$ is the only \ex disk and it belongs to $\pi\left( \Q^\times \right)$ (recall \cref{def:pi} for $\pi$).
    In particular, all of the \ex roots have the same valuation and that valuation is an integer.
    If that integer is at least $1$, then the number of \ex roots can not exceed the maximal height difference between two vertices of the Newton polygon of $f$, which is clearly less than $\exrootscountbound$.
    Assume then that the valuation of the \ex roots is $0$.
    Then, by \Cref{lem:scalability} we can assume without loss of generality that $f$ is \aredex polynomial.
    In particular exactly one of the coefficients $a$ and $b$ has positive valuation, and according to \Cref{lem:counting_ex_roots}, the number of \ex roots of $f$ is bounded by $p$ raised to the valuation of that coefficient, which is exactly $\exrootscountbound$.

    In particular it follows that $\alpha$ has at most $\exrootscountbound$ conjugates above $\Q_p$ and hence its ramification degree is at most $\exrootscountbound$.
    Denote this ramification degree by $e$.
    Then, $1-e^{-1} + v_p\left( e \right)$ is a well known upper bound on the valuation of $\Deta$, the different of $\Q_p\left( \alpha \right)$ over $\Q_p$ (see for example \cite[Remark 1 to Proposition 13 of Chapter. III \S 7]{SerreLocalFields}).
    However, $$1 - e^{-1} + v_p\left( e \right) \leq 1 + v_p\left( e \right) \leq 1 + v_p\left( \exrootscountbound \right) = 1 + \exrtscntbndval$$ and hence the result follows.
  \end{proof}

  \subsection{The derivative at non \ex roots}
  \begin{lemma}
    \label{lem:non_es_bound}
    If $\alpha$ is a non \es{n} root of $\h{}$ then
    \[
      v\left( \alpha \h{}'\left( \alpha \right) \right) 
      \leq
      \max\left\{ a_m \alpha^m, a_k \alpha^k \right\} 
    \]
  \end{lemma}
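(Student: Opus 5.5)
We read the right-hand side of the statement as $\max\left\{ v\left( a_m \alpha^m \right), v\left( a_k \alpha^k \right) \right\}$, since the left-hand side is a valuation. The plan is to compute $\alpha \h{}'\left( \alpha \right)$ exactly using the identity from the start of \Cref{sec:local}. The hypothesis that $\alpha$ is not \es{n} then says that no cancellation occurs between the two terms. Finally, coprimality controls the valuations of the integer factors $m-n$ and $k-n$.

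First, since $\alpha$ is a root of $\h{}$, we have
\[
  \alpha \h{}'\left( \alpha \right) = \h{n}\left( \alpha \right) = \h{n,m}\left( \alpha \right) + \h{n,k}\left( \alpha \right),
\]
where $\h{n,m}\left( \alpha \right) = (m-n)a_m\alpha^m$ and $\h{n,k}\left( \alpha \right) = (k-n)a_k\alpha^k$. Because $\alpha$ is not an \es{n} root, condition \cref{eq:valuation_inequality_condition} fails. The ultrametric inequality always gives $\geq$, so failure means equality:
\[
  v\left( \alpha \h{}'\left( \alpha \right) \right) = \min\left\{ v(m-n) + v\left( a_m\alpha^m \right),\; v(k-n) + v\left( a_k\alpha^k \right) \right\}.
\]

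Second, we use $\gcd\left( n-m, k-n \right) = 1$, which follows from the standing assumption $\gcd(n-m,m-k)=1$. In residue characteristic $0$ both $v(m-n)$ and $v(k-n)$ vanish. In residue characteristic $p>0$ the prime $p$ cannot divide both $m-n$ and $k-n$, so at least one of $v(m-n)$, $v(k-n)$ equals $0$. By the symmetry in $m$ and $k$, suppose $v(k-n)=0$. Then the minimum above is at most $v\left( a_k\alpha^k \right)$, which is at most $\max\left\{ v\left( a_m\alpha^m \right), v\left( a_k\alpha^k \right) \right\}$. This gives the claim.

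There is no serious obstacle. The only point needing care is the bookkeeping: $v$ of an integer is $v$ of its image in $\F$, and is $0$ whenever the residue characteristic does not divide it. The coprimality must be invoked for the pair $(m-n,k-n)$ rather than the pair as written in the standing assumption, and this is immediate since $k-n = -(m-k)-(n-m)$.
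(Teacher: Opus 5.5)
Your proof is correct and follows essentially the same route as the paper: since $\alpha$ is not $n$-magnetic, $v\left(\alpha h'(\alpha)\right)=v\left(h_n(\alpha)\right)$ is at most (indeed, as you note, equal to) the smaller of $v\left((m-n)a_m\alpha^m\right)$ and $v\left((k-n)a_k\alpha^k\right)$. Then $\gcd(m-n,k-n)=1$ makes one of the integer factors a $v$-unit, so this is at most $\max\left\{v(a_m\alpha^m),v(a_k\alpha^k)\right\}$; your reading of the right-hand side as valuations matches how the paper's proof uses the statement.
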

  \begin{proof}
    From the definition of \es{n} roots it follows that 
    \[
      v\left( \alpha \h{}'\left( \alpha \right) \right) 
      \leq
      \min\left\{ \left( m - n \right)a_m \alpha^m, \left( k - n \right)a_k \alpha^k \right\}
    \]
    and hence the result follows from the assumption that 
    $
      1 = \gcd\left( m - n, k - n \right)
    $
    and hence either $v\left( m-n \right) = 0$ in which case $v\left( \alpha \h{}'\left( \alpha \right) \right) \leq v\left( a_m \alpha^m \right)$ or $v\left( k - n \right) = 0$ in which case $v\left( \alpha \h{}'\left( \alpha \right) \right) \leq v\left( a_k \alpha^k \right)$.
  \end{proof}

  \begin{corollary}
    \label{cor:non_ex_bound}
    If $\alpha$ is a non \ex root of $\h{}$ then
    \[
      v\left( \alpha \h{}'\left( \alpha \right) \right) 
      \leq
      \max\left\{  
        v\left( a_n \alpha^n \right),
        v\left( a_m \alpha^m \right),
        v\left( a_k \alpha^k \right)
      \right\}.
    \]
  \end{corollary}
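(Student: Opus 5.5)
Since $\alpha$ is not \ex, the definition of \ex disks (every disk that is simultaneously \es{n}, \es{m} and \es{k}) tells us that $\pi\left( \alpha \right)$ fails to be \es{i} for at least one $i \in \left\{ n, m, k \right\}$. The plan is to apply \Cref{lem:non_es_bound} for that index $i$, after permuting $n,m,k$ as the standing convention of this section allows. This avoids any case analysis on the Newton polygon.

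First, observe that the quantity $\alpha\h{}'\left( \alpha \right)$ does not depend on which exponent we single out. The polynomial $\h{i} = x\h{}' - i\h{}$ agrees with $x\h{}'$ at every root of $\h{}$, for each $i \in \left\{ n,m,k \right\}$. Hence $\alpha \h{}'\left( \alpha \right) = \h{i}\left( \alpha \right)$ for every such $i$, and the argument in \Cref{lem:non_es_bound} applies verbatim with $n$ replaced by $i$ and $\left\{ m,k \right\}$ replaced by the two remaining exponents $\left\{ j, l \right\}$. This is legitimate because the hypothesis $1 = \gcd\left( n-m, m-k, k-n \right)$ is symmetric in the three exponents. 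In particular $\gcd\left( j - i, l - i \right) = 1$, which is exactly the input used in \Cref{lem:non_es_bound}.

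Next, I read the conclusion of \Cref{lem:non_es_bound} as a bound by valuations. That is, for a non \es{i} root,
\[
  v\left( \alpha \h{}'\left( \alpha \right) \right) \leq \max\left\{ v\left( a_j \alpha^j \right), v\left( a_l \alpha^l \right) \right\}.
\]
The lemma's proof indeed yields a bound by $v\left( a_j\alpha^j \right)$ or by $v\left( a_l\alpha^l \right)$, and either one is at most this maximum. Enlarging the maximum to include the third term $v\left( a_i \alpha^i \right)$ only weakens the bound, which gives the stated inequality.

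The only step needing care is the justification that "non \ex" means "non \es{i} for some $i$". This is immediate from the definition; \Cref{lem:two_escaping_implies_ex} even shows that the root fails to be \es{i} for at least two indices, although we do not need this. I expect no real obstacle. The one point to check explicitly is that the conditions \cref{eq:valuation_inequality_condition}--\cref{eq:mod_principal_units_condition} are stated symmetrically enough that the permuted version of \Cref{lem:non_es_bound} is valid.
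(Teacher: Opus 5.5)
Your proof is correct and is essentially the paper's argument. The paper invokes \Cref{lem:two_escaping_implies_ex} to get two indices at which $\alpha$ fails to be magnetic, applies \Cref{lem:non_es_bound} to each, and then weakens the resulting minimum of two maxima to the maximum of all three terms, so the second index is never actually needed; your single application of \Cref{lem:non_es_bound}, with the exponents permuted as the section's convention allows, is a slight streamlining of the same proof.
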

  \begin{proof}
    By \Cref{lem:two_escaping_implies_ex} we can assume without loss of generality that $\alpha$ is neither an \es{n} root of $\h{}$ nor an \es{m} root of $\h{}$.
    Therefore, it follows from \Cref{lem:non_es_bound} that 
    \begin{align*}
      v\left( \alpha \h{}'\left( \alpha \right) \right)
      \leq
      &
      \min\left\{  
        \max\left\{  
          a_m \alpha^m,
          a_k \alpha^k
        \right\},  
        \max\left\{  
          a_n \alpha^n,
          a_k \alpha^k
        \right\}
      \right\}
      \leq \\
      & \max\left\{  
        \max\left\{  
          a_m \alpha^m,
          a_k \alpha^k
        \right\},  
        \max\left\{  
          a_n \alpha^n,
          a_k \alpha^k
        \right\}
      \right\} \leq \\
      & \max\left\{  
        a_n \alpha^n,
        a_m \alpha^m,
        a_k \alpha^k
      \right\}
      .
    \end{align*} 
  \end{proof}

  Just like in the case of \Cref{lem:slope_length_valuation}, the condition that $\alpha$ is a non \ex root of $\h{}$ is just as general as the condition of the following
  \begin{corollary}
    \label{cor:explicit_non_ex_bound}
    If $\alpha$ is a non \ex root of $\h{}$, and the segment connecting the points $\left( m, v\left( a_m \right) \right)$ and $\left( n, v\left( a_n \right) \right)$ belong to the Newton polygon of $\h{}$ and its slope is $-v\left( \alpha \right)$, then
    \begin{gather}
      v\left( \alpha \h{}'\left( \alpha \right) \right) 
      \leq
      v\left( a_k \right) + \frac{k}{n - m}v\left( \frac{a_m}{a_n} \right)
      \label{item:k_es}
    \end{gather}
    and 
    \begin{gather}
      \text{either }
      \quad
      v\left( n - m \right) 
      >
      0
      \quad
      \text{ or }
      \quad
      v\left( \alpha \h{}'\left( \alpha \right) \right)
      \leq 
      \frac{
        n v\left( a_m \right) - m v\left( a_n \right)
      }{
        n - m
      }.
      \label{item:non_k_es}
    \end{gather}
  \end{corollary}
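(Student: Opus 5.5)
The first inequality \cref{item:k_es} will come from \Cref{lem:non_es_bound} with the roles of $n$ and $k$ swapped, so that it concerns non \es{k} roots. The second inequality \cref{item:non_k_es} will come from the same lemma with the roles of $n$ and $m$ swapped. In both cases the remaining work is to evaluate the valuations of the monomials $a_i\alpha^i$ using the Newton polygon hypothesis. The basic input is that the segment from $(m,v(a_m))$ to $(n,v(a_n))$ has slope $-v(\alpha)$. This gives
\[
  v(a_m\alpha^m) = v(a_n\alpha^n),
  \qquad
  v(\alpha) = \frac{v(a_m) - v(a_n)}{n-m},
\]
and because the segment lies on the Newton polygon, also $v(a_k\alpha^k) \geq v(a_m\alpha^m)$.

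For \cref{item:k_es}, I start from the fact that $\alpha$ is not \ex. By \Cref{lem:two_escaping_implies_ex}, at most one of the three magnetism conditions holds. I split into cases.

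If $\alpha$ is not \es{k}, I use the sharper form appearing inside the proof of \Cref{lem:non_es_bound}. There $v(\alpha\h{}'(\alpha))$ is bounded by the valuation of one of $(n-k)a_n\alpha^n$ or $(m-k)a_m\alpha^m$, namely whichever has a coefficient $n-k$ or $m-k$ of valuation $0$. Both monomials have valuation $v(a_m)+m\,v(\alpha)$. It remains to compare this with the right-hand side $v(a_k)+k\,v(\alpha)$, where $k\,v(\alpha)=\frac{k}{n-m}v(a_m/a_n)$. The comparison is precisely the Newton-polygon inequality $v(a_k\alpha^k)\geq v(a_m\alpha^m)$, so I get $v(\alpha\h{}'(\alpha))\le v(a_m\alpha^m)\le v(a_k\alpha^k)$.

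I expect the main obstacle to be the normalisation. The stated right-hand side equals $v(a_k\alpha^k)$ only after the factor $\alpha$ is accounted for: $v(a_k)+\frac{k}{n-m}v(a_m/a_n)$ is literally $v(a_k)+k\,v(\alpha)$ when $v(\alpha)$ is computed from the slope. I would fix the sign convention carefully so that this identification holds exactly.

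If instead $\alpha$ is \es{k}, then it is neither \es{n} nor \es{m}. \Cref{lem:non_es_bound} applied with $n$, and with $m$, bounds $v(\alpha\h{}'(\alpha))$ by the valuation of a monomial among $a_m\alpha^m, a_k\alpha^k$, and of one among $a_n\alpha^n, a_k\alpha^k$. In every case this is at most $\max\{v(a_m\alpha^m),v(a_k\alpha^k)\}=v(a_k\alpha^k)$, which gives \cref{item:k_es} again.

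For \cref{item:non_k_es}, assume $v(n-m)=0$. I want to bound by the common value
\[
  v(a_m\alpha^m)=v(a_n\alpha^n)=\frac{n\,v(a_m)-m\,v(a_n)}{n-m},
\]
which follows directly from the slope formula for $v(\alpha)$.

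If $\alpha$ is not \es{k}, I use the coefficient $(n-k)$ or $(m-k)$ as above, and the bound holds.

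If $\alpha$ is \es{k}, then it is not \es{n}. The term $\h{n}$ contains the summand $(m-n)a_m\alpha^m$, which has unit coefficient because $v(n-m)=0$. Non-magnetism at $n$ gives equality of $v(\alpha\h{}'(\alpha))$ with the minimum of the two summands of $\h{n}$. That minimum is at most $v((m-n)a_m\alpha^m)=v(a_m\alpha^m)$, as required.

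This last case, where the hypothesis $v(n-m)=0$ is genuinely used, is where I would be most careful about which summand carries the unit coefficient.
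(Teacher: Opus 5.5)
Your proof is correct and is essentially the paper's argument. Both rest on three ingredients: the equality with the minimum of the two summands that non-magnetism gives, the gcd condition supplying a unit coefficient, and the Newton-polygon comparison $v(a_k\alpha^k)\ge v(a_m\alpha^m)=v(a_n\alpha^n)$ (your normalisation $v(\alpha)=\frac{v(a_m)-v(a_n)}{n-m}$ is the right one). The only difference is organisational: the paper gets \cref{item:k_es} in one line from \Cref{cor:non_ex_bound} and proves \cref{item:non_k_es} by splitting on whether $\alpha$ fails to be $m$-magnetic or fails to be $n$-magnetic, whereas you split on whether $\alpha$ is $k$-magnetic.
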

  \begin{proof}
    For each $j \in \left\{ k,n,m \right\}$, $v\left( a_j \alpha^j \right)$ is the $y$-coordinate of the intersection with the $y$-axis of the line of slope $-v\left( \alpha \right)$ passing through the point $\left( j, v\left( a_j \right) \right)$.
    Since the Newton polygon of $\h{}$ contains the segment connecting the points $\left( m, v\left( a_m \right) \right)$ and $\left( n, v\left( a_n \right) \right)$, the point $\left( k, v\left( a_k \right) \right)$ lies above the line containing this segment.
    However, the slope of this line is $-v\left( \alpha \right)$, and hence among the lines of slope $-v\left( \alpha \right)$ passing through the points $\left( j, v\left( a_j \right) \right)$ for $j \in \left\{ k,n,m \right\}$, the one passing through $\left( k, v\left( a_k \right) \right)$ has the largest intersection with the $y$-axis.
    Therefore, according to \Cref{cor:non_ex_bound}, $v\left( \alpha \h{}'\left( \alpha \right) \right) \leq v\left( a_k \alpha^k \right)$, and it can be verified that $v\left( a_k \alpha^k \right)$ equals to the bound presented in \cref{item:k_es}.

    To prove \cref{item:non_k_es}, assume that $v\left( n - m \right) = 0$ and we will prove that the required bound on $v\left( \alpha \h{}'\left( \alpha \right) \right)$ holds.
    Since $\alpha$ is assumed to be non-magnetic, according to \Cref{lem:two_escaping_implies_ex} it is either \es{m} or \es{n}.
    By the criterion \cref{eq:valuation_inequality_condition} for \es{n}, it follows that in the first case 
    $$
      v\left( \alpha \h{}'\left( \alpha \right) \right) 
      \leq 
      v\left( \h{m,n}\left( \alpha \right) \right)
      =
      v\left( \left( n - m \right) a_n \alpha^n \right)
      =
      v\left( a_n \alpha^n \right)
    $$
    and in the second case
    $$
      v\left( \alpha \h{}'\left( \alpha \right) \right) 
      \leq 
      v\left( \h{n,m}\left( \alpha \right) \right)
      =
      v\left( \left( m - n \right) a_m \alpha^m \right)
      =
      v\left( a_m \alpha^m \right)
    $$
    and it can be verified that both $v\left( a_n \alpha^n \right)$ and $v\left( a_m \alpha^m \right)$ are equal to the bound on $v\left( \alpha\h{}'\left( \alpha \right) \right)$ presented in \cref{item:non_k_es}.
  \end{proof}
  \subsection{The discriminant}
  Fix a rational prime $p$.
  Recall that we denote by $g$ an arbitrary factor of $f$, by $L$ the field $\Q\left[ x \right] / \left( g \right)$ and by $\Deta$ the different ideal of the image of $L$ in $\overline{\Q_p}$ under the embedding that maps $x$ to $\alpha$.
  \begin{proposition}
    \label{prop:discriminant_bound}
    Assume that $p | ab$.
    Then
    \[
      v_p\left( \disc\left( L \right) \right)
      \leq
      nv_p\left( ab \right) + \exrootscountbound
    \]
    and if the Newton polygon of $f$ consists of two segments of different slopes then  
    \[
      v_p\left( \disc\left( L \right) \right)
      \leq
      nv_p\left( b \right) + \exrootscountbound
    \]
    \end{proposition}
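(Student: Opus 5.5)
We bound $v_p\left(\disc\left(L\right)\right)$ locally. Since $L \otimes \Q_p \cong \prod_{\mathfrak p | p} L_{\mathfrak p}$, and the completions $L_{\mathfrak p}$ correspond to the Galois orbits of roots of $g$ in $\overline{\Q_p}$, we have
\[
  v_p\left(\disc\left(L\right)\right) = \sum_{\alpha} v_p\left(\Deta\right),
\]
where $\alpha$ runs over the roots of $g$ in $\overline{\Q_p}$. Here each local different is counted once per root, with the normalization $v_p(p)=1$. So it suffices to bound $v_p\left(\Deta\right)$ root by root. We split the roots into the \ex ones and the non \ex ones.

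For the \ex roots, \Cref{cor:ex_bound} gives at most $\exrootscountbound$ such roots, each with $v_p\left(\Deta\right) \leq \diffbnd$. Combined with the standard bound $v_p\left(\Deta\right) \leq v_p\left(g'\left(\alpha\right)\right)$, we plan to show these roots contribute at most $\exrootscountbound$ plus whatever share of $n v_p\left(ab\right)$ the derivative bound already allots to them.

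For a non \ex root $\alpha$ we use $v_p\left(\Deta\right) \leq v_p\left(f'\left(\alpha\right)\right)$, since $\alpha$ is a root of the integral polynomial $f$ (so also $g'\left(\alpha\right) \mid f'\left(\alpha\right)$). Note that $v_p\left(\alpha\right)\geq 0$ because $f$ is monic integral. Hence $v_p\left(f'\left(\alpha\right)\right) \leq v_p\left(\alpha f'\left(\alpha\right)\right)$, and we apply \Cref{cor:explicit_non_ex_bound} with $\h{}=f$, taking $(n,m,k)$ to be the ordering in which the segment of the Newton polygon containing $\alpha$ joins $(m,v(a_m))$ and $(n,v(a_n))$.

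The Newton polygon of $x^n+ax^k+b$ has vertices among $(0,v(b))$, $(k,v(a))$ and $(n,0)$. Bound \cref{item:k_es} gives $v\left(\alpha f'\left(\alpha\right)\right)$ at most the height of the omitted vertex above the line through the segment. This is at most $v_p\left(ab\right)$ in the one-segment case, and in the two-segment case at most the intercept value $v_p\left(b\right)$. Summing over at most $n$ roots gives $n v_p\left(ab\right)$, respectively $n v_p\left(b\right)$.

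Concretely, in the two-segment case we compute $v\left(a_k\alpha^k\right)$ for each segment. For roots on the left segment, joining $(0,v(b))$ to $(k,v(a))$, the third vertex is $(n,0)$. The bound is the value at $x=0$ of the line of slope $-v(\alpha)$ through $(n,0)$, which is $n v(\alpha) \leq v(b)$ since $v(\alpha)\leq v(b)/k$ there. Roots on the right segment have smaller valuation and are handled by the same $y$-intercept argument, giving $v(\alpha f'(\alpha)) \leq v(b)$ in both cases. In the one-segment case the intercept is $v(b)\le v(ab)$ directly.

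The main obstacle is the bookkeeping for \ex roots, which must not be double-counted. We plan to bound their total contribution by $\exrootscountbound\cdot\diffbnd$ minus nothing, and then argue that the non \ex roots number at most $n-\exrootscountbound$. Each of them contributes at most $v_p\left(ab\right)$ (or $v_p(b)$). The inequality $\exrootscountbound\,\diffbnd \leq \exrootscountbound\, v_p\left(ab\right) + \exrootscountbound$ should then hold from the explicit form of $\diffbnd = 1+\exrtscntbndval$, since $\exrtscntbndval$ is the valuation of a single coefficient, which is at most $v_p(ab)$. If instead the \ex roots have positive valuation, the count in \Cref{cor:ex_bound} is the Newton height difference, and the same per-root bound by the intercept applies.
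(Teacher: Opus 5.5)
\textbf{There is a genuine gap in the two-segment case.} Your framework matches the paper's: write $v_p$ of the discriminant as $\sum_\alpha v_p(\mathcal{D}_\alpha)$, bound the magnetic roots by \Cref{cor:ex_bound}, and bound the non-magnetic ones by \Cref{cor:explicit_non_ex_bound}. The one-segment case works. The first bound there is $v_p(a)+\frac{k}{n}v_p(b)$, not $v_p(b)$ as your last paragraph says, but it is still at most $v_p(ab)$.

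The two-segment case does not work. For a root on the left segment, $v_p(\alpha)=\frac{v_p(b)-v_p(a)}{k}$, and the first bound of \Cref{cor:explicit_non_ex_bound} (omitted vertex $(n,0)$) equals $nv_p(\alpha)$. Your inference that $nv_p(\alpha)\le v_p(b)$ because $v_p(\alpha)\le v_p(b)/k$ only gives $nv_p(\alpha)\le\frac{n}{k}v_p(b)$. In fact the two slopes differ exactly when $nv_p(a)<(n-k)v_p(b)$, which is equivalent to $nv_p(\alpha)>v_p(b)$. So this bound exceeds $v_p(b)$ for every left-segment root.

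The per-root claim is false, not merely unproved. Take $f=x^5+x^2+2$ and $p=2$, and consider the two roots with $v_2(\alpha)=\frac12$.
\begin{itemize}
\item The terms $5\alpha^5$ and $2\alpha^2$ of $\alpha f'(\alpha)$ have different valuations $\frac52\neq 2$. So $\alpha$ is not $0$-magnetic, hence not magnetic.
\item Their quadratic factor over $\mathbb{Z}_2$ is Eisenstein, and the other three roots are units.
\item Hence $v_2(\mathcal{D}_\alpha)=v_2(f'(\alpha))=\frac32>1=v_2(b)$.
\end{itemize}
So no argument giving each non-magnetic root a contribution of at most $v_p(b)$ can succeed.

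What is missing is the second bound of \Cref{cor:explicit_non_ex_bound}, together with the paper's case split coming from $\gcd(k,n-k)=1$.
\begin{itemize}
\item If $p\nmid k$, the second bound applied to a left-segment root gives exactly the common intercept $v_p(b)$. The first bound gives $v_p(b)$ for right-segment roots. So in this case your per-root bound does hold, but for a different reason than the one you give.
\item If $p\mid k$ (so $p\nmid n-k$), the left-segment roots genuinely exceed $v_p(b)$, and you must average instead of bounding root by root. The second bound gives each of the $n-k$ right-segment roots at most $\frac{n}{n-k}v_p(a)<v_p(b)$. The first bound gives each of the $k$ left-segment roots at most $\frac{n}{k}\bigl(v_p(b)-v_p(a)\bigr)$. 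Only the total $nv_p(a)+n\bigl(v_p(b)-v_p(a)\bigr)=nv_p(b)$ is controlled. The magnetic roots lie on the left segment in this case by \Cref{lem:slope_length_valuation}, and each costs at most one more than a non-magnetic one.
\end{itemize}

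A smaller point: there need not be exactly $\exrootscountbound$ magnetic roots. So your claim that the non-magnetic roots number at most $n-\exrootscountbound$ should be replaced by the observation that each magnetic root costs at most $1$ more than the per-root bound used for non-magnetic roots.
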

  \begin{proof}
    We divide now into three cases.
    Assume first that the Newton polygon of $f$ consists of a single segment.
    Then, by \cref{item:k_es} of \Cref{cor:explicit_non_ex_bound}, for each non \ex root $\alpha$ of $f$,
    \[
      v_p\left( \Deta \right) 
      \leq
      v_p\left( \alpha f'\left( \alpha \right) \right)
      \leq
      v_p\left( a \right) + \frac{k}{n} v_p\left( b \right)
      \leq
      v_p\left( ab \right)
    \]
    and therefore by \Cref{cor:ex_bound},    
    \begin{align*}
      &
      v_p\left( \disc\left( L \right) \right)
      =
      \sum_{\alpha} v_p\left( \Deta \right)
      \leq 
      \\
      &
      \left( n - \exrootscountbound \right) v_p\left( ab \right)
      +
      \exrootscountbound \left( 1 + v_p\left( ab \right) \right)
      \leq
      \\
      &
      nv_p\left( ab \right) + \exrootscountbound
      .
    \end{align*}

    Assume now that the Newton polygon of $f$ consists of two segments of different slopes and that $v_p\left( k \right) = 0$.
    Then, according to \cref{item:k_es} of \Cref{cor:explicit_non_ex_bound}, for each non \ex root $\alpha$ of $f$ whose valuation is equal to minus the slope of the rightmost segment of the Newton polygon of $f$,
    \[
      v_p\left( \Deta \right) 
      \leq
      v_p\left( \alpha f'\left( \alpha \right) \right)
      \leq
      v_p\left( b \right) + \frac{0}{n - k}v_p\left( a \right)
      =
      v_p\left( b \right)
    \]
    and according to \cref{item:non_k_es} of \Cref{cor:explicit_non_ex_bound} for each non \ex root $\alpha$ of $f$ whose valuation is equal to minus the slope of the leftmost segment of the Newton polygon of $f$,
    \[
      v_p\left( \Deta \right) 
      \leq
      v_p\left( \alpha f'\left( \alpha \right) \right)
      \leq
      \frac{k v_p\left( b \right) - 0 v_p\left( a \right)}{k - 0}
      =
      v_p\left( b \right)
    \]
    Therefore, according to \Cref{cor:ex_bound},    
    \begin{align*}
      &
      v_p\left( \disc\left( L \right) \right)
      = 
      \sum_{\alpha} v_p\left( \Deta \right)
      \leq
      \\
      &
      \left( n - \exrootscountbound \right) v_p\left( b \right)
      +
      \exrootscountbound \left( 1 + v_p\left( b \right) \right)
      \leq
      \\
      &
      nv_p\left( b \right) + \exrootscountbound
      .
    \end{align*}

    Assume finally that the Newton polygon of $f$ consists of two segments of different slopes and that $v_p\left( n - k \right) = 0$.
    Note that in this case according to \Cref{lem:slope_length_valuation} the valuation of the \ex roots of $f$, if they exist, is minus the slope of the leftmost segment of the Newton polygon of $f$. 
    Therefore, according to \cref{item:non_k_es} of \Cref{cor:explicit_non_ex_bound}, for each root $\alpha$ of $f$ whose valuation is equal to minus the slope of the rightmost segment of the Newton polygon of $f$,
    \[
      v_p\left( \Deta \right) 
      \leq
      v_p\left( \alpha f'\left( \alpha \right) \right)
      \leq
      \frac{
        n v_p\left( a \right) - kv_p\left( 1 \right)
      }{
        n - k
      }
      =
      \frac{n}{n - k}v_p\left( a \right)
    \]
    Similarly, according to \cref{item:k_es} of \Cref{cor:explicit_non_ex_bound} for each non \ex root $\alpha$ of $f$ whose valuation is equal to minus the slope of the leftmost segment of the Newton polygon of $f$,
    \[
      v_p\left( \Deta \right) 
      \leq
      v_p\left( \alpha f'\left( \alpha \right) \right)
      \leq
      v_p\left( 1 \right) + \frac{n}{k} v_p\left( \frac{b}{a } \right)
      =
      \frac{n}{k} v_p\left( \frac{b}{a } \right)
    \]
    Therefore, according to \Cref{cor:ex_bound},    
    \begin{align*}
      &
      v_p\left( \disc\left( L \right) \right)
      =
      \sum_{\alpha} v_p\left( \Deta \right)
      \leq
      \\
      &
      \left( n - k \right)\frac{n}{ n - k } v_p\left( a \right)
      +
      \left( k - \exrootscountbound \right) \frac{n}{k} v_p\left( b / a \right)
      +
      \\
      &
      \exrootscountbound \left( 1 + v_p\left( b / a \right) \right)
      \leq
      \\
      &
      nv_p\left( a \right) + k v_p\left( b / a \right) + \exrootscountbound
      \leq
      \\
      &
      nv_p\left( b \right) + \exrootscountbound
      \ .
      \qedhere
    \end{align*}
    \end{proof}

  \section{
    \label{sec:global}
    Global Implications
  }
  Recall that we use $g$ to denote an arbitrary factor of $f$ and $L$ to denote the field $\Q\left[ x \right] / \left( g \right)$.
  We use $N$ to denote the product of the inertia groups of all of the valuations of $\overline{\Q}$ vanishing at $ab$.
  \Cref{cor:inertia_group_acts_as_transposition_external_settings} allows us to prove the following corollary:
  \begin{corollary}
    \label{cor:transitive_action}
    If $N$ acts transitively on $\Hom_{\Q}\left( L, \overline{\Q} \right)$, then the Galois group of $g$ over $\Q$ is $S_{\deg g}$.
  \end{corollary}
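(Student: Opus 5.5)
Let $G = \Gal\left( g \right)$ act on the set $X = \Hom_{\Q}\left( L, \overline{\Q} \right)$ of the $\deg g$ roots of $g$. Since $G$ is a permutation group on $X$, the plan is to show that $N \le G$ contains enough transpositions. We then combine transitivity of $N$ with the classical fact that a transitive subgroup of $\mathrm{Sym}(X)$ generated by transpositions is the full symmetric group.

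The first step is to understand what $N$ is. By construction $N$ is generated by the inertia groups $I_w$ of places $w$ of $\overline{\Q}$, or more precisely their images in $G$, lying above primes $p \nmid ab$. This is the reading of ``vanishing at $ab$'' consistent with the overview: the inertia at primes away from $ab$. The set of such places is stable under conjugation by $G$, since $\sigma I_w \sigma^{-1} = I_{\sigma w}$ and $\sigma w$ lies over the same prime. Hence $N$ is a normal subgroup of $G$.

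The second step is to apply \Cref{cor:inertia_group_acts_as_transposition_external_settings}. For each such $w$, $I_w$ acts on the roots of $f$ either trivially or as a single transposition. Since the roots of $g$ form an $I_w$-stable subset of the roots of $f$, $I_w$ also acts on $X$ either trivially or by a single transposition. A possible subtlety is that a transposition of roots of $f$ might swap a root of $g$ with a root of another factor. This cannot happen, because inertia elements are Galois automorphisms and preserve each irreducible factor. Consequently $N$, as a subgroup of $\mathrm{Sym}(X)$, is generated by transpositions.

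The third step uses the hypothesis that $N$ acts transitively on $X$. Consider the graph on $X$ whose edges are the transpositions lying in $N$. Its connected components are exactly the $N$-orbits, because $N$ is generated by these transpositions. Transitivity therefore makes the graph connected, and a connected transposition graph generates all of $\mathrm{Sym}(X)$, by the standard induction showing that $(x\,y)$ lies in the group for any path from $x$ to $y$. Hence $N = \mathrm{Sym}(X)$. Since $N \subseteq G \subseteq \mathrm{Sym}(X)$, we conclude $G = S_{\deg g}$.

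The main obstacle is bookkeeping rather than mathematics: checking that \Cref{cor:inertia_group_acts_as_transposition_external_settings}, stated for $f$ and for a fixed place above $p$, transfers to every place $w$ of $\overline{\Q}$ above $p \nmid ab$ and restricts to the roots of $g$. This follows because all places above $p$ are $G$-conjugate, and conjugation preserves the property of acting trivially or as a single transposition.
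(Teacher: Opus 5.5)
Your proposal is correct and follows the paper's argument. The paper cites the corollary on inertia at primes away from $ab$ to see that $N$ acts on the roots by transpositions, then invokes the fact that a transitive permutation group generated by transpositions is symmetric (Osada, Lemma 5). You do the same, spelling out the restriction from the roots of $f$ to those of $g$ and proving the group-theoretic fact directly via the transposition graph.
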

  \begin{proof}
    It follows from \Cref{cor:inertia_group_acts_as_transposition_external_settings} and the fact that any transitive permutation group generated by transpositions is symmetric (see for example \cite[Lemma 5]{Osada1987TheGG}).
  \end{proof}

  \Cref{cor:transitive_action} in turns allows us to prove an abstract sufficient condition for the Galois group of $g$ to be symmetric:
  \begin{theorem}
    \label{thm:strategy}
    If $\Q$ is the only subextension of $L$ that is unramified away from $ab$, then the Galois group of $g$ over $\Q$ is $S_{\deg g}$.
  \end{theorem}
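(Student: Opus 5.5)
We will reduce \Cref{thm:strategy} to \Cref{cor:transitive_action}. Let $M$ be the Galois closure of $L$ over $\Q$ and $G = \Gal\left( M/\Q \right)$, acting transitively on $X = \Hom_{\Q}\left( L, \overline{\Q} \right)$. Since the inertia groups of all places of $\overline{\Q}$ not dividing $ab$ form a set closed under conjugation by $G$, their images generate a normal subgroup $N \trianglelefteq G$. (We read $N$ as the subgroup generated by the inertia groups at the places where $ab$ is a unit, i.e. the places at which \Cref{cor:inertia_group_acts_as_transposition_external_settings} applies.) The goal is to show that $N$ is transitive on $X$.

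Let $H \leq G$ be the stabilizer of a fixed embedding of $L$, so $L \cong M^H$. The $N$-orbit of this embedding corresponds to the subgroup $NH$, and $N$ is transitive on $X$ exactly when $NH = G$. The field $K := M^{NH} = L \cap M^N$ is a subextension of $L$.

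We claim $K$ is unramified away from $ab$. Every inertia group $I_{\mathfrak P} \subseteq G$ at a prime $\mathfrak P$ of $M$ above $p \nmid ab$ lies in $N \subseteq NH$. Hence it acts trivially on $M^{NH}$, so $p$ is unramified in $K$. The hypothesis of \Cref{thm:strategy} then forces $K = \Q$, i.e.\ $NH = G$. Therefore $N$ acts transitively on $X$.

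Now \Cref{cor:transitive_action} gives $\Gal\left( g \right) \cong S_{\deg g}$. The underlying reason is that $N$ is generated by inertia groups each acting on the roots of $f$, hence on the roots of $g$, either trivially or by a single transposition, by \Cref{cor:inertia_group_acts_as_transposition_external_settings}. A transitive group generated by transpositions is the full symmetric group, and it is contained in $\Gal\left( g \right)$.

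The main obstacle is bookkeeping rather than substance. First, we must match the paper's definition of $N$ (``vanishing at $ab$'') with the set of places where $v_p\left( ab \right) = 0$. Second, we must check that ramification in a subfield of $M$ is detected by the image of the inertia groups, which holds because the inertia groups for $M^{NH}/\Q$ are the images of $I_{\mathfrak P}$ in $G/NH$ when $NH$ is normal. The subgroup $NH$ need not be normal, so we instead argue directly. A prime $p$ is unramified in $M^{NH}$ iff every inertia group above $p$ is contained in every $G$-conjugate of $NH$ meeting the relevant double cosets. Since every conjugate of $I_{\mathfrak P}$ lies in the normal subgroup $N \subseteq NH$, this condition holds.
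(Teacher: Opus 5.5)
Your proof is correct and is essentially the paper's argument. The paper phrases it through the equivalence between \'etale $\Q$-algebras and finite $G_{\Q}$-sets, letting $K \subseteq L$ correspond to the quotient $\Hom_{\Q}\left( L, \overline{\Q} \right)/N$, which is exactly your $M^{NH} = L \cap M^N$; it then concludes as you do that $K$ is unramified away from $ab$, hence $K = \Q$ and $N$ acts transitively. The only blemish is your final paragraph, where the ramification criterion is stated confusingly: the clean fact is that $p$ is unramified in $M^J$ if and only if $I_P \subseteq J$ for every prime $P$ of $M$ above $p$, which holds at once here because $I_P \subseteq N \subseteq NH$, so normality of $NH$ is never needed.
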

  \begin{proof}
    By \Cref{cor:transitive_action}, it suffices to show that $N$ acts transitively on $\Hom_{\Q}\left( L, \overline{\Q} \right)$.
    We will show it using the famous fact that the functor $\Hom\left( -, \overline{\Q} \right)$ is a contravariant equivalence between the category of \'{e}tale $\Q$-algebras and the category of finite $G_\Q$-sets.
    Since $N$ is normal in $G_\Q$, $G_\Q$ acts on $\Hom_{\Q}\left( L, \overline{\Q} \right) / N$  through $G_\Q/N$.
    In particular, the quotient map $\Hom_{\Q}\left( L, \overline{\Q} \right) \to \Hom_{\Q}\left( L, \overline{\Q} \right) / N$ is an epimorphism in the category of $G_\Q$-sets.
    Therefore, there is a corresponding embedding of fields $K \hookrightarrow L$.
    
    However, since $N$ acts trivially on $\Hom_{\Q}\left( L, \overline{\Q} \right) / N$, it follows that $K$ is fixed by $N$ and is hence ramified only above primes that divide $ab$.
    Therefore, $K = \Q$ and hence 
    \[
      \Hom_\Q\left( L, \overline{\Q} \right) / N \simeq 
      \Hom_\Q\left( \Q, \overline{\Q} \right) 
    \] 
    and is hence a singleton implying that $N$ indeed acts transitively on $\Hom_{\Q}\left( L, \overline{\Q} \right)$.
  \end{proof}

  Recall that we use $\rd\left( K \right)$ to denote the root discriminant of a number field $K$.
  We can combine this result with the bound on the discriminant to prove \Cref{thm:criterion}:
  \begin{proof}[Proof of \Cref{thm:criterion}]
    We will show that the conditions of \Cref{thm:strategy} hold.
    Let $\overline{\Q} \subsetneq K$ be a number field unramified away from $ab$.
    Assume towards a contradiction that $K \subseteq L$.
    It follows, by the contrapositive of the first assumption in the theorem statement, that there is a prime $p|ab$ such that 
    \[
      C_p \leq v_p\left( \rd\left( K \right) \right) 
    \] 
    The inequality in the statement of the theorem is equivalent to
    the statement that
    \[
      C_p > \frac{n v_p\left( ab \right) + \exrootscountbound}{\deg L}
      .
    \]
    for all $p|ab$.
    Therefore, if we denote by $d\left( L/K \right)$ the relative discriminant of $L$ over $K$, and by $N$ the norm from $K$ to $\Q$, then
    \begin{align*}
      & v_p\left( d\left( L \right) \right) = 
      v_p\left( \d\left( K \right)^{\left[ L:K \right]} N\left( \d\left( L/K \right) \right) \right) \geq \deg L \cdot v_p\left( \rd\left( K \right) \right) \geq \\
      & \deg L \cdot C_p > 
      \deg L \cdot v_p\left( ab \right) + \exrootscountbound
    \end{align*}
    which is a contradiction to \Cref{prop:discriminant_bound}.
  \end{proof}

\section{
  \label{sec:examples}
  Proof of \Cref{thm:main}
}
Recall that for each finite or infinite rational place $\p$ we view $\overline{\Q}$ as a subfield of $\overline{\Q_\p}$ and denote by $v_\p$ the standard valuation of $\overline{\Q_\p}$.
We also denote $i = \sqrt{-1}$ viewed as an element of $\C$.
To prove \Cref{thm:main} we prove first the following

\begin{proposition}
  \label{lem:fields_not_embeddable}
  Assume that $\left| ab \right| \leq 5$, $n > 8$ and that $\Q \subsetneq K$ is a number field that is unramified away from $ab$ for which 
  \[
    v_p\left( \rd K \right) \leq v_p\left( ab \right).
  \]
  Then either 
  \begin{enumerate}
    \item $\left( a,b \right) \in \left\{ \left( \pm 4, -1 \right), \left( \pm 5, -1 \right) \right\}$, or
    \item the non-cyclotomic part of $f$ is reducible over $\Q$,
    \item $K$ can not be embedded into $\Q\left[ x \right]/ \left( f \right)$.
  \end{enumerate}
\end{proposition}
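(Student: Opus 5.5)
The approach is to enumerate all candidate fields $K$ and then rule each one out. First I list the possibilities for $K$. Since $|ab|\le 5$, the prime divisors of $ab$ lie in $\{2,3,5\}$, and in fact $ab$ has at most one prime divisor, because $|ab|\in\{2,3,4,5\}$. Hence $K$ is ramified at a single prime $p$ with $v_p(\rd K)\le v_p(ab)\le 2$. This gives $\rd K\le 4$ when $p=2$, $\rd K\le 3$ when $p=3$, and $\rd K\le 5$ when $p=5$. By Odlyzko's bounds (and the explicit LMFDB tables), a nontrivial $K$ with root discriminant this small has very small degree (at most $4$ or so). The list should be short: $\Q(i)$, $\Q(\sqrt{\pm 2})$, $\Q(\zeta_8)$, $\Q(\sqrt{-3})$, $\Q(\sqrt 5)$, $\Q(\zeta_5)$, and possibly a few non-Galois quartic or cubic fields, which I would identify from the LMFDB. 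For $|ab|=1$ there is no admissible $K\ne\Q$ at all, because $\Q$ has no unramified extensions, so that case is immediate.

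For each field $K$ on the list, I need an obstruction to embedding $K$ into $L=\Q[x]/(g)$, where $g$ is the non-cyclotomic part of $f$, assumed irreducible (otherwise alternative 2 holds). The main tool is local. An embedding $K\hookrightarrow L$ forces $L\otimes\Q_\ell$ to contain $K\otimes\Q_\ell$ in a compatible way for every place $\ell$, and in particular at $\infty$. I will use three checks.
\begin{itemize}
\item \emph{Signature.} If $K$ is imaginary (for example $\Q(i)$, $\Q(\sqrt{-3})$, $\Q(\sqrt{-2})$), then $L$ must be totally imaginary, so $g$ has no real root. A trinomial $x^n+ax^k+b$ has a real root whenever $n$ is odd, or when $b<0$, and so on. This rules out many sign combinations.
\item \emph{Real fields.} For real $K$ such as $\Q(\sqrt2)$ or $\Q(\sqrt5)$, I will look at a small unramified prime $\ell$ that is inert in $K$ (for example $\ell=3,5$ for $\sqrt2$, and $\ell=2,3$ for $\sqrt5$). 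Then every residue degree of $L$ at $\ell$ must be even, which means every irreducible factor of $g \bmod \ell$ must have even degree. For $f\bmod \ell$ with tiny coefficients, I will exhibit an irreducible factor of odd degree, ideally a linear one, i.e.\ a root in $\F_\ell$, for each residue class of $(n,k)$ modulo the relevant periods. Since $f\bmod\ell$ depends only on $n,k\bmod(\ell-1)$ for roots in $\F_\ell^\times$, this is a finite check.
\item \emph{Ramified prime.} At $p$ itself, the local factorization shape of $f$ over $\Q_p$, read off from the Newton polygon and the magnetic root analysis of \Cref{sec:local}, constrains which ramified local extensions occur. Concretely, if $f$ has a root in $\Q_p$ or an unramified factor of odd degree over $\Q_p$, then $K_\mathfrak{p}$ cannot embed into every completion of $L$ above $p$ when $p$ is totally ramified in $K$.
\end{itemize}

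The main obstacle is making the finite-field check uniform in $n$ and $k$. For some residue classes of $(n,k)$, $f\bmod\ell$ may have no roots in $\F_\ell$, and I will need a second auxiliary prime, or an odd-degree factor found through the $\F_{\ell^r}$-points. I will first try a combination of the primes $3,5,7$ and the place at infinity, splitting into cases by $n,k$ modulo $\operatorname{lcm}(\ell-1)$ and the parities of $n$ and $k$. The cases this fails to cover should be exactly the exceptional pairs $(\pm4,-1)$ and $(\pm5,-1)$ of alternative 1; presumably $\Q(i)$ or $\Q(\sqrt5)$ could genuinely embed there. I will verify that these exceptional pairs are the only residual cases, and otherwise conclude alternative 3.
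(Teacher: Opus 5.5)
Your skeleton matches the paper's: reduce to the few small quadratic fields from the LMFDB, exclude the imaginary ones by a real root, and exclude the rest locally. But the step that carries the content is left as a hoped-for finite check, and as designed it does not work.

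For the real fields ($\Q(\sqrt2)$ with $(a,b)=(\pm4,1)$, and $\Q(\sqrt5)$ with $(a,b)=(\pm5,1)$) you look for a root, or an odd-degree factor, of $g \bmod \ell$ at a prime $\ell$ inert in $K$. Two things go wrong.
\begin{enumerate}
\item Passing from ``all residue degrees of $L$ above $\ell$ are even'' to ``all irreducible factors of $g \bmod \ell$ have even degree'' is valid only for factors of multiplicity one. Nothing prevents $\ell$ from ramifying in $L$, and a repeated linear factor mod $\ell$ is compatible with residue degree $2$ (think of $x^2-18$ over $\Q_3$).
\item Whether a simple root exists depends on $(n,k)$ modulo $\ell(\ell-1)$, or modulo $\ell^r-1$ for factors of degree $r$. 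Nothing guarantees that finitely many $\ell$ cover all classes.
\end{enumerate}
Already for $f=x^{14}+4x+1$ your root test fails at both primes you name for $\Q(\sqrt2)$. Mod $3$ the only root is $1$, and it is a double root since $f'(1)=18$. Mod $5$ there is no root, since $f$ agrees with $x^2-x+1$ on $\mathbb{F}_5^\times$.

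The missing idea is \Cref{lem:imprimitivity_finite_obstruction}. Suppose $v_\ell(f(m))=1$ for some $m\in\Z$, and let $e$ be the multiplicity of $m$ as a root mod $\ell$. Then $f(x+m)$ has constant term of valuation $1$, and its coefficients in degrees below $e$ are divisible by $\ell$. Its Newton polygon therefore yields $e$ roots of valuation $1/e$. This gives a totally ramified local factor, hence a prime of $L$ above $\ell$ of residue degree $1$, whatever $e$ is; this contradicts $\ell$ being inert in $K$.

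Uniformity in $(n,k)$ comes from taking $m=\pm1$. You may assume $n$ even: for odd $n$, $L$ has odd degree $n$, because no root of unity is a root of $f$ when $|a|\ge 3$ and $|b|=1$. Then $k$ is odd and $f(\pm1)=2\pm a$ does not depend on $(n,k)$. For $|a|=4$ one of these values is $6$, so use $\ell=3$, which is inert in $\Q(\sqrt2)$. For $|a|=5$ the values are $7$ and $-3$, so use $\ell=7$ or $\ell=3$, both inert in $\Q(\sqrt5)$.

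This is also the actual source of the exceptional pairs. For $b=-1$ one gets $f(\pm1)=\pm a$, whose only prime factor ramifies in $K$. The field left open for $(\pm4,-1)$ is $\Q(\sqrt2)$, not $\Q(i)$, which the real root excludes since $b<0$. The paper's remark says the data suggest no embedding occurs in these cases either, so your guess that these fields genuinely embed is unfounded.

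On the imaginary side, your three checks as formulated also leave cases open.
\begin{enumerate}
\item For $(a,b)=(2,2)$, e.g.\ $f=x^{12}+2x^7+2$, there is no real root. Since $f$ is Eisenstein at $2$, its only $2$-adic factor is totally ramified of even degree. So neither signature nor your ramified-prime test excludes $\Q(i)$. The paper needs a global Ljunggren-type argument to show that $f$ stays irreducible over $\Q(i)$ (\Cref{lem:not_embeddable_Qi_if_a2_b2}). That argument plays the Euclidean norm of $f$ against $2$-adic information on the coefficients of a putative factor over $\Z[i]$.
\item For $(a,b)=(\pm1,\pm2)$ with $n$ odd, $k=n-1$ and $f(\pm1)=0$, e.g.\ $x^n+x^{n-1}-2$, the factor $g$ has no real root and is totally ramified of degree $n-1$ at $2$. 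The paper compares differents (\Cref{lem:Qi_not_embeddable_if_a1_b2}): $v_2(\mathcal{D}_L)\le v_2(f'(\alpha))=1=v_2(\mathcal{D}_{\Q(i)})$. Hence $L/\Q(i)$ would be unramified above $2$, contradicting $e=n-1>2$.
\end{enumerate}

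Two smaller points.
\begin{enumerate}
\item Your ramified-prime test should read ``some local factor of odd degree at a prime not split in $K$''. An example is the Newton-polygon segment of slope $-v_p(b)/k$ with $k$ odd. This is what handles $(\pm1,3)$ with $n$ even, where $f$ has no real root; the paper phrases it as irreducibility of $f$ over $K$ (\Cref{lem:no_splitting_irreducibility}).
\item For $|ab|=2$ the hypothesis bounds the root discriminant by $2$, not $4$, so only $\Q(i)$ has to be excluded there. In general it suffices to treat the minimal fields of \Cref{table}.
\end{enumerate}
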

\begin{remark}
  We chose the bound $\left| ab \right| \leq 5$ in \Cref{lem:fields_not_embeddable} since the LMFDB \cite{lmfdb} has a complete list of all number fields with root discriminant at most $5.989$.
  Statistical data suggests that \Cref{lem:fields_not_embeddable} is true even without excluding the cases $\left( a,b \right) = \left( \pm4, -1 \right), \left( a,b \right) = \left( \pm 5, -1 \right)$, but we failed to prove it for $K = \Q\left( \sqrt{2} \right)$ in the first case and for $K = \Q\left( \sqrt{5} \right)$ in the second one.
\end{remark}

To prove \Cref{lem:fields_not_embeddable} we present a collection of obstructions to the embeddability of $K$ into $\Q\left[ x \right]/ \left( f \right)$, some in the form of irreducibility results which will also be used in the proof of \Cref{thm:main} directly.

\begin{lemma}
  \label{lem:no_splitting_irreducibility}
  Let $K$ be a Galois number field and $p$ a rational prime and assume that:
  \begin{enumerate}
    \item $a = \pm 1$.
    \item $b$ is, up to a unit, a non trivial power of $p$.
    \item $p$ does not split in $K$.
    \item \label{item:valuation_coprimality}$
      1 = \gcd\left( 
        k, 
        v_p\left( b \right) \cdot \left[ v_p\left( K \right): \Z \right]
      \right)
    $.
    \item $n > 2$.
  \end{enumerate}
  Then one of two cases occurs
  \begin{enumerate}
    \item $f$ is irreducible over $K$.
    \item $b = \pm 2$ and $f$ factors into $x \pm 1$ times a polynomial that is irreducible over $K$.
  \end{enumerate}
\end{lemma}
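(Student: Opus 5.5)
The plan is a $p$-adic Newton polygon argument over the completion of $K$ at the unique prime $\mathfrak{p}$ above $p$, followed by a global descent. Since $p$ does not split in $K$, $K_{\mathfrak p}$ is a field. Let $e = \left[ v_p\left( K \right) : \Z \right]$ be its ramification index, and let $w$ be the normalized valuation of $K_{\mathfrak p}$, so $w(p) = e$. Put $t = v_p\left( b \right) \geq 1$.

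First, the Newton polygon of $f = x^n \pm x^k + b$ with respect to $w$ has vertices $(0, et)$, $(k, 0)$, $(n, 0)$. So it has one segment of slope $-et/k$ and horizontal length $k$, and one segment of slope $0$ and length $n-k$. By \cref{item:valuation_coprimality}, $\gcd(k, et) = 1$. Hence the first segment has no interior lattice points, and the corresponding degree-$k$ factor $f_1$ of $f$ over $K_{\mathfrak p}$ is irreducible and totally ramified. The second factor $f_2$ has degree $n-k$ and its roots are units. Modulo $\mathfrak p$, $f_2$ reduces to a factor of $x^{n-k} \pm 1$ over the residue field $\kappa$ of $K_{\mathfrak p}$.

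Second, globalize. Let $g$ be an irreducible factor of $f$ over $K$. Because $p$ has a unique prime above it in the Galois field $K$, the factorization of $g$ over $K_{\mathfrak p}$ is a sub-factorization of $f_1 f_2$. If some factor $g$ contains $f_1$, then $k \mid \deg$ of the corresponding local piece. The aim is to show that $f$ cannot have a factor over $K$ with all roots units at $\mathfrak p$, unless that factor is $x\pm1$ with $b=\pm2$. Suppose $h \mid f$ over $K$ has only unit roots at $\mathfrak p$. Then $h$ divides $f_2$, so $h(0)$ is a $\mathfrak p$-unit. Since this holds for every archimedean-free choice only at $\mathfrak p$, I would use instead a Ljunggren/Selmer-type argument. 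Any factor $h$ that misses the $\mathfrak p$-adically small roots has constant term a unit at $\mathfrak p$. Its cofactor contains $f_1$ and has constant term divisible by $b$ up to units. Here I would apply norms from $K$ to $\Q$: $N_{K/\Q}(h(0))$ divides $N(b) = b^{[K:\Q]}$ and is coprime to $p$. Since $b$ is up to a unit a power of $p$, $N(h(0)) = \pm 1$, so $h(0)$ is a global unit of norm $\pm1$. The roots of $h$ then have product of absolute values (over all embeddings) equal to $1$.

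Third, and this is the main obstacle, I must exclude such factors $h$ except for $x\pm1$. The approach I would try first is the classical trick for trinomials: compare $\sum_{\theta} \theta^{j}$ over the roots of $h$ using $f(x)\tilde f(x)$, where $\tilde f = x^n f(1/x)/b$. Alternatively, use the Mahler-measure/Rouché estimate that $f$ has exactly $k$ roots in a small disk only when $|b|$ is large. Concretely, with $|a| = 1$, a unit-constant factor $h$ whose roots all have $\prod|\theta| = 1$ in every conjugate must, by the Kronecker theorem applied after combining with the $p$-adic information that its roots are $\mathfrak p$-units, be heavily constrained. I expect to show that all roots of $h$ are roots of unity, which would give the cyclotomic case. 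Then $f(\zeta) = 0$ with $|b|\ge 2$ forces $|\zeta^n \pm \zeta^k| = |b| \le 2$, hence $|b| = 2$ and $\zeta^n = \pm\zeta^k$ is real with the same sign. That yields $\zeta = \pm 1$, $b = \pm 2$, and the factor $x\pm1$, which is simple because $f'(\pm1) \neq 0$. The remaining cofactor contains the totally ramified piece $f_1$ together with the rest of $f_2$. Its irreducibility over $K$ follows from the same argument applied to the cofactor: any proper factor would again be a unit-constant non-cyclotomic factor, which is excluded. The delicate point is establishing the "roots of unity" conclusion for unit-constant factors. Here I would fall back on the Ljunggren-style identity $h(x)\tilde h(x) = \tilde h$-reciprocity, adapted to $K$ by taking norms to $\Q$.
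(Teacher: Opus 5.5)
\paragraph{Assessment.} Your Steps 1 and 2 are essentially sound. The Newton polygon over $K_{\mathfrak p}$ splits off an irreducible degree-$k$ factor $f_1$, so a single irreducible factor $g_0$ of $f$ over $K$ carries the $k$ small roots. The cofactor $h=f/g_0$ then has only $\mathfrak p$-adic unit roots. Since $\mathfrak p$ is the only prime above $p$ and $h(0)$ divides $b=\pm p^t$ in $\mathcal{O}_K$, you correctly get $h(0)\in\mathcal{O}_K^\times$.

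The gap is Step 3, which you leave as an expectation rather than an argument. The Kronecker route does not work as stated. Kronecker's theorem needs \emph{every} conjugate of a root to have modulus at most $1$. But $N_{K/\mathbb{Q}}(h(0))=\pm1$ only says that the \emph{product} of the moduli of the roots of $\prod_\sigma\sigma(h)$ is $1$, and the $\mathfrak p$-adic information gives no archimedean control. The Ljunggren-type identity is never carried out.

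\paragraph{The missing observation.} You do not need all roots of $h$ to be roots of unity. A single root of modulus at most $1$ suffices, and the product formula you already wrote down provides one. If $h\neq1$, some root $\theta$ of some conjugate $\sigma(h)$, hence of $f$, has $|\theta|\le1$. Since $|a|=1$ and $|b|\ge2$,
\[
2\le|b|=|\theta^n+a\theta^k|\le|\theta|^n+|\theta|^k\le2 .
\]
This forces $|b|=2$ and $\theta^n=a\theta^k=-b/2\in\{\pm1\}$. Because $\gcd(n,k)=1$, it follows that $\theta=\pm1$, and being rational, $\pm1$ is then a root of $h$ itself. Now apply the same reasoning to $h/(x\pm1)$, which still has unit constant term. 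It cannot vanish at $\pm1$, since these are simple roots of $f$ and not both are roots. Hence $h\in\{1,x\pm1\}$, which finishes the proof. This is exactly the computation you did for roots of unity at the end; it works verbatim for any root in the closed unit disk.

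\paragraph{Comparison with the paper.} The paper organizes the same idea without norms. Because $K$ is Galois and $p$ is non-split, every $\sigma(g_0)$ with $\sigma\in\mathrm{Gal}(\overline{\mathbb{Q}}/\mathbb{Q})$ again has a constant term of positive $p$-adic valuation. Hence $\sigma(g_0)=g_0$ and $g_0\in\mathbb{Z}[x]$. The cofactor then lies in $\mathbb{Z}[x]$ with constant term $\pm1$, and the same triangle-inequality argument applies.
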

\begin{proof}
  Firstly, note that by considering the Newton polygon of $f$ with respect to $p$ it follows that $f$ has $k$ roots (counted with possible multiplicity, here and in the sequel of the proof) whose $p$-adic valuation is $v_p\left( b \right) / k$ and the rest are of $p$-adic valuation $0$.
  Therefore, the $y$-coordinates of the vertices of the Newton polygon of any factor of $f$ in $K\left[ x \right]$ is in $\frac{v_p\left( b \right)}{k} \Z \cap v_p\left( K \right)$. 
  According \Cref{item:valuation_coprimality} this intersection of groups is $v_p\left( b \right)\Z$.
  Consequently, any factor of $f$ annihilates either non or all of the $k$ positive valuation roots of $f$.
  Let then $f_0 \in K\left[ X \right]$ be an irreducible monic factor of $f$ that annihilates all of the $k$ positive valuation roots of $f$.
  It suffices to show that either $f / f_0 = 1$, or $b = \pm 2$ and $f / f_0 = x \pm 1$.

  We will first show that $f_0$ is in $\Q\left[ x \right]$ and consequently so is $f / f_0$.
  Let $\sigma \in \Gal\left( \overline{\Q}/ \Q \right)$ be an automorphism of $\overline{\Q}$ and denote by $\sigma$ the induced automorphism of $\overline{\Q}\left[ x \right]$ as well.
  It suffices to show that $\sigma\left( f_0 \right) = f_0$.
  Since $p$ does not split in $K$, $\sigma\left( f_0 \right)$ is also an irreducible factor $f$ with $k$ roots of positive $p$-adic valuation. 
  However, as mentioned $f$ has only $k$ such roots and hence it has at most one such irreducible factor and hence indeed $f_0 = \sigma\left( f_0 \right)$.

  If $f / f_0 = 1$ we are done.
  Assume otherwise and let $f_1$ be some factor of $f / f_0$ that is irreducible in $\Q\left[ x \right]$ and denote by $b_0$ and $b_1$ the free coefficients of $f_0$ and $f_1$ respectively.
  Since $f_0$ annihilates all of the positive $p$-adic valuation roots of $f$ it follows that $v_p\left( b_0 \right) = v_p\left( b \right)$, and hence, since $b_0 b_1$ divides $b$ in $\Z$ and since $b$ is assumed to be a $p$-power up to a unit, it follows that $b_1$ is a rational unit, i.e. that $b_1 = \pm 1$.
  Then $f_1$ has at last one root, say $\alpha$, within the closed complex unit disk.
  By the triangle inequality it thus follows that
  \[
    \left| b \right| \leq \left| \alpha^n \right| + \left| \alpha^k \right| \leq 2
  \]
  and hence, since we assume that $b$ is up to a unit a non trivial power of $p$, it follows that $b = \pm 2$, as we set to prove, and hence that 
  \[
    \alpha^k = \alpha^n = \pm 1.
  \]
  Since $1 = \gcd\left( k,n \right)$ it follows that $\alpha = \pm 1$ or equivalently, since $f_1$ is assumed to be irreducible that $f_1 = x \pm 1$.
  Finally, since $1$ and $=-1$ can not both be roots of $f$ and since none of them can be a double root of $f$ it follows that $f / f_0 = x \pm 1$ completing our proof.
\end{proof}

\begin{lemma}
  \label{lem:imprimitivity_finite_obstruction}
  Let $K$ be a number field and let $p$ be a prime such that the inertia degree of any prime ideal of $K$ above $p$ is greater than $1$.
  Assume that there is an integer $m \in \Z$ such that 
  \[
    v_p\left( f\left( m \right) \right) = 1
  \]
  Then $K$ cannot be embedded into $\Q\left[ x \right] / \left( f \right)$.
\end{lemma}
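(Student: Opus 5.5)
Suppose, towards a contradiction, that there is an embedding $K \hookrightarrow \Q\left[ x \right]/\left( f \right)$. The plan is to exhibit a place of $\Q\left[ x \right]/\left( f \right)$ above $p$ that has residue degree $1$, and to see that this is incompatible with every prime of $K$ above $p$ having inertia degree greater than $1$.

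First, reduce to a simple root of $f$ modulo $p$. Since $v_p\left( f\left( m \right) \right) = 1 > 0$, the residue $\bar m \in \F_p$ is a root of $\bar f$. It is a simple root: otherwise $f'\left( m \right) \equiv 0 \pmod p$, and the Taylor expansion
\[
  f\left( m + p \right) = f\left( m \right) + p f'\left( m \right) + p^2\left( \ldots \right)
\]
would give $f\left( m + tp \right) \equiv f\left( m \right) \pmod{p^2}$ for all $t$. In particular, no root of $f$ in $\Z_p$ could reduce to $\bar m$ with $v_p\left( f\left( m \right) \right) = 1$. That alone is not a contradiction, so instead I would invoke Dedekind--Kummer on $\Z_p\left[ x \right]/\left( f \right)$ directly.

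Second, the key step is to produce a factor $\left( x - m \right)$ of the completion. Consider $\Q_p\left[ x \right]/\left( f \right) = \prod_i L_{\mathfrak P_i}$. I would show that some factor has $e = f = 1$. If $\bar m$ is a simple root of $\bar f$, Hensel's lemma gives a root $\beta \in \Z_p$ of $f$, hence a factor $\Q_p$. If $\bar m$ is a multiple root, change variables by $x = m + py$. The Newton polygon of $f\left( m + py \right)$ then has its constant term of valuation exactly $1$, while every other coefficient has valuation at least $1$, because $p \mid f'\left( m \right)$ and the higher terms carry $p^2$. Hence $p^{-1} f\left( m + py \right)$ is integral with unit constant term, so its Newton polygon begins with a slope-$0$ segment. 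I would then further argue that $\tilde f\left( y \right) = p^{-1} f\left( m + py \right) \bmod p$ is a nonzero polynomial of degree at most $1$ in $y$, since for $j \geq 2$ the coefficient of $y^j$ has valuation at least $j - 0 \geq 2$. Its linear coefficient is $f'\left( m \right)$, and the argument needs $\tilde f$ to have a root in $\F_p$. If $v_p\left( f'\left( m \right) \right) = 1$, the reduction is $c_0 + c_1 y$ with $c_0, c_1$ units, which has a simple root in $\F_p$, and Hensel's lemma yields a $p$-adic root of $f$ in $m + p\Z_p$. The main obstacle is the remaining case $v_p\left( f'\left( m \right) \right) \geq 2$. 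There $\tilde f$ reduces to a nonzero constant, so no root of $f$ lies in $m + p\Z_p$, and this route fails. At this point I would instead use the index form of Dedekind's criterion: the condition $v_p\left( f\left( m \right) \right) = 1$ is exactly the Dedekind condition guaranteeing that $p \nmid \left[ \mathcal O : \Z\left[ \theta \right] \right]$ locally at the factor $x - \bar m$. Therefore the prime of $\Q\left[ x \right]/\left( f \right)$ corresponding to the factor $\left( x - \bar m \right)$ of $\bar f$ has residue field $\F_p$, which gives a prime $\mathfrak P$ with inertia degree $1$. This is the step I would expect to need care, namely identifying the residue field correctly when $\bar m$ is a multiple root.

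Finally, conclude. With $K \subseteq L_0$, where $L_0$ is the component field of $\Q\left[ x \right]/\left( f \right)$ containing the image of $K$ and $\mathfrak P$ lies on that component (choose the component through which the factor $x - \bar m$ passes), the prime $\mathfrak p = \mathfrak P \cap K$ satisfies $f\left( \mathfrak p / p \right) \mid f\left( \mathfrak P / p \right) = 1$. This contradicts the hypothesis that every prime of $K$ above $p$ has inertia degree greater than $1$.
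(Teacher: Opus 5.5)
Your final argument via the Kummer--Dedekind criterion is correct, but the two Hensel detours before it contain false claims. Delete them rather than patch them. The assertion that $\bar m$ is a simple root of $\bar f$ is false. So is your subcase $v_p(f'(m))=1$: once $\bar m$ is a multiple root, $p\mid f'(m)$, so the linear coefficient $c_1=f'(m)$ of $\tilde f$ is never a unit. Hence $\tilde f\equiv f(m)/p$ is a nonzero constant modulo $p$, whatever $v_p(f'(m))$ is. In the multiple-root case no root of $f$ lies in $m+p\mathbb{Z}_p$, and your target ``some factor has $e=f=1$'' cannot be reached. What holds is residue degree $1$: the local factor at $\bar m$ is totally ramified of degree equal to the multiplicity of $\bar m$. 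For instance, take $f=x^{14}+4x^{13}+1$, $p=3$, $m=1$, a case the paper actually needs. Then $f(1)=6$ and $v_3(f'(1))=v_3(66)=1$, but $\bar 1$ is a double root of $\bar f$ and the corresponding local factor is a ramified quadratic extension of $\mathbb{Q}_3$.

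So apply the Dedekind step uniformly; it covers the simple-root case too. The step you flag as delicate takes one line.
\begin{itemize}
\item Let $A=\mathbb{Z}[x]/(f)$, let $\theta$ be the image of $x$, and let $M=(p,\theta-m)$, so $A/M\cong\mathbb{F}_p$.
\item Writing $f(x)-f(m)=(x-m)q(x)$ gives $f(m)=-(\theta-m)q(\theta)$ in $A$. Since $f(m)/p\in\mathbb{Z}_{(p)}^{\times}$, we get $p\in(\theta-m)A_M$, hence $MA_M=(\theta-m)$.
\item As $A$ is finite and free over $\mathbb{Z}$, $A_M$ is a one-dimensional Noetherian local ring with principal maximal ideal, hence a DVR.
\item Therefore $A_M$ is the localization of $\mathcal{O}_{L_0}$ at a prime $\mathcal{P}$ with residue field $\mathbb{F}_p$. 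Here $L_0=\mathbb{Q}[x]/(g)$, where $g$ is the irreducible factor of $f$ with $\bar g(\bar m)=0$; it is unique because $A_M$ is a domain.
\end{itemize}
In Kummer--Dedekind terms: with the lift $x-m$, the remainder $r=(f-\prod_i g_i^{e_i})/p$ satisfies $r(m)=f(m)/p\not\equiv 0 \pmod p$.

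On the last step, there is no single ``component containing the image of $K$''. Rather, composing the given embedding with the projection $\mathbb{Q}[x]/(f)\to L_0$ gives $K\hookrightarrow L_0$, since any nonzero map out of a field is injective. This is what lets you work in $L_0$.

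With these repairs, your proof takes a genuinely different route from the paper's. The paper Hensel-lifts $\bar f=(x-\bar m)^e\bar g$ to $f=hg$ over $\mathbb{Z}_p$. It then reads off from the Newton polygon of $f(x+m)$ that a root of $f$ generates a totally ramified extension of $\mathbb{Q}_p$ of degree $e$; in effect, $h(x+m)$ is Eisenstein because $g(m)$ is a unit. Your route stays global, needs neither Hensel nor Newton polygons, and names the degree-one prime $(p,\theta-m)$ explicitly. The paper's route additionally gives the ramification index $e$. Both rest on the same fact: $\theta-m$ is a uniformizer at that prime.
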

\begin{proof}
  To show that $K$ can not be a subfield of $\Q\left[ x \right] / \left( f \right)$, it suffices to show that $f$ has a root that is contained in a purely ramified extension of $\Q_p$.
  Let then $e$ be the multiplicity of $m$ as a root of $f$ modulo $p$.
  Then there is a factorization 
    \[
      \pi\left( f \right) = \left( x-m \right)^e \overline{g} \in \mathbb{F}_p\left[ x \right]
    \]
    for some $\overline{g} \in \mathbb{F}_p\left[ x \right]$ that is coprime to $\left( x-m \right)$ where $\pi : \Z_p\left[ x \right] \to \mathbb{F}_p\left[ x \right]$ is the quotient map.
    By Hensel's lemma, there is a factorization
    \[
      f = h g \in \Z_p\left[ x \right]
    \]
    where $\pi\left( h \right) = \left( x-m \right)^e$ and $\pi\left( g \right) = \overline{g}$.
    Consider now the polynomial 
    \[
      f\left( x + m \right) = 
      h\left( x + m \right)g\left( x + m \right) = x^n + \sum_{i=0}^{n-1} c_i x^i \in \Z\left[ x \right]  
    \]
    Then 
    \[
      v_p\left( c_0 \right) = v_p\left( f\left( m \right) \right) = 1
    \]
    and 
    \[
      \pi\left( f\left( x + m \right) \right) = 
      \pi\left( h\left( x + m \right) \right) \pi\left( g\left( x + m \right) \right) =
      x^e \overline{g}\left( x + m \right) \in \mathbb{F}_p\left[ x \right]
    \]
    and hence 
    \[
      v_p\left( c_i \right) \geq 1
    \]
    for all $i < e$.
    Therefore, the Newton polygon of $f\left( x + m \right)$ with respect to $p$ has a single segment of negative slope and its slope is at most $1/e$ in absolute value.
    Therefore, $f\left( x + m \right)$ has a root, call it $\beta$, whose $p$-adic valuation is at most $1/e$.
    Therefore, since the set of roots of $f\left( x + m \right)$ that reduce to $0$ modulo $p$ is exactly the set of roots of $h\left( x + m \right)$, whose degree is $e$, it follows that the ramification degree of $\beta$ over $\Q_p$ is exactly its over all algebraic degree over $\Q_p$ and hence $\Q_p\left( \beta \right) / \Q_p$ is purely ramified.
    This shows that there is an embedding, in particular the one extending the assignment $x \mapsto \beta$, of $\Q\left[ x \right] / \left( f \right)$ into a purely ramified extension of $\Q_p$  
\end{proof}

\begin{lemma}
  \label{lem:not_embeddable_Qi_if_a2_b2}
  Assume that $\left| a \right| = \left| b \right| = 2$ and that $n > 4$.
  Then $f$ is irreducible over $\Q\left( i \right)$ where $i = \sqrt{-1}$.
\end{lemma}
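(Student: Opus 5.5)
The plan is to show first, via the $2$-adic Newton polygon, that $f$ can only factor over $\Q(i)$ as a product of two Galois-conjugate halves, and then to rule that shape out by arithmetic obstructions.

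\textbf{Step 1 (Newton polygon).} Write $a = 2\varepsilon$ and $b = 2\eta$ with $\varepsilon,\eta\in\{\pm1\}$. With respect to $p=2$, the points of the Newton polygon are $(0,1)$, $(k,1)$ and $(n,0)$. The point $(k,1)$ lies above the segment from $(0,1)$ to $(n,0)$, so the polygon is a single segment of slope $-1/n$, and every root $\alpha$ of $f$ has $v_2(\alpha)=1/n$. In particular $f$ is Eisenstein at $2$ over $\Q$, so it is irreducible over $\Q$.

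Over $\Q_2(i)$ the value group is $\tfrac12\Z$. A monic factor of $f$ over $\Q(i)$ of degree $d$ has constant term of valuation $d/n$, so $d/n\in\tfrac12\Z$. Hence $d\in\{n/2,n\}$. If $n$ is odd we are done. If $n$ is even, then $k$ is odd, and the only possible factorization is $f=g\cdot\overline g$ with $g\in\Z[i][x]$ monic of degree $n/2$ and $\overline g$ its complex conjugate. This is equivalent to $\Q(i)\hookrightarrow \Q[x]/(f)$.

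\textbf{Step 2 (norm obstructions).} Assume $f=g\overline g$. Then for every integer $m$ we have $f(m)=|g(m)|^2=N_{\Q(i)/\Q}(g(m))$. So $f(m)$ is a nonnegative sum of two squares, and in particular every prime $q\equiv 3 \pmod 4$ divides $f(m)$ to an even power. Moreover $f$ has no real roots, since $\Q(i)\subseteq L$ forces $L$ to be totally complex.

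\textbf{Step 3 (applying the obstructions).}
\begin{enumerate}
\item Taking $m=0$ gives $f(0)=2\eta$, which immediately kills the case $\eta=-1$. So assume $b=2$.
\item Taking $m=\pm1$ gives $f(\pm1)\in\{1,5\}$, which yields nothing.
\item Taking $m=\pm2$ gives $f(\pm2)=2\bigl(2^{n-1}\pm\varepsilon 2^{k}+1\bigr)$. For $k=1$ and the appropriate choice of sign, the odd factor is $\equiv 3\pmod 4$, a contradiction. The sign is $m=2$ when $\varepsilon=1$ and $m=-2$ when $\varepsilon=-1$.
\item For larger $k$, I would combine two further tools. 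The first is the real-root obstruction: for $x=-\varepsilon t$ the polynomial becomes $t^n-2t^k+2$, which is negative for some $t\in(1,2^{1/(n-k)})$ whenever $k$ is large compared with $n-k$. The second is the reversed trinomial $2x^n+\varepsilon 2x^{n-k}+1$, which must also be a norm of its own reversed factorization. For it I would evaluate at suitable integers and look modulo $3$ or $8$, or for a prime $q\equiv3\pmod4$ dividing a value exactly once. Lemma~\ref{lem:imprimitivity_finite_obstruction} applied with $K=\Q(i)$ and $p\equiv 3\pmod 4$ packages exactly this last argument.
\end{enumerate}

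\textbf{Main obstacle.} The hard step is finding, uniformly in even $n>4$ and odd $k$, an integer $m$ and a prime $q\equiv 3\pmod 4$ with $v_q(f(m))=1$. Alternatively, one needs a sharper $2$-adic argument. For that I would try comparing $f=g\overline g$ modulo $(1+i)^3$, using that $g$ is Eisenstein with respect to $1+i$ and that the middle coefficient $2\varepsilon x^k$ with $k$ odd cannot arise as $g\overline g$ modulo $4$. My first attempt would be this $2$-adic congruence. I would look at the coefficient of $x^k$ in $g\overline g$ modulo $4\Z[i]$, where the cross terms pair up as $c\overline{c'}+\overline c c'=2\,\mathrm{Re}(c\overline{c'})$, and check whether the parity of $k$ forces that coefficient to vanish modulo $4$, contradicting $a=\pm2$.
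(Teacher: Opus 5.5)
\textbf{Verdict: there is a genuine gap.} The case $n/2<k<n$ is not proved, and the tools you list for it do not close it.

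\textbf{What works.} Your Step 1 is correct and matches the paper's reduction to $f=g\overline{g}$, with $g=x^{n/2}+\sum_{j<n/2}c_jx^j\in\mathbb{Z}[i][x]$ Eisenstein at $1+i$. Steps 2 and 3 correctly dispose of $b=-2$ and $k=1$. Carried out, your mod-$4$ idea also settles every $k<n/2$. Each term of the $x^k$-coefficient of $g\overline{g}$ then involves two non-leading coefficients, and these terms pair up as $2\,\mathrm{Re}(c_s\overline{c_t})\in 4\mathbb{Z}$. Since $k$ is odd there is no diagonal term, so the coefficient is $\equiv 0 \pmod{4}$ rather than $\pm 2$.

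\textbf{The gap.} For $n/2<k<n$ the $x^k$-coefficient also contains $c_{k-n/2}+\overline{c_{k-n/2}}$, coming from the leading $1$ of $g$. This is $\equiv 2\pmod{4}$ exactly when $v_2(c_{k-n/2})=1/2$, so the congruence only tells you $v_2(c_{k-n/2})=1/2$. No mod-$4$ comparison, even of all coefficients, can finish by itself. For example, $g=x^4+(1+i)(x^3+x^2+1)$ gives $g\overline{g}=x^8+2x^7+4x^6+4x^5+4x^4+4x^3+4x^2+2\equiv x^8+2x^7+2\pmod{4}$. The other tools in item 4 of Step 3 are not carried out. The real-root test works only when $k$ is close to $n$; it fails, e.g., for $n=8$, $k=5$. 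You also give no uniform choice of $m$ and $q\equiv 3\pmod{4}$ with $v_q(f(m))=1$.

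\textbf{The missing idea and how it completes your argument.} You need an archimedean constraint on $g$, which the paper supplies through Ljunggren's identity $\lVert g(x)g(x^{-1})\rVert^2=\lVert f\rVert^2=9$. The coefficient of $x^{\pm(n/2-j)}$ in $g(x)g(x^{-1})$ is $\equiv c_j\pmod{2}$. So every $j$ with $v_2(c_j)=1/2$ produces two coefficients of squared modulus at least $2$, and at most two such $j$ can exist. The paper gets $j=0$, and from the vanishing $x^{n/2}$-coefficient of $f$ modulo $2+2i$ it gets both $4\mid n$ and $j=n/4$. Your own $x^k$ computation then supplies a third index $j=k-n/2$. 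It is distinct from $0$ and $n/4$ because $\gcd(n,k)=1$ and $n>4$, giving $\lVert g(x)g(x^{-1})\rVert^2\ge 12>9$.

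\textbf{A caution about the paper's version.} The paper instead claims $j=n/8$ as the third index by repeating its argument. At the $x^{n/4}$-coefficient, however, the cross term $c_0\overline{c_{n/4}}+\overline{c_0}c_{n/4}$ is $\equiv 0\pmod{4}$, so the same computation actually forces $v_2(c_{n/8})\ge 1$. Your index $k-n/2$ is the one that makes the count work.

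\textbf{An alternative within your own toolkit.} Compare all coefficients modulo $4$, not just the $x^k$ one. This shows that $D=(g-x^{n/2})/(1+i)\bmod (1+i)$ satisfies $x^{n/2}D(x)+D(x^2)=1+x^k$ in $\mathbb{F}_2[x]$ with $D(0)=1$. Together with $\gcd(n,k)=1$, this forces $n$ to be a power of $2$ and $k=n-1$. In that case $f(-\varepsilon t)=t^n-2t^{n-1}+2$ has a positive root, so your real-root test finishes.
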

\begin{remark}
  The following proof is inspired by a technique that was used first, it seems, by Ljunggren \cite{Ljungren1960} in another context and which I learned about from an expository paper by Conrad about the Irreducibility of $x^n - x - 1$ \cite{conradirreducibility}.
\end{remark}
\begin{proof}
  Denote by $\overline{\left( - \right)}$ the non trivial automorphism of $\Q\left( i \right)$.
  Assume toward a contradiction that $f$ is reducible over $\Q\left( i \right)$.
  Since it is integral and irreducible over $\Z$, it must factor as $f = f_0 \overline{f_0}$ for some irreducible 
  \[
    f_0 = x^{n/2} + \sum_{j = 0}^{n/2 - 1} c_j x^j \in \Z\left[ i \right]\left[ x \right]
    .
  \]
  
  For any polynomial $h \in K\left[ x \right]$, the Euclidean norm of $h$ is denoted by $\left\lVert h \right\rVert$ and defined to be the Euclidean norm of the coefficients' vector of $h$, which also happens to be the square root of the free coefficient of $h\left( x \right) \overline{h \left( x^{-1} \right)}$.
  In particular, since 
  \[
    f\left( x \right) \cdot \overline{f \left( x^{-1} \right)} 
    =
    f_0\left( x \right)f_0\left( x^{-1} \right) \cdot 
    \overline{\left( f_0\left( x \right)f_0\left( x^{-1} \right) \right) \circ x^{-1}}
  \]
  it follows that
  \begin{equation}
    \label{eq:euclidean_norm}
    9
    = 
    \left\lVert f \right\rVert^2
    =
    \left\lVert f_0\left( x \right)f_0\left( x^{-1} \right) \right\rVert^2.
  \end{equation}

  On the other hand, from ramification theory, it follows that $f_0$ is Eisenstein with respect to the prime ideal $\left( 1 + i \right) \subseteq \Z\left[ i \right]$, and hence $v_{2}\left( c_i \right) \geq 1/2$ for each $i = 0, \ldots, n / 2 - 1$ and $v_2\left( c_0 \right) = 1/2$.
  A direct calculation thus shows that for each $0 \leq j < n / 2$ for which $v_2\left( c_j \right) = 1/2$, the $2$-adic valuation of the $j$-th and the $-j$-th coefficients of $f_0\left( x \right)f_0\left( x^{-1} \right)$ is $1/2$ as well, and in particular the squared absolute value of each of these coefficients is at least $2$.
  Therefore, to derive a contradiction to \cref{eq:euclidean_norm} it suffices to show that there are at least three such $j$-values. 

  Since $2 = c_0 \overline{c_0}$, $0$ is one such $j$-value.
  We will deduce that there are at least two more such $j$-values by using twice the fact that for any $\alpha \in \Z\left[ i \right]$ of $2$-adic valuation $1/2$, 
  $$\alpha + \overline{\alpha} \equiv 2 \mod \left( 2 + 2i \right).$$
  Indeed, since $n$ is not coprime to $n/2$, it follows that $n/2 \neq k$ and hence the $n/2$-th coefficient of $f$ is $0$.
  However, the $n/2$-th coefficient of $f$ can be expressed as
  \[
    c_0 + \overline{c_0} + \sum_{j = 1}^{n/2 - 1} c_j \overline{c_{n/2 - j}} 
    \equiv
    c_0 + \overline{c_0} + c_{n/4} \overline{c_{n/4}}
    \equiv
    2 + c_{n/4} \overline{c_{n/4}}    
    \mod \left( 2 + 2i \right)
  \]
  when $c_{n/4}$ is to be interpreted as $0$ if $n/4$ is not an integer.
  However, for the $n/2$-th coefficient of $f$ to be $0$, it must in particular be congruent to $0$ modulo $\left( 2 + 2i \right)$ implying that $$c_{n/4} \overline{c_{n/4}} \equiv 2 \mod \left( 2 + 2i \right)$$ and hence $v_2\left( c_{n/4} \right) = 1/2$ and $n/4$ is a second such $j$-value.
  Since $n > 4$, $n$ and $n/4$ are also not coprime and hence the same argument can be repeated to show that $n/8$ is a third such $j$-value, concluding the desired contradiction.
\end{proof}

\begin{lemma}
  \label{lem:factorization_of_f}
  Assume that $n > 7, \left| a \right| = 2, \left| b \right| = 1$.
  Then one of two cases occurs:
  \begin{enumerate}
    \item $f\left( 1 \right)f\left( -1 \right) \neq 0$ and $f$ is irreducible.
    \item $f\left( 1 \right)f\left( -1 \right) = 0$ and $f$ factors into $x \pm 1$ times an irreducible factor.
  \end{enumerate}
\end{lemma}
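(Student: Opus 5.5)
We plan to apply \Cref{lem:no_splitting_irreducibility} with $K = \Q$, $p = 2$ and the roles of the coefficients reversed, after passing to the reciprocal polynomial. Concretely, $f = x^n + ax^k + b$ with $|a| = 2$, $|b| = 1$ has reciprocal $\tilde f(x) = b^{-1}x^n f(1/x) = x^n + ab^{-1}x^{n-k} + b^{-1}$. Its middle coefficient is $\pm 2$ and its constant term is $\pm 1$, so the lemma does not apply verbatim. Instead we will run the same argument as in its proof, using the $2$-adic Newton polygon. The Newton polygon of $f$ at $2$ consists of the points $(0,0)$, $(k,1)$, $(n,0)$, so it is a single horizontal segment: all roots are $2$-adic units. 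That gives no splitting of valuations.

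We therefore use Ljunggren's method instead, in the spirit of \Cref{lem:not_embeddable_Qi_if_a2_b2}, which is the natural tool for trinomials with small coefficients. Write $f = f_1 \cdots f_r$ over $\Z$ with monic irreducible factors. For each factor set $\tilde f_j(x) = x^{\deg f_j} f_j(1/x)$. Then $f(x)f(1/x) = \prod_j f_j(x)\tilde f_j(x)\cdot x^{-n}$ up to sign. The key step is the Ljunggren-style identity: if some factor $f_j$ is not self-reciprocal up to sign, then replacing it by $\tilde f_j$ gives a polynomial $F(x) = \pm f_1\cdots \tilde f_j \cdots f_r$ with $F(x)F(1/x) = f(x)f(1/x)$. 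In particular $\|F\|^2 = \|f\|^2 = 6$. We then plan to show, by comparing coefficients of $f(x)f(1/x) = x^{-n}(\ldots)$, whose nonzero terms are $6$, $\pm2 ab\,x^{\pm k}$-type terms and $\pm b x^{\pm n}$, $\pm a\, x^{\pm(n-k)}$, that any integer polynomial $F$ with $F(x)F(1/x)=f(x)f(1/x)$ and coefficient-square-sum $6$ must be one of $\pm f$ or $\pm \tilde f$. We do this by the standard case analysis on the possible supports of $F$: one coefficient $\pm2$ and two $\pm1$, with exponents forced by the exponents of $f(x)f(1/x)$. This forces $f_j$ to be irrelevant, i.e. $F=\pm f$ or $\pm\tilde f$. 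The usual consequence is that the non-reciprocal part of $f$ is irreducible.

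The remaining step is to control the reciprocal factors. If $g \mid f$ is self-reciprocal, then $g$ divides $\gcd(f,\tilde f)$, hence divides $b f - x^{n}\cdot(\ldots)$-type combinations. For instance, $x^{n}f(1/x)\cdot b - f\cdot(\text{const})$ eliminates terms and leaves a binomial-like expression such as $a(x^{n-k} - x^k)$ times units, possibly plus $(b^2-1)x^n$. Since $b^2=1$, the common roots satisfy $x^{n-k}=x^k\cdot\epsilon$ and $x^n = \epsilon'$. With $\gcd(n,k)=1$ they are roots of unity of small order. Substituting back into $f$ with $|a|=2$ then forces the root to be $\pm1$, possibly with order $3$ or $6$ excluded by the $|a|=2$ size check $|\zeta^n+b| \le 2 = |a\zeta^k|$, with equality only if $\zeta^n=b$, $\zeta^k = -\operatorname{sgn}$, giving $\zeta=\pm1$. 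We also check that $\pm1$ is a simple root and that $1$ and $-1$ are not both roots, exactly as at the end of \Cref{lem:no_splitting_irreducibility}.

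The main obstacle is the coefficient-matching step. We need to show that the exponent set $\{0,k,n\}$ is recovered from $f(x)f(1/x)$, and this is where $n>7$ is needed, to avoid coincidences among $k, n-k, n, 2k-n$ and similar exponents. We would first list the exponents $\pm k,\pm n,\pm(n-k)$ and argue they are pairwise distinct and nonzero when $\gcd(n,k)=1$ and $n>2$. Small $n$ are then the only exceptions to handle.
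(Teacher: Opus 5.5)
\textbf{There is a genuine gap in your coefficient-matching step, and the statement you would be proving is false at $n=7$.} From $\|F\|^2 = 6$ you conclude that $F$ has one coefficient $\pm 2$ and two coefficients $\pm 1$. But $6 = 1+1+1+1+1+1$ as well, so $F$ may be a hexanomial whose six coefficients are all $\pm 1$.

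Your trinomial case is fine. Since $F = g\tilde h$ has leading and constant coefficients $\pm1$, the $\pm2$ must sit in the middle, and matching the exponents $\{k,n-k,n\}$ forces $F = \pm f$ or $F = \pm\tilde f$.

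The hexanomial case, however, is the whole difficulty, and it really occurs. For $n=7$, $k=3$ one has
\[
  x^7 + 2x^3 - 1 = (x^3 + x^2 - 1)(x^4 - x^3 + x^2 + 1),
\]
with $f(1)f(-1) = -8 \neq 0$ and both factors non-reciprocal. Taking $F = g\tilde h = x^7 + x^6 + x^5 - x^4 + x - 1$, the two products agree:
\[
  F(x)F(x^{-1}) = 6 - 2(x^3+x^{-3}) + 2(x^4+x^{-4}) - (x^7+x^{-7}) = f(x)f(x^{-1}).
\]

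\textbf{Where $n>7$ actually enters.} Your account is not right. The exponents $\pm k, \pm(n-k), \pm n$ of $f(x)f(x^{-1})$ are pairwise distinct and nonzero for every $n>2$ coprime to $k$. So your argument as written never uses $n>7$, and it would ``prove'' irreducibility of $x^7+2x^3-1$.

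\textbf{What a repair requires.} You would have to show that no $\pm1$ hexanomial $F$ of degree $n>7$ satisfies
\[
  F(x)F(x^{-1}) = 6 + ab(x^k+x^{-k}) + a(x^{n-k}+x^{k-n}) + b(x^n+x^{-n}).
\]
This is a real combinatorial classification. One can start from two observations about the support $0=e_1<\dots<e_6=n$ of $F$. First, every positive difference other than $n$ must occur an even number of times. Second, this forces $e_2+e_5=n$. But it does not reduce to a quick check.

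The paper does not attempt this by hand. It quotes Filaseta's computational classification of small-norm polynomials with reducible non-reciprocal part to get irreducibility of the non-cyclotomic part. It then only carries out the root-of-unity argument, which your last paragraph reproduces correctly: $|\zeta^n \pm 1| = 2$ forces $\zeta^n,\zeta^k\in\{\pm1\}$, hence $\zeta=\pm1$, and these roots are simple and not simultaneous.
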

\begin{proof}
  From \cite[p. 248]{Filaseta2019ComputationallyCP} it follows that the non cyclotomic part of $f$ is irreducible.
  However, if $\zeta$ is a root of unity and 
  $
    \left| \zeta^n \pm 1 \right| = \left| 2\zeta^k \right|
  $
  then 
  $
    \zeta^n,\zeta^k \in \left\{ \pm 1 \right\}
  $
  and hence $\zeta = \pm 1$.
  Moreover, it is easy to see that $\pm 1$ can not both be roots of $f$ and that neither of them is a double root and hence the cyclotomic part of $f$ is either $1$ or $x \pm 1$.
\end{proof}
\begin{lemma}
  \label{lem:Qi_not_embeddable_if_a2_b1}
  Assume that $n > 7, \left| a \right| = 2, \left| b \right| = 1$.
  Then $\Q\left( i \right)$ can not be embedded into $\Q\left[ x \right]/ \left( f \right)$.
\end{lemma}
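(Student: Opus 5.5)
The plan is to use an archimedean obstruction: we show that the field in question has a real embedding, so it cannot contain $\Q\left( i \right)$. Since $f$ need not be irreducible, we first apply \Cref{lem:factorization_of_f}. It gives $f = g$ irreducible when $f\left( 1 \right)f\left( -1 \right) \neq 0$, and $f = \left( x - c \right) g$ with $c \in \left\{ \pm 1 \right\}$ and $g$ irreducible otherwise. The meaningful statement concerns the field $\Q\left[ x \right]/\left( g \right)$ attached to the non-linear irreducible factor; the factor $x - c$ only contributes a copy of $\Q$. If $g$ has a real root $\beta$, then $x \mapsto \beta$ embeds $\Q\left[ x \right]/\left( g \right)$ into $\R$. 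Its image contains no square root of $-1$, so $\Q\left( i \right)$ cannot embed. The proof thus reduces to showing that $g$ has a real root, via the intermediate value theorem. We split into cases according to the parity of $\deg g$ and the value $g\left( 0 \right)$.

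Case $f\left( 1 \right) f\left( -1 \right) \neq 0$, so $g = f$. If $n$ is odd, $f$ has odd degree and hence a real root. If $n$ is even and $b = -1$, then $f\left( 0 \right) = -1 < 0$ while $f\left( t \right) \to +\infty$, giving a real root. If $n$ is even and $b = 1$, then $k$ is odd because $\gcd\left( n,k \right) = 1$. Hence $f\left( 1 \right) = 2 + a$ and $f\left( -1 \right) = 2 - a$, one of which vanishes because $\left| a \right| = 2$. This contradicts the case assumption, so the subcase does not occur.

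Case $f\left( c \right) = 0$ with $c = \pm 1$, so $g = f/\left( x - c \right)$ has degree $n - 1$. If $n$ is even, $g$ has odd degree and therefore a real root. If $n$ is odd, $g$ is monic of even degree, so it suffices to find a point where $g$ is negative; the natural choice is $t = 0$. We have $g\left( 0 \right) = f\left( 0 \right)/\left( -c \right) = -b/c$. For $c = 1$, the equation $1 + a + b = 0$ with $\left| a \right| = 2$, $\left| b \right| = 1$ forces $a = -2$, $b = 1$, so $g\left( 0 \right) = -1$. For $c = -1$, the equation $-1 + a\left( -1 \right)^k + b = 0$ forces $a\left( -1 \right)^k = 2$ and $b = -1$, so $g\left( 0 \right) = -1$ again. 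In both subcases $g\left( 0 \right) < 0$ while $g\left( t \right) \to +\infty$ as $t \to \pm\infty$, so $g$ has a real root.

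The only delicate points are bookkeeping. First, one must make sure that the relevant root lies in the non-linear factor $g$ rather than being $c$ itself; computing $g\left( 0 \right)$ directly handles this. Second, one must check that the constraint $\left| a \right| = 2$, $\left| b \right| = 1$ pins down the sign of $b$ relative to $c$. I expect no real obstacle beyond this. Should a purely arithmetic argument be preferred, a fallback is \Cref{lem:imprimitivity_finite_obstruction} with an inert prime $p \equiv 3 \pmod 4$. The difficulty there is exhibiting an integer $m$ with $v_p\left( f\left( m \right) \right) = 1$ uniformly in $n$ and $k$, which is why the archimedean route is the one I would follow.
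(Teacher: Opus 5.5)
Your proof is correct. In the irreducible case ($f(1)f(-1)\neq 0$) it is the paper's argument: $f$ must have a real root, because the only sign pattern without one forces $f(1)f(-1)=0$.

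The two proofs differ in the reducible case with $n$ odd. The paper does not look for a real root of $g$ there. It notes that $f \equiv x^n+1 \pmod 2$ is separable over $\mathbb{F}_2$ for odd $n$. Hence $2$ is unramified in $\Q[x]/(g)$, which excludes $\Q(i)$ because $\Q(i)$ is ramified at $2$. You instead use $f(c)=0$ to fix $b$ relative to $c$ and get $g(0)=-b/c=-1$. Then the monic, even-degree $g$ has a real root.

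Your route is purely archimedean and shows that $\Q[x]/(g)$ always has a real place, so no totally imaginary field embeds into it. The paper's route avoids the sign bookkeeping. In that subcase it also gives a different kind of information: no field ramified at $2$ embeds, whatever its signature.

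Both arguments prove the statement for the field attached to the non-linear factor $g$, which is how the lemma is used later.
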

\begin{proof}
  Assume first that $f\left( 1 \right)f\left( -1 \right) \neq 0$ and hence by \Cref{lem:factorization_of_f} $f$ is irreducible.
  It suffices to show that $f$ has a real root.
  However, for $f$ to not have a real root, $n$ must be even $k$ must be odd and $b$ must be $1$ and it can be verified that in such a case $0 = f\left( 1 \right)f\left( -1 \right)$ which is a contradiction.
  
  Assume now that $f\left( 1 \right)f\left( -1 \right) = 0$ and let $g$ be the non-linear factor of $f$.
  For $g$ to not have a real root, $n$ must be odd.
  However, in this case $f$ is separable over $\mathbb{F}_2$ and hence so does $g$ and hence $L:= \Q\left[ x \right] / \left( g \right)$ is unramified above $2$ and in particular can not contain $\Q\left( i \right)$.
\end{proof}

\begin{lemma}
  \label{lem:Qi_not_embeddable_if_a1_b2}
  Assume that $n > 3, \left| a \right| = 1, \left| b \right| = 2$.
  Then $\Q\left( i \right)$ can not be embedded into $\Q\left[ x \right]/ \left( f \right)$.
\end{lemma}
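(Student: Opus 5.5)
The plan is to show that, for each irreducible factor $g$ of $f$ with $L=\Q[x]/(g)$, the field $L$ cannot contain $\Q(i)$. We split according to the parity of $k$. Since $\gcd(n,k)=1$, at most one of $n,k$ is even.

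\emph{Case $k$ odd.} We apply \Cref{lem:no_splitting_irreducibility} with $K=\Q(i)$ and $p=2$. Its hypotheses hold:
\begin{itemize}
\item $a=\pm1$, and $b=\pm2$ is a nontrivial power of $2$;
\item $2$ ramifies in $\Q(i)$, so it does not split;
\item $v_2(\Q(i))=\tfrac12\Z$, so the index is $2$ and $\gcd(k,\,v_2(b)\cdot 2)=\gcd(k,2)=1$;
\item $n>2$.
\end{itemize}
The lemma then says that either $f$ is irreducible over $\Q(i)$, or $b=\pm2$ and $f=(x\pm1)g$ with $g$ irreducible over $\Q(i)$. In either case the nonlinear factor $g$ stays irreducible over $\Q(i)$, so $L\otimes_\Q\Q(i)$ is a field. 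If $\Q(i)\subseteq L$, this tensor product would instead be $L\times L$. Hence $\Q(i)$ does not embed into $L$, and it certainly does not embed into a factor $\Q$.

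\emph{Case $k$ even, so $n$ odd.} We work $2$-adically. The Newton polygon of $f$ at $2$ has a segment of slope $-1/k$ of horizontal length $k$, with lattice-point gcd $1$. Therefore the corresponding $\Q_2$-factor $h_0$ is irreducible and totally ramified of degree $k$. The remaining $n-k$ roots are units and reduce to roots of $x^{n-k}+1$ in $\mathbb F_2[x]$. Since $n-k$ is odd, this polynomial is separable, so these roots generate unramified extensions of $\Q_2$.

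If $\Q(i)\subseteq L$, then every completion of $L$ above $2$ contains $\Q_2(i)$, which is ramified. So no $\Q_2$-factor of $g$ may be unramified, and $g$ must equal the unique $\Q$-irreducible factor $g_0$ divisible by $h_0$, with $\deg g_0=k$. Repeating the argument in the proof of \Cref{lem:no_splitting_irreducibility}, the complementary factor $f/g_0$ has constant term $\pm1$. This forces $f/g_0=x\pm1$, so $k=n-1$.

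The remaining configuration is $f=x^n+ax^{n-1}+b$ with $n$ odd and $f(\pm1)=0$, which forces $(a,b)=(1,-2)$, i.e.\ $f=x^n+x^{n-1}-2=(x-1)g_0$. Here $g_0$ is totally ramified at $2$ of even degree $n-1$, and it has no real root, since one checks that $f$ has only the real root $1$. So neither the $2$-adic nor the real obstruction applies directly, and I expect this to be the main obstacle.

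For this residual case I would first try \Cref{lem:imprimitivity_finite_obstruction} at a prime $p$ that is inert in $\Q(i)$, i.e.\ $p\equiv3\pmod4$, say $p=3$. Concretely, I would search for $m\in\Z$ with $v_3(f(m))=1$, for example $m=-1$, where $f(-1)=-2$, or a small $m$ depending on $n\bmod 2$ via Fermat's little theorem. Since that lemma is stated for $f$ itself, I would check that the totally ramified root it produces is a root of $g_0$ and not of the factor $x-1$, so that the argument applies to $L=\Q[x]/(g_0)$. As a fallback I would consider the action of complex conjugation, or a norm computation $N_{\Q(i)/\Q}$ of a hypothetical factorization $g_0=g_1\overline{g_1}$ in the style of \Cref{lem:not_embeddable_Qi_if_a2_b2}, comparing Euclidean norms.
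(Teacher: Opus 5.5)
Your treatment of $k$ odd (via \Cref{lem:no_splitting_irreducibility} with $K=\Q(i)$, $p=2$, plus the tensor-product remark) is correct. So is your $2$-adic reduction of the case $k$ even to $k=n-1$ with $f(\pm1)=0$, and both follow the same lines as the paper. You overlook $(a,b)=(-1,2)$, but $x\mapsto -x$ reduces it to $(1,-2)$.

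The genuine gap is the residual case $f=x^n+x^{n-1}-2=(x-1)g_0$. This is the real content of the lemma, and you leave it as an unexecuted plan. No argument that ignores $n$ can work here: for $n=3$ one has $x^3+x^2-2=(x-1)(x^2+2x+2)$ and $L=\Q(i)$. So the hypothesis $n>3$ must enter somewhere, and nothing in your reduction uses it.

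Your $3$-adic plan also fails as stated. First, $v_3(f(-1))=v_3(-2)=0$. Second, when $n\not\equiv 2\pmod 3$ we have $f'(1)=2n-1\not\equiv 0\pmod 3$, so the only root of $f$ with residue in $\mathbb{F}_3$ is $1$ itself, a root of $x-1$. Then $g_0$ has no root mod $3$, and \Cref{lem:imprimitivity_finite_obstruction} says nothing about $L$. The Euclidean-norm fallback is unpromising too, since $\lVert g_0\rVert^2=4n-3$ is not small.

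What you are missing is that the $2$-adic obstruction does apply here if you use the different rather than the mere ramified/unramified dichotomy. For a root $\alpha$ of $g_0$ one has $v_2(\alpha)=1/(n-1)$, hence
\[
  v_2\bigl(f'(\alpha)\bigr)=v_2\bigl(n\alpha^{n-1}+(n-1)\alpha^{n-2}\bigr)=1,
\]
because $n$ is odd while $v_2\bigl((n-1)\alpha^{n-2}\bigr)>1$. Since $v_2(\alpha-1)=0$, also $v_2(g_0'(\alpha))=1$, so the different of $L_2=\Q_2(\alpha)$ over $\Q_2$ has valuation at most $1$.

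The different of $\Q_2(i)/\Q_2$ is $(2)$, of valuation $1$. So if $\Q_2(i)\subseteq L_2$, transitivity of differents forces $L_2/\Q_2(i)$ to be unramified. But $L_2/\Q_2$ is totally ramified of degree $n-1$, so $L_2/\Q_2(i)$ is totally ramified of degree $(n-1)/2\geq 2$. This is exactly where $n>3$ is used, and it is how the paper closes this case.
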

\begin{proof}
  If $k$ is odd the result follows from \Cref{lem:no_splitting_irreducibility} with $p = 2$.
  Assume then that $k$ is even and thus that $n$ is odd.
  If $f\left( 1 \right)f\left( -1 \right) \neq 0$ then according to \Cref{lem:no_splitting_irreducibility} $f$ is irreducible over $\Q$ and hence $\Q\left[ x \right]/ \left( f \right)$ is a number field of an odd degree and hence can not contain $\Q\left( i \right)$.
  Assume then that $f\left( 1 \right)f\left( -1 \right) = 0$.
  According to \Cref{lem:no_splitting_irreducibility} $f$ factors as $x \pm 1$ times an irreducible factor which we shall denote by $g$.

  One of two cases occurs:
  Assume first that $f$ has a root $\alpha$ of $2$-adic valuation $0$. 
  Then the $2$-adic valuation of the different ideal of $L := \Q\left[ x \right]/ \left( g \right)$ is bounded by 
  \[
    v_2\left( g'\left( \alpha \right) \right)
    \leq
    v_2\left( f'\left( \alpha \right) \right)
    =
    0.
  \]
  However, the different ideal of $\Q\left( i \right)$ is $\left( 2 \right)$ and hence $\Q\left( i \right)$ can not be embedded into $L$ and hence neither into $\Q\left[ x \right]/ \left( f \right)$.

  Assume now that all of the $n - 1$ roots of $g$ have positive $2$-adic valuation.
  Let $\alpha$ be a root of $g$.
  By considering the Newton polygon of $f$ with respect to $2$ it is clear that $v_2\left( \alpha \right) = \frac{1}{n - 1}$.
  Then again, the $2$-adic valuation of the different ideal of $L$ is bounded by 
  \[
    v_2\left( g'\left( \alpha \right) \right)
    \leq
    v_2\left( f'\left( \alpha \right) \right)
    =
    1.
  \]
  However, the $2$-adic valuation of the different ideal of $\Q\left[ i \right]$ is also $1$.
  Therefore, if we assume towards contradiction that there is an embedding $\Q\left( i \right) \hookrightarrow L$, it follows that $L$ is unramified over $\Q\left[ i \right]$ at $v_2$.
  This is a contradiction to the mentioned fact that $v_2\left( \alpha \right) = \frac{1}{n - 1}$ since by assumption $n - 1 > 2$ which is the ramification degree of $\Q\left[ i \right]$ at $v_2$.
  \end{proof}

\begin{table}
  \centering
  \begin{tabular}{ |Sc|Sc|Sc| } 
    \hline
    $\left| ab \right|$ 
    &
    \makecell{
      Minimal number fields 
      } 
     & 
     \makecell{
      Next root discriminant 
      } \\
    \hline
    $2$ & $\Q\left( \sqrt{-1} \right)$ & $2^{3/2}$ \\ 
    $3$ & $\Q\left( \sqrt{-3} \right)$ & $3^{4/3}$\\ 
    $4$ & $\Q\left( \sqrt{-1} \right), \Q\left( \sqrt{2} \right), \Q\left( \sqrt{-2} \right)$ & $> 5.989$ \\
    $5$ & $\Q\left( \sqrt{5} \right)$ & $> 5.989$ \\ 
  \hline
  \end{tabular}
  \caption{Minimal number fields satisfying the conditions of \Cref{lem:fields_not_embeddable} and the next root discriminant realized by a minimal number field unramified away from $ab$ for each value of $\left| ab \right|$ according to \cite{lmfdb}.}
  \label{table}
  \end{table}

Collecting all the above lemmas we can present the following
\begin{proof}[Proof of \Cref{lem:fields_not_embeddable}]
  The second column of \Cref{table} lists the minimal number fields satisfying the conditions of \Cref{lem:fields_not_embeddable} for each value of $\left| ab \right|$.
  It suffices to rule out the possibility of embedding each of these fields into $\Q\left[ x \right]/ \left( f \right)$, since an embedding of any other field satisfying the conditions of \Cref{lem:fields_not_embeddable} would restrict to an embedding of one of these minimal fields.
  
  The cases $\left( a,b \right) = \left( \pm 2, \pm 1 \right)$ are ruled out by \Cref{lem:Qi_not_embeddable_if_a2_b1} and the cases $\left( a,b \right) = \left( \pm 1, \pm 2 \right)$ by \Cref{lem:Qi_not_embeddable_if_a1_b2}.
  
  Assume then that $\left| ab \right| > 2$.
  We shall first treat all of the cases in which $\left| a \right| = 1$.
  In all of these cases, $f\left( 1 \right)f\left( -1 \right) \neq 0$ and hence $f$ is irreducible over $\Q$ according to \Cref{lem:no_splitting_irreducibility}.
  Therefore, and since all of the fields in \Cref{table} are quadratic, we can assume that $n$ is even and thus that $k$ is odd.
  Therefore, all of these cases can be treated by \Cref{lem:no_splitting_irreducibility} with $p$ being the unique prime factor of $b$.

  The cases $\left( a,b \right) = \left( \pm 2, \pm 2 \right)$ are resolved by \Cref{lem:not_embeddable_Qi_if_a2_b2}.
  In all of the remaining cases, namely $\left( a,b \right) = \left( \pm 3, \pm 1 \right), \left( \pm 4, 1 \right), \left( \pm 5, 1 \right)$, no root of unity can be a root of $f$ and hence, assuming that the non-cyclotomic part of $f$ is irreducible, $f$ must also be irreducible.
  Additionally, $f$ has a real root, and hence, since it is irreducible, we can cross out all of the imaginary fields from \Cref{table}.
  Therefore, the only remaining obstacles are $\left( a,b \right) = \left( \pm 4, 1 \right)$ with $K = \Q\left( \sqrt{2} \right)$ and $\left( a,b \right) = \left( \pm 5, 1 \right)$ with $K = \Q\left( \sqrt{5} \right)$.
  
  We will treat first the case $\left( a,b \right) = \left( \pm 4, 1 \right)$ and $K = \Q\left( \sqrt{2} \right)$.
  If $n$ is odd then $f$ is separable over $\mathbb{F}_2$ and hence $\mathbb{F}_2\left[ x \right]/ \left( f \right) = \mathbb{F}_2 \otimes_\Z \Z\left[ x \right] / \left( f \right)$ is a reduced ring and in particular can not contain $\mathbb{F}_2 \otimes_{\Z} \O_K$.
  However, an embedding of $K$ into $\Q\left[ x \right]/ \left( f \right)$ would induce an embedding of $\mathbb{F}_2 \otimes_{\Z} \O_K$ into $\mathbb{F}_2 \otimes_\Z \Z\left[ x \right] / \left( f \right)$ which is impossible.
  Assume then that $n$ is even and thus that $k$ is odd.
  Then by possibly replacing $f$ with $f\left( -x \right)$ we are free to assume that $a = 4$.
  Finally, this case can be dealt with by \Cref{lem:imprimitivity_finite_obstruction} with $p = 3$ and $m = 1$.

  Lastly, we treat the case $\left( a,b \right) = \left( \pm 5, 1 \right)$ and $K = \Q\left( \sqrt{5} \right)$. If $a = 5$ then we are done by \Cref{lem:imprimitivity_finite_obstruction} with $p = 7$ and $m = 1$, and if $a = -5$ then by \Cref{lem:imprimitivity_finite_obstruction} with $p = 3$ and $m = 1$ our proof is complete.
\end{proof}

\begin{proof}[Proof of \Cref{thm:main}]
  Assume first that $\left| ab \right| = 1$.
  In this case we can use \Cref{thm:strategy} directly to conclude that the Galois group of any irreducible factor of $f$ is symmetric.
  As was shown by Ljunggren \cite{Ljungren1960}, either $f$ is irreducible over $\Q$ or it factors into $x^2 \pm x + 1$ times an irreducible factor, say $g$.
  Therefore, it only remains to show that if $f$ is reducible over $\Q$ then the splitting field of $g$ over $\Q$ is linearly disjoint from the splitting field of $x^2 \pm x + 1$ over $\Q$, i.e. from $\Q\left( \sqrt{-3} \right)$.
  To do that in turn, it suffices to show that $3$ is unramified in $\Q\left[ x \right] / \left( g \right)$, for which it suffices to show that $g$ is separable over $\mathbb{F}_3$.
  However, we have already established in \Cref{cor:inertia_group_acts_as_transposition_external_settings} that $f$ has at most one double root in $\overline{\mathbb{F}_3}$, and hence, since $x^2 \pm x + 1$ has a double root there, $g$ can not have any.

  In the rest of the cases $\left| ab \right|$ is a non trivial power of a prime which we shall denote by $p$ and hence \Cref{lem:fields_not_embeddable} implies that we can apply \Cref{thm:criterion} with $C_p$ being the value recorded in the third column of \Cref{table} for the corresponding value of $\left| ab \right|$.

  If $\left| ab \right| = 2$ then according to \Cref{lem:no_splitting_irreducibility} applied to $K=\Q$ and according to \Cref{lem:factorization_of_f}, $f$ is either irreducible over $\Q$ or it factors into $x \pm 1$ times an irreducible factor.
  In both cases, it suffices to show that the Galois group of the unique non linear irreducible factor of $f$, which we shall denote by $g$, is $S_{\deg \left( g \right)}$.
  However, $\deg g \geq n - 1$ and hence a quick calculation verifies that the inequality in \Cref{thm:criterion} is satisfied for $n > 8$ and hence the Galois group of $g$ is $S_{\deg \left( g \right)}$ as required.

  We claim that in the rest of the cases $f$ is irreducible.
  Indeed, either of the following cases occurs:
  \begin{enumerate}
    \item $f$ is an Eisenstein polynomial.
    \item $\left| a \right| = 1$ and $\left| b \right| = 3,4,5$, in which case it follows from \Cref{lem:no_splitting_irreducibility} that $f$ is irreducible over $\Q$.
    \item $\left| b \right| = 1$ and $\left| a \right| = 3,5$, in which Schinzel and Mikusi{\'n}ski \cite{Schinzel1964} proved that $f$ is irreducible if $n > 2^{18}, 2^{50}$ respectively.
    \item $\left( a,b \right) = \left( \pm 4, 1 \right)$ in which case Schinzel \cite{schinzel_1966} proved that $f$ is irreducible for sufficiently large $n$.
  \end{enumerate} 
  Therefore, in all of the remaining cases, we can evaluate the inequality in \Cref{thm:criterion} with $\deg g = n$, and a quick calculation shows that it is satisfied for $n > 8$ and thus the Galois group of $f$ is $S_n$ and our proof is complete.
\end{proof}

  \bibliographystyle{alpha}
  \bibliography{refs}

\end{document}